\title{Asymptotic flocking dynamics of Relativistic-Cucker-Smale particles immersed in incompressible Navier-Stokes equations}
\author{Shenglun Yan\, and \, Weiyuan Zou\footnote{E-mail: zwy@amss.ac.cn(W.-Y. Zou)}}
\numberwithin{equation}{section} 
\begin{document}
	\theoremstyle{definition}\newtheorem{thm}{Theorem}[section]
	\theoremstyle{definition}\newtheorem{rem}{Remark}[section]
	\theoremstyle{definition}\newtheorem{lem}{Lemma}[section]
	\theoremstyle{definition}\newtheorem{df}{Definition}[section]
	\theoremstyle{definition}\newtheorem{nt}{Notation}[section]
	\theoremstyle{definition}\newtheorem{prop}{Proposition}[section]
	\theoremstyle{remark}\newtheorem*{prof1}{Proof of Theorem 1.1}
	\maketitle
	\emph{College of Mathematics and Physics,
		Beijing University of Chemical Technology, Beijing 100029, P. R. China} 
	\begin{abstract}
		In this paper, we propose a coupled system describing the interaction between the Relativistic Cucker-Smale model and the incompressible Navier-Stokes equations via a drag force, and establish a global existence theory as well as the time-asymptotic behavior of the proposed model in $\mathbb{T}^3$. It is shown that the coupled system exhibits an exponential alignment under some specific assumptions, and that weak solutions exist globally for general initial data.
	\end{abstract}
	\textbf{Key Words.} Relativistic-Cucker-Smale model, incompressible Navier-Stokes equation, asymptotic stability, weak solution.
	\section{\label{sec:level1}Introduction}
	
	Flocking in complex systems is a common phenomenon in nature. For example, animal migration\cite{ref1}, bacterial movement\cite{ref2}, and synchronization between cells\cite{ref3} and fireflies\cite{ref4} exhibit certain clustering effects. The study of cluster effects is also of great significance. For example, we can apply the theory of cluster effects to modern fields such as sensor networks, robot control, and unmanned flight\cite{ref5}. In this paper, we consider a coupled system that describes the interaction between a large number of particles and an incompressible fluid through frictional force, namely, Relativistic-Cucker-Smale-Navier-Stokes equations (in short, RCS-NS equations).  Assuming $x\in\mathbb{T}^3$ and $w\in\mathbb{R}^3$ represent the position and relativistic velocity of particles, $f (t, x, w)$ represents the distribution function of particles, $u\in\mathbb{R}^3$ represents the velocity of fluids, and $p\in\mathbb{R}$ represents pressure, then the RCS-NS equations have the following form:
	\begin{equation}\label{eq1}
		\small{	\left\{
			\begin{aligned}
				&\partial_t f+\hat{v}(w) \cdot \nabla_x f+\nabla_w \cdot (F[f,u]f)=0,\\
				&\partial_t u+(u\cdot\nabla_x)u+\nabla_x p-\mu \Delta_x u=-\int_{\mathbb{R}^3}F_d[u]f \mathrm{d}w,\\
				&\nabla_x\cdot u=0, (t,x,w)\in \mathbb{R}_+\times\mathbb{T}^3\times\mathbb{R}^3,\quad F[f,u]:=L[f]+F_d.
			\end{aligned}
			\right.}
	\end{equation}
	subject to initial data:
	$$(f(0,x,w),u(0,x))=(f^{\text{in}}(x,w),u^{\text{in}}(x)),\quad (x,w)\in \mathbb{T}^3\times\mathbb{R}^3,$$
	and the periodic boundary conditions on $\mathbb{T}^3=\mathbb{R}^3/\mathbb{Z}^3=[0,1]^3$:
	$$u^{\text{in}}|_{\Omega_j}=u^{\text{in}}|_{\Omega_{j+3}},\;\nabla_xu^{\text{in}}|_{\Omega_j}=\nabla_xu^{\text{in}}|_{\Omega_{j+3}},\;f^{\text{in}}|_{\Omega_j}=f^{\text{in}}|_{\Omega_{j+3}},$$
	with $\Omega_j:=\partial\mathbb{T}^3\cap\{x_j=0\}$, and $\Omega_{j+3}:=\partial\mathbb{T}^3\cap\{x_j=1\}(j=1,2,3)$. Here, the relativistic velocity $w$ is defined by mapping $\hat{w}:\ B_c\left( 0 \right) \rightarrow \mathbb{R}^3$:
	$$w=\hat{w}\left( v \right) =\frac{v}{\sqrt{1-\frac{|v|^2}{c^2}}}+\frac{v}{c^2-|v|^2}.$$
	The meaning of $v$ here is the classical speed of motion of an object, and it is generally believed that $|v|<c$, where $c$ is the speed of light. It can be proved that the mapping $\hat{w}$ is reversible. We denote the inverse mapping as $\hat{v}$. The definitions of other functionals in the system of equations are as follows:
	$$L\left[ f \right] \left( t,x,w \right) :=-\int_{\mathbb{T}^3\times \mathbb{R}^3}{\phi}\left( |x-x_*| \right) \left( \hat{v}\left( w \right) -\hat{v}\left( w_* \right) \right) f\left( t,x_*,w_* \right) \mathrm{d}x_*\mathrm{d}w_*,$$
	$$F_d\left[ u \right] \left( t,x,w \right) :=u\left( t,x \right) -w\text{, and }F\left[ f,u \right] :=L\left[ f \right] +F_d.$$
	Here $L[f]$ and $F_d$ represent velocity alignment forces, and drag force per unit mass, respectively. The communication weight of the particles $\phi$ is non-decreasing in $\mathbb{R}_+$ and belongs in $W^{1,\infty}(\mathbb{R}; \mathbb{R}_+).$ 
	
	There is extensive literature on flocking in mathematical theory. In 2007, Cucker and Smale established the Cucker-Smale model based on Newton's laws by abstracting birds into particle models to address the aggregation phenomenon observed during bird migration. This is an important ordinary differential equation model for studying cluster effects\cite{ref6}. However, the Cucker-Smale model also has its own limitations. The reason is that the Cucker-Smale model focuses mainly on the interaction mechanisms between individuals within a group, but ignores the significant impact of external environmental factors such as light sources \cite{ref7,ref8} on cluster behavior. In 2017, Professor Seung-Yeal Ha's team derived a thermodynamic Cucker-Smale model considering temperature effects based on the original Cucker-Smale model in \cite{ref6}, by using gas kinetics theory. Reference \cite{ref9} reveals the stability of the thermodynamic Cucker-Smale model under initial conditions, as well as its exponential stability characteristics at temperatures far above the system's kinetic energy, providing strong theoretical support for understanding the thermodynamic behavior in complex cluster systems.
	
	As is well known, there are a large number of microscopic particles and macroscopic matter moving at speeds close to the speed of light in the universe, such as stars, nebulae, and other celestial bodies. When we study the motion of matter in these universes, we generally take into account the effects of relativity, since Newton's laws can only describe the motion of matter at macroscopic low speeds well. In 2020, Professor Ha and his team derived the relativistic Euler equations considering relativistic effects based on the theory of relativistic gas dynamics, as well as the relativistic thermodynamic Cucker-Smale model, a variant of the thermodynamic Cucker-Smale model\cite{ref10}. 
	
	Assuming $x_i,\ v_i\in \mathbb{R}^d$ and $T_i \in \mathbb{R}_+$ represent the position, velocity, and temperature of the $i$-th particle, $\phi$ and $\zeta$ are non negative uni-variate function used to characterize the communication weights between different particles, $c$ represents the speed of light, $t$ represents time, and $\displaystyle{\Gamma _i:=\frac{1}{\sqrt{1-\frac{|v_i|^2}{c^2}}}}$ is the Lorentz factor, then based on the original symbol system, the relativistic thermodynamic Cucker-Smale model takes the following form:
	\begin{equation}
		\small{	\left\{
			\begin{aligned}
				&\displaystyle{\frac{\mathrm{d}x_i}{\mathrm{d}t}=v_i,\quad i\in \left[ N \right] :=\left\{ 1,2,...,N \right\} ,\quad t>0,}\\
				&\displaystyle{\frac{\mathrm{d}}{\mathrm{d}t}\left( \Gamma _iv_i\left( 1+\frac{T_i}{c^2} \right) \right) =\frac{1}{N}\sum_{j=1}^n{\phi}\left( |x_i-x_j| \right) \left( \frac{v_j\Gamma _j}{T_j}-\frac{v_i\Gamma _i}{T_i} \right),}\\
				&\displaystyle{\frac{\mathrm{d}}{\mathrm{d}t}\left( T_i\Gamma _i+c^2\left( \Gamma _i-1 \right) \right) =\frac{1}{N}\sum_{j=1}^n{\zeta}\left( |x_i-x_j| \right) \left( \frac{\Gamma _i}{T_i}-\frac{\Gamma _j}{T_j} \right).}
			\end{aligned}
			\right.}
	\end{equation}
	The author of Reference \cite{ref10} also demonstrated the asymptotic stability of the Cucker-Smale model in relativistic thermodynamics.
	
	If the relativistic temperature of a particle $\displaystyle{\frac{T_i}{\Gamma _i}=:T^*}$ is set to a constant value and taken as 1, a more trivial relativistic Cucker-Smale model can be immediately obtained:
	\begin{equation}
		\small{	\left\{
			\begin{aligned}
				&\displaystyle{\frac{\mathrm{d}x_i}{\mathrm{d}t}=v_i,\quad i\in \left[ N \right] :=\left\{ 1,2,...,N \right\} ,\quad t>0,}\\
				&\displaystyle{\frac{\mathrm{d}}{\mathrm{d}t}\left( \Gamma _iv_i\left( 1+\frac{\Gamma _i}{c^2} \right) \right) =\frac{1}{N}\sum_{j=1}^n{\phi}\left( |x_i-x_j| \right) \left( v_j-v_i \right).}
			\end{aligned}
			\right.}
	\end{equation}
	
	The Cucker-Smale model of relativity has attracted deeper research due to its ability to more rigorously characterize the clustering effects of microscopic particles or high-speed moving particles in physics. Reference \cite{ref11} first provides a more general expression of the relativistic Cucker-Smale model. Firstly, the author introduces the relativistic velocity $w$, where the relativistic velocity $w_i$ of the $i$-th particle is defined by the mapping $\hat{w}:\ B_c\left( 0 \right) \rightarrow \mathbb{R}^d$:
	$$
	w_i=\hat{w}\left( v_i \right) =\frac{v_i}{\sqrt{1-\frac{|v_i|^2}{c^2}}}+\frac{v_i}{c^2-|v_i|^2}.
	$$
	It can be proven that the mapping $\hat{w}$ is reversible and its inverse mapping is denoted as $\hat{v}$. Reference \cite{ref11} also studied the asymptotic stability and mean-field limit of the relativistic Cucker-Smale model at distance.
	
	Considering that many forces in nature (such as universal gravitation and Coulomb force) are in inverse square form, i.e. $\displaystyle{\frac{k}{r^2}}$ (here $k$ is constant and $r$ denotes the distance between two objects), assuming that $
	\phi \left( r \right) $ is in the form of a singular kernel, like $\displaystyle{\phi(r):=\frac{1}{r^{\beta}}\left( \beta >0 \right)}$, also has significant importance. However, because singular kernels usually do not possess excellent properties such as boundedness and integrability on the entire positive real axis, the study of singular kernels is much more difficult compared to ordinary alternating weights. In \cite{ref12}, the author proved that under specific initial conditions, the relativistic Cucker-Smale model under the action of a singular kernel exhibits asymptotic stability when the exponent of singular kernel is present, that is, it physically exhibits a clustering effect.
	
	In practice, there is often a delay between the reception of information signals and the transmission of control signals. In this case, we need to incorporate the delay as a parameter into our system analysis process. In \cite{ref13}, the author proposed the delayed relativistic Cucker-Smale model and analyzed its asymptotic stability. 
	
	The fluid-particle coupling model can be used to simulate the movement process of microscopic particles dispersed in macro fluid, such as sedimentation phenomenon analysis, combustion theory, and spray modeling \cite{ref14,ref15,ref16,ref17,ref18,ref19,ref20,ref21}, and has wide applications in many modern industrial fields, such as biotechnology, medical science, mineral processing, rocket propulsion, and sewage treatment \cite{ref19,ref20,ref21,ref22,ref23,ref24,ref25}. When aggregated particles are surrounded by viscous fluid, they will generate interaction forces, such as friction \cite{ref26,ref27}.
	
	Regarding Cucker-Smale type systems, we typically consider their coupling structures with isentropic incompressible fluids and isentropic compressible fluids. Reference \cite{ref28} formally derived the coupling structure between the kinetic Cucker-Smale model and the incompressible Navier-Stokes equation and proved that the smooth solution of the Cucker-Smale-Navier-Stokes system exhibits exponential stability under certain conditions on the periodic region $\mathbb{T}^d:=\mathbb{R}^d / \mathbb{Z}^d$. Drawing on the idea of \cite{ref29}, it has been proved that the existence of weak solutions can be guaranteed when the initial value satisfies certain conditions when $d=3$. Reference \cite{ref30} proved the existence of strong solutions for the Cucker-Smale-Navier-Stokes system in $\mathbb{T}^3$.
	
	In \cite{ref31}, the author proved the asymptotic stability of the thermodynamic Cucker-Smale-Navier-Stokes system in $\mathbb{T}^3$, as well as the existence of some strong and weak solutions. In \cite{ref32}, the author studied the existence, large-time behavior, and non-relativistic limits of the RCS model.
	
	It should be noted that according to our knowledge, there is currently no research related to the relativistic Cucker-Smale-Navier-Stokes model. However, given the large number of research cases on the coupling structure of the Cucker-Smale model and its variants with incompressible Navier-Stokes equations, it is highly feasible to derive and analyze the relativistic Cucker-Smale-Navier-Stokes equations. The potential application background of this model is to characterize the propagation behavior dynamics of bio-electric signals in neural cell fluids\cite{ref33}.
	
	In this paper, we will investigate the large-time behavior and the existence of weak solutions of RCS-NS system. For the large-time behavior of RCS-NS system, we construct Lyapunov functionals and estimate its dissipative structure, obtain the corresponding $Gr\ddot{o}nwall's$ inequality, and ultimately obtain the conclusion of exponential convergence. For the existence of weak solutions: we first regularize the original system of equations; then we construct a Cauchy iteration sequence to prove the existence of approximate system; Finally, by using methods such as local extension, we obtain the approximate solution can converge to a weak solution.
	
	\begin{nt}
		For functions $f=f(x,w)$ and $u=u(x)$, $\|f\|_{L^p}$ and $\|u\|_{L^p}$ denote the usual $L^p(\mathbb{T}^3\times\mathbb{R}^3)$-norm and $L^p(\mathbb{T}^3)$-norm, respectively. We also denote by $C$ a generic positive constant. For the vector $u\in\mathbb{R}^n$, we denote $u^i$ as its $i$-th component. For two matrices $A=(a_{ij})_{n\times n}$ and $B=(b_{ij})_{n\times n}$, we define the inner product of the matrix as $A:B=\sum_{i,j}a_{ij}b_{ij}$. For simplicity, we often drop the $x$-dependence of differential operators $\nabla_x$ and $\Delta_x$, that is, $\nabla f:=\nabla_xf$ and $\Delta f:=\Delta_xf.$ For any nonnegative integer $k$, $H^k$ denote the $k$-th order $L^2$ Sobolev space. $\mathcal{C}^k([0,T];E)$ is the set of $k$-times continuously differentiable functions from an interval $[0,T]\subset\mathbb{R}$ into a Banach space $E$, and $L^p(0,T;E)$ is the set of $L^p$ functions from an interval $(0,T)$ to a Banach space $E$. $\nabla^k$ denotes any partial derivative $\partial_\alpha$ with multi-index $\alpha,|\alpha|=k.$
	\end{nt}
	
	For the asymptotic stability of \eqref{eq1}, we define the average momentum of relativistic Cucker-Smale particle and the incompressible fluid, respectively,
	$$ w_c(t):=\frac{\int_{\mathbb{T}^3\times\mathbb{R}^3}w f \mathrm{d}x\mathrm{d}w}{\int_{\mathbb{T}^3\times\mathbb{R}^3}f \mathrm{d}x\mathrm{d}w},\quad u_c(t)=\int_{\mathbb{T}^3}u\mathrm{d}x,$$
	and the compact support of $f(t,x,w)$ onto relativistic velocity spaces, as following,
	$$\Omega_{w}(f,t):=\overline{\{w:\exists x \in\mathbb{T}^3\text{ such that }f(t,x,w)\neq0\}}.$$
	Since the control equation of $f(t,x,w)$ is a quasi-linear PDE, the propagation speed of its characteristic lines is limited. That is, if $\Omega_{w}(f,0)$ is bounded, $\Omega_{w}(f,t)$ is also bounded at any moment $t \in \mathbb{R}_+$.
	
	We then define the following Lyapunov function,
	$$\begin{aligned}
		\mathcal{L}(t)&:=\mathcal{E}^w(t)+\mathcal{E}^u(t)+\mathcal{E}^r(t)\nonumber\\
		&=\int_{\mathbb{T}^3 \times \mathbb{R}^3}|w-w_c(t)|^2f(t,x,w)\mathrm{d}x\mathrm{d}w+\int_{\mathbb{T}^3}|u(t,x)-u_c(t)|^2\mathrm{d}x+\frac{1}{2}|u_c-w_c|^2.\nonumber
	\end{aligned}$$
	By the way, Reference \cite{ref10} point out the local energy function of particle $E:B_c(0) \rightarrow \mathbb{R}_+$ as
	$$E(v)=c^2(\Gamma(v)-1)+(\Gamma^2(v)-\log \Gamma(v)).$$
	
	The asymptotic stability of the RCS-NS system is expressed as the following theorem.
	\begin{thm}[Asymptotic stability of RCS-NS system]\label{thm1.1}
		Let $[f (t, x, w),u(t,x)]$ be a smooth solution to \eqref{eq1} satisfying
		
		(1)\quad $\displaystyle{\int_{\mathbb{T}^3 \times \mathbb{R}^3}f^{\text{in}}(x,w)\mathrm{d}x\mathrm{d}w=1,\quad \bigcup\limits_{t\geq 0} \Omega_w (f,t)\subset B_{W_0}(0)\quad \text{for some} \quad W_0>0}$,
		
		(2)\quad $\displaystyle{||\rho_f||_{L^\infty\left(\mathbb{R}_+\times \mathbb{T}^3\right)}<\infty,\quad \text{where} \quad \rho_f(t,x):=\int_{\mathbb{R}^3}f(t,x,w)\mathrm{d}w}$, and $f(t,x,w)$ decays to zero sufficiently fast in the $w$-variable. Then, we have
		\begin{equation}
			\mathcal{L}(t)\leq \mathcal{L}(0)e^{-Ct},
		\end{equation}
		where $C:=C(\rho_f,\mu)$ is a positive constant independent of $t$.
	\end{thm}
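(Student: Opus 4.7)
The plan is to differentiate $\mathcal{L}(t)=\mathcal{E}^w+\mathcal{E}^u+\mathcal{E}^r$ along the flow and close a differential inequality of the form $\mathcal{L}'(t)\le-C\mathcal{L}(t)$, after which Gr\"onwall's lemma yields the claimed exponential decay. I would begin with the ODEs satisfied by the averages: integrating the kinetic equation against $w$ and the Navier--Stokes equation over $\mathbb{T}^3$, the convection, pressure and viscous pieces vanish by periodicity and incompressibility, and the alignment kernel drops by antisymmetry in $(w,w_*)$, producing $\dot{w}_c=m_u-w_c=-\dot{u}_c$, where $m_u(t):=\int_{\mathbb{T}^3}u\rho_f\,\mathrm{d}x$. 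In particular total momentum is conserved, and a short computation gives $\dot{\mathcal{E}}^r=-4\mathcal{E}^r-2(u_c-w_c)(m_u-u_c)$.

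Next, $\dot{\mathcal{E}}^w$ is obtained by testing the kinetic equation against $|w-w_c|^2$: the transport term drops by periodicity, and integration by parts in $w$ against $F[f,u]=L[f]+(u-w)$ produces (i) the symmetrized alignment bilinear $-\iint\phi(|x-x_*|)(\hat{v}(w)-\hat{v}(w_*))\cdot(w-w_*)ff_*\,\mathrm{d}x\mathrm{d}w\mathrm{d}x_*\mathrm{d}w_*$, which is non-positive by the monotonicity of $\hat{v}$, and (ii) a drag contribution which collapses, via $\int(w-w_c)f\,\mathrm{d}x\mathrm{d}w=0$, to $-2\mathcal{E}^w+2\int(w-w_c)(u-u_c)f\,\mathrm{d}x\mathrm{d}w$. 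A standard $L^2$ estimate on the Navier--Stokes equation, combined with the ODE for $u_c$, then yields $\dot{\mathcal{E}}^u=-2\mu\|\nabla u\|_{L^2}^2-2\int|u-u_c|^2\rho_f\,\mathrm{d}x+2\int(u-u_c)(w-w_c)f\,\mathrm{d}x\mathrm{d}w-2(u_c-w_c)(m_u-u_c)$.

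Summing the three identities reduces the task to absorbing the cross terms $4\int(u-u_c)(w-w_c)f\,\mathrm{d}x\mathrm{d}w$ and $-4(u_c-w_c)(m_u-u_c)$ into the available dissipations $-2\mathcal{E}^w$, $-2\int|u-u_c|^2\rho_f\,\mathrm{d}x$, $-2\mu\|\nabla u\|_{L^2}^2$ and $-4\mathcal{E}^r$. For the first cross term I would apply a weighted Cauchy--Schwarz splitting it between $\int|u-u_c|^2\rho_f\,\mathrm{d}x$ and $\mathcal{E}^w$ so that a strictly negative residue remains in each; for the second, the pointwise estimate $|m_u-u_c|^2\le\|\rho_f\|_{L^\infty}\int|u-u_c|^2\,\mathrm{d}x\le\|\rho_f\|_{L^\infty}C_P\|\nabla u\|_{L^2}^2$, obtained by Cauchy--Schwarz against $\rho_f\,\mathrm{d}x$ followed by the Poincar\'e inequality on the zero-mean field $u-u_c$, permits absorption into slices of the viscous and $\mathcal{E}^r$ dissipations. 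The main obstacle is the calibration of these weights: because $\|\rho_f\|_{L^\infty}$ is only assumed finite, the absorption into the viscous piece imposes a quantitative balance between $\mu$, $\|\rho_f\|_{L^\infty}$ and $C_P$, and this balance is precisely what produces the positive constant $C(\rho_f,\mu)$ in the statement. A final use of Poincar\'e converts the residual $-\|\nabla u\|_{L^2}^2$ into $-\mathcal{E}^u$, giving $\dot{\mathcal{L}}\le-C\mathcal{L}$ and the exponential bound via Gr\"onwall.
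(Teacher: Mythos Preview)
Your overall architecture (differentiate $\mathcal{L}$, identify a dissipation, compare $\mathcal{L}$ to the dissipation, Gr\"onwall) matches the paper's, and your computations of $\dot{\mathcal E}^w$, $\dot{\mathcal E}^u$, $\dot{\mathcal E}^r$ are correct. The gap is in how you handle the cross terms. After your expansion you must absorb
\[
A=4\int_{\mathbb{T}^3\times\mathbb{R}^3}(w-w_c)\cdot(u-u_c)\,f\,\mathrm{d}x\mathrm{d}w,\qquad B=-4(u_c-w_c)\cdot(m_u-u_c)
\]
into $-2\mathcal{E}^w$, $-2\int|u-u_c|^2\rho_f$, $-4\mathcal{E}^r$ and $-2\mu\|\nabla u\|_{L^2}^2$. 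For $A$, weighted Cauchy--Schwarz gives $|A|\le 2\alpha\,\mathcal{E}^w+\tfrac{2}{\alpha}\int|u-u_c|^2\rho_f$; a strictly negative residue in \emph{both} of $-2\mathcal{E}^w$ and $-2\int|u-u_c|^2\rho_f$ would require $\alpha<1$ and $\alpha>1$ simultaneously, so your stated splitting is impossible. If instead you spill the excess into the viscous term via $\int|u-u_c|^2\rho_f\le \|\rho_f\|_{L^\infty}C_P\|\nabla u\|_{L^2}^2$ and do the same for $B$, a short optimisation shows you are forced into a condition of the type $\|\rho_f\|_{L^\infty}C_P<\mu$, which is \emph{not} assumed in the theorem. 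So the ``calibration of weights'' you flag is not merely what fixes the constant $C(\rho_f,\mu)$; it is a genuine constraint that your scheme cannot meet in general.

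The paper avoids this by never expanding the drag pieces: it keeps $\dot{\mathcal E}^w$, $\dot{\mathcal E}^u$, $\dot{\mathcal E}^r$ in the form $2\int(w-w_c)\cdot(u-w)f$, $-2\int(u-u_c)\cdot(u-w)f$, $-2\int(u_c-w_c)\cdot(u-w)f$ and adds them. The three integrands combine \emph{exactly} via $(w-w_c)-(u-u_c)-(u_c-w_c)=-(u-w)$ to give the clean dissipation $-2\int|u-w|^2f$, with no cross terms left to absorb. One then shows $\tfrac12\mathcal{L}\le C(\rho_f,\mu)\big(\mu\|\nabla u\|_{L^2}^2+\int|u-w|^2f\big)$ by the elementary inequality $\tfrac12|u_c-w_c|^2+\tfrac12\mathcal{E}^w\le \int|u-u_c|^2\rho_f+\int|u-w|^2f$ together with Poincar\'e; this step is where $\|\rho_f\|_{L^\infty}$ and $\mu$ legitimately enter the constant. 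In short: do not split $u-w$ before summing; your identities already contain the exact cancellation, and recognising it removes the spurious smallness requirement.
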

	Theorem \ref{thm1.1} tells us that the Lyapunov function $\mathcal{L}(t)$ will decay to zero as $t \rightarrow\infty$ under some specific initial data. This implies that the velocity of particles and fluid will tend to be consistent. In fact, we have
	$$u_c(t),w_c(t)\to\frac12\left(w_c(0)+u_c(0)\right)\quad\mathrm{~as~}\quad t\to\infty.$$
	from the conservation of the total momentum.
	\begin{rem}
		The argument used for the proof of Theorem \ref{thm1.1} can be easily extended to the $d$-dimensional case with $d\geq3$. For other Cucker-Smale type equations coupled with incompressible Navier-Stokes equations, similar estimates can also be seen in \cite{ref28,ref30,ref31}.
	\end{rem}
	\begin{rem}
		From \cite{ref31}, we know that if $\rho_f \in L^\infty(\mathbb{R}_+;L^{3/2}(\mathbb{T}^3))$, similar estimates can be reached by using $H\ddot{o}der$ inequality and the Gagliardo-Nirenberg-Sobolev inequality. Moreover, if the dimension of space $d\geq3$, we just change the integrability condition of $\rho_f$ in Theorem \ref{thm1.1} into $\rho_f \in L^\infty(\mathbb{R}_+;L^{d/2}(\mathbb{T}^3))$.
	\end{rem}
	\begin{rem}
		The global uniform-in-time bound assumption of $\Omega_w(f,t)$ can be replaced by the bounded condition of $u\in L^\infty(\mathbb{R}_+\times\mathbb{T}^3)$. Under the above assumptions, we can show that the estimate of $M_2f$ in Lemma $\ref{lem4.1}$ is indeed independent of time, and applying this time-independent estimate in the estimates in Lemma $\ref{lem4.2}$, we obtain the global uniform in time bound of $\Omega_w(f,t)$.
	\end{rem}
	
	The following theorem provides a condition for the existence of weak solutions without the assumption of small initial values.
	\begin{thm}\label{thm1.2}
		Let $T>0$ be given, and suppose the initial data $[f^\mathrm{in}, u^\mathrm{in}]$ satisfy the following conditions:
		
		$$f^{\mathrm{in}}\in L^\infty(\mathbb{T}^3\times\mathbb{R}^3),\quad\int_{\mathbb{R}^3}|w|^2f^{\mathrm{in}}(x,w)\mathrm{d}w\in L^\infty(\mathbb{T}^3),\quad u^{\mathrm{in}}\in L^2(\mathbb{T}^3).$$
		Then there exists at least one weak solution in the sense of Definition \ref{df4.1} satisfying the following estimates:
		$$\begin{aligned}&(1)\quad\|f\|_{L^\infty((0,T)\times\mathbb{T}^3\times\mathbb{R}^3)}\leq\mathrm{e}^{CT}\|f^{\mathrm{in}}\|_{L^\infty(\mathbb{T}^3\times\mathbb{R}^3)},\\&(2)\quad\frac{1}{2}\int_{\mathbb{T}^3\times\mathbb{R}^3}|w|^2 f(t,x,w) \mathrm{d}x\mathrm{d}w+\frac12\|u(t)\|_{L^2(\mathbb{T}^3)}^2+\int_0^t\|\nabla u(s)\|_{L^2(\mathbb{T}^3)}^2\mathrm{d}s+\int_0^t\int_{\mathbb{T}^3\times\mathbb{R}^3}|u-w|^2f(s,x,w)\mathrm{d}x\mathrm{d}w \mathrm{d}s\\&\quad\leq \frac{1}{2}\int_{\mathbb{T}^3\times\mathbb{R}^3}|w|^2f^{\mathrm{in}}(x,w) \mathrm{d}x\mathrm{d}w+\frac12\|u^{\mathrm{in}}\|_{L^2(\mathbb{T}^3)}^2.\end{aligned}$$
	\end{thm}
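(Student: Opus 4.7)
The plan is to construct weak solutions by a regularization-approximation-compactness procedure, adapted from the Boudin-Desvillettes-Grenier treatment of Vlasov-Navier-Stokes in \cite{ref29} and its Cucker-Smale extensions in \cite{ref28,ref31}. First I would regularize by introducing a smooth cutoff $\chi_R\in C_c^\infty(\mathbb{R}^3)$ equal to $1$ on $B_R(0)$, replacing $w$ by $\chi_R(w)w$ in the drag and alignment forces, and convolving $u$ with a spatial mollifier $\eta_\varepsilon$ in the Vlasov transport coefficient. The resulting force field $F_{R,\varepsilon}[f,u]$ is Lipschitz in $w$ with divergence bounded in absolute value by a constant depending only on $\|\phi\|_{W^{1,\infty}}$, $R$, $\varepsilon$, and $\|f\|_{L^1}$, which guarantees well-posedness of the linearized subproblems.

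Next I would build an approximating sequence by a Picard iteration: given $(f^n,u^n)$, let $f^{n+1}$ be the unique classical solution of the linear Vlasov equation with coefficient $F_{R,\varepsilon}[f^n,\eta_\varepsilon\ast u^n]$, obtained by the method of characteristics, and let $u^{n+1}$ be the Leray-Hopf solution of the Navier-Stokes equation with forcing $-\int_{\mathbb{R}^3}(u^{n+1}-\chi_R(w)w)f^{n+1}\mathrm{d}w$, treating the drag on $u^{n+1}$ implicitly so that the added linear damping is monotone. The key uniform estimates are: (i) the pointwise bound $\|f^{n+1}(t)\|_{L^\infty}\le e^{Ct}\|f^{\mathrm{in}}\|_{L^\infty}$, obtained by integrating along characteristics and invoking the divergence bound on $F_{R,\varepsilon}$ together with mass conservation; (ii) the joint energy identity obtained by testing the Vlasov equation against $|w|^2/2$ and the momentum equation against $u$, in which the transport term vanishes by periodicity, the alignment contribution $\iint\phi(|x-x_*|)w\cdot(\hat v(w)-\hat v(w_*))ff_*\mathrm{d}x\mathrm{d}x_*\mathrm{d}w\mathrm{d}w_*$ becomes nonpositive after symmetrization thanks to the monotonicity of $\hat v$, and the cross-terms between the NS drag forcing and the Vlasov drag combine to produce the nonnegative dissipation $\int|u-w|^2f\mathrm{d}x\mathrm{d}w$; (iii) a uniform bound on $M_2f^n(t,x):=\int|w|^2f^n\mathrm{d}w$ in $L^\infty_tL^\infty_x$ of the type obtained in Lemma \ref{lem4.1}, which uses the hypothesis $M_2f^{\mathrm{in}}\in L^\infty(\mathbb{T}^3)$.

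With these bounds at hand, the compactness step extracts subsequences such that $u^n\rightharpoonup u$ weakly in $L^2(0,T;H^1)$ and, by Aubin-Lions applied to the Navier-Stokes equation (the time derivative is controlled by the equation itself), strongly in $L^2((0,T)\times\mathbb{T}^3)$; $f^n\overset{*}{\rightharpoonup}f$ in $L^\infty$; and the moments $\rho_{f^n}$, $\int wf^n\mathrm{d}w$, $\int|w|^2f^n\mathrm{d}w$ converge strongly in suitable $L^p$-spaces by a velocity averaging lemma combined with the $w$-tightness provided by the $M_2$-bound. These are enough to identify the limits in the nonlinear drag source $\int(u-w)f\mathrm{d}w$, in the alignment operator $L[f]f$, and in the convective term $(u\cdot\nabla)u$ in the distributional formulation. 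A second diagonal limit $R\to\infty$, $\varepsilon\to 0$ combined with lower semicontinuity of $L^p$ and Sobolev norms produces the weak solution in the sense of Definition \ref{df4.1}, yielding the energy inequality (2); estimate (1) is preserved because the $L^\infty$-norm is weak-$*$ lower semicontinuous.

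The main obstacle is the passage to the limit in the drag coupling $\int_{\mathbb{R}^3}(u-w)f\,\mathrm{d}w$, which appears both as a source in Navier-Stokes and inside the Vlasov transport field: it pairs weak convergence of $f^n$ with a $w$-unbounded weight, so one must upgrade uniform integrability of $\int|w|f^n\mathrm{d}w$ via the $M_2$-bound and combine this with the strong $L^2$-compactness of $u^n$ and velocity averaging for $\int wf^n\mathrm{d}w$ before the product can be identified in the limit. A secondary difficulty is that the Picard iteration is only contractive on a short time interval; the corresponding local-in-time approximate solution must then be extended globally by reinjecting the uniform energy estimate, which is precisely the \emph{local extension} step alluded to in the introduction.
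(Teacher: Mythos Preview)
Your proposal is essentially correct and follows the same overall architecture as the paper: regularize the system (mollified convection velocity, velocity cutoff in the drag, smooth compactly supported initial data), solve the regularized problem by an iteration, pass to the limit in the regularization parameter to obtain a local-in-time weak solution, and extend globally via the energy inequality. The energy dissipation structure you identify---symmetrizing the alignment term and using the monotonicity $(w-w_*)\cdot(\hat v(w)-\hat v(w_*))\ge 0$---is exactly what the paper exploits (Lemma~\ref{lem4.1}, Lemma~\ref{lem4.9}).

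The one substantive methodological difference is how you close the iteration in $n$. You propose a compactness argument: weak-$*$ for $f^n$, Aubin--Lions for $u^n$, velocity averaging for the moments, and then identification of the nonlinear products in the limit. The paper instead shows directly that $(f^n,u^n)$ is a \emph{Cauchy sequence} in $L^\infty((0,T)\times\mathbb{T}^3\times\mathbb{R}^3)\times L^\infty(0,T;L^2(\mathbb{T}^3))$ (Lemma~\ref{lem4.6}), by estimating differences of characteristic trajectories and applying the discrete Gr\"onwall-type inequality of Proposition~\ref{prop2.1}. This gives the paper strong convergence on the full interval $[0,T]$ without any velocity averaging, so the identification of the nonlinear limits at this stage is immediate; your compactness route is the more standard Vlasov--Navier--Stokes strategy and would also work, at the cost of the extra averaging/tightness machinery you describe. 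A related minor correction: in the paper the short-time restriction $T^*$ does not arise from the Picard iteration (which converges globally for fixed $\varepsilon$) but only at the $\varepsilon\to 0$ step (Lemma~\ref{lem4.8}), where the $\varepsilon$-uniform bounds on $M_2f_\varepsilon$ and $\|u_\varepsilon\|_{L^2}$ are coupled and can initially be closed only on a short interval; the extension to $[0,T]$ then proceeds via Lemma~\ref{lem4.9} exactly as you anticipate.
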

	Next, we provide some comments on \ref{thm1.2}.
	\begin{rem}
		Since the relativistic Cucker-Smale equation is also a Vlasov-type equation, this theorem is very similar to other Vlasov-type equations in \cite{ref29,ref30,ref31}. Especially, we draw many inspirations and the method from proving the existence of weak solutions for the thermodynamic Cucker-Smale model coupled with incompressible viscous fluids. 
	\end{rem}

	\section{Preliminaries}
	In this section, we first construct the RCS-NS model using friction theory. Then, we will display some known results. These results will play an important role in the research of the RCS-NS model. Finally, we will give the conservation laws and energy dissipation of RCS-NS model.
	\subsection{Construction of RCS-NS model}
	We again define the map $\hat{w}:B_c(0)\to\mathbb{R}^d$ by
	$$\hat{w}(v):=F(v)v=\Gamma(v)\biggl(1+\frac{\Gamma(v)}{c^2}\biggr)v,\quad\Gamma(v):=\frac{c}{\sqrt{c^2-|v|^2}}.$$
	Then, it was verified in \cite{ref12} that $\hat{w}$ is one-to-one, and it has an inverse function $\hat{v}:\mathbb{R}^d\to B_c(0).$ Furthermore, we consider the map $g:|v|\mapsto|\hat{w}(v)|$ defined by 
	$$g(|v|):=\frac{c|v|}{\sqrt{c^2-|v|^2}}+\frac{|v|}{c^2-|v|^2}$$
	and its inverse $g^{-1}:|w|\mapsto|\hat{v}(w)|.$ As long as there is no confusion, we also suppress “hat” notations in $\hat{v},\hat{w}$ notations and simply write $w$ and $v$ instead of $\hat{w}$ and $\hat{v}$, respectively. We also note that $$w_i:=F(v_i)v_i.$$
	Observe that it is straightforward to see that $g$ and $g^{-1}$ are increasing functions. 
	
	The authors has provided the form of Relativistic Cucker-Smale model in \cite{ref10}, as following,
	\begin{equation}
		\small{	\left\{
			\begin{aligned}
				&\displaystyle{\frac{\mathrm{d}x_i}{\mathrm{d}t}=v_i,\quad i\in \left[ N \right] :=\left\{ 1,2,...,N \right\} ,\quad t>0,}\\
				&\displaystyle{\frac{\mathrm{d}w_i}{\mathrm{d}t}=\frac{1}{N}\sum_{j=1}^N{\phi}\left( |x_i-x_j| \right) \left( v_j-v_i \right).}
			\end{aligned}
			\right.}\nonumber
	\end{equation}
	When the number of particle is to large, it is too complex if we continue using ODE to describe its moving. In a mean-field regime $N\gg1$, the RCS model can be effectively approximated by the corresponding mean-field kinetic equation, namely, the relativistic Kinetic Cucker-Smale (RKCS) equation for one-particle distribution function $f=f(t,x,w)$,
	\begin{equation}
		\small{	\left\{
			\begin{aligned}
				&\partial_tf+\hat{v}(w)\cdot\nabla_xf+\nabla_w\cdot(L[f]f)=0,\\
				&L[f](t,x,w):=-\int_{\mathbb{R}^{2d}}\phi(|x_*-x|)(\hat{v}(w)-\hat{v}(w_*))f(t,x_*,w_*)\mathrm{d}x_*\mathrm{d}w_*,\\
				&f(0,x,w)=f^{\mathrm{in}}(x,w).           
			\end{aligned}
			\right.}\nonumber
	\end{equation}
	The mean-field limit has been proved strictly in \cite{ref11}.
	
	Next, we will consider the fluid part. Because the fluid considered in this study is macroscopic and bulk, we still use the incompressible Navier-Stokes equations without considering relativistic effects. This consideration is relatively rational in physics and can appropriately reduce the difficulty of mathematical processing. The famous incompressible Navier-Stokes equations has the following form,
	$$\begin{cases}\partial_tu+(u\cdot \nabla)u+\nabla p-\mu \Delta u=0,\\ \nabla\cdot u=0.\end{cases}$$
	
	Finally, we will consider the interaction between particles and fluid. From \cite{ref26,ref27}, we know that the interaction is through friction forces. Hence, we shall add
	$$-\int_{\mathbb{R}^3}(u(t,x)-w)f(t,x,w)\mathrm{d}w$$
	on the R.H.S of the momentum part of Navier-Stokes equations, and add
	$$\nabla_w\cdot((u-w)f)$$
	on the L.H.S of the RCS part. So far, we have finished the construction of the RCS-NS model \eqref{eq1}.
	
	\subsection{Known Results}
	In this subsection, we present two known conclusions that will help us prove the existence of weak solutions.
	\begin{prop}\cite{ref29}\label{prop2.1}
		Let $T> 0$ and consider a sequence $\{a_n\}_{n\in \mathbb{N}}$ of nonnegative continuous functions on $[0, T]$. Assume that  $\{a_n\}_{n\in \mathbb{N}}$  satisfies, for any $n$,
		$$a_{n+1}(t)\leq A+B\int_0^ta_n(s)\:\mathrm \mathrm{d}s+C\int_0^ta_{n+1}(s)\:\mathrm \mathrm{d}s,\quad0\leq t\leq T,$$
		where $A$, $B$ and $C$ are nonnegative constants.\\
		If $A= 0$, there exists a constant $K\geq 0$ such that
		$$a_n(t)\leq\dfrac{K^nt^n}{n!},\quad0\leq t\leq T,\quad n\in\mathbb{N}.$$
		If $A>0$, there exists a constant $K \geq 0$ depending on $A$, $B$, $C$ such that
		$$a_{n}(t)\leq K\exp(Kt),\quad0\leq t\leq T,\quad n\in\mathbb{N}.$$
	\end{prop}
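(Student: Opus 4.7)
The plan is to first eliminate the self-reference term $C\int_0^t a_{n+1}(s)\,\mathrm{d}s$ on the right-hand side by a single application of the integral Grönwall lemma, and then to close the argument by an induction on $n$ performed separately in the two cases.

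For the reduction step, I rewrite the hypothesis as
$$a_{n+1}(t)\le h_n(t)+C\int_0^t a_{n+1}(s)\,\mathrm{d}s,\qquad h_n(t):=A+B\int_0^t a_n(s)\,\mathrm{d}s,$$
and observe that $h_n$ is nondecreasing in $t$. The standard integral form of Grönwall's inequality then yields $a_{n+1}(t)\le h_n(t)\,e^{Ct}\le e^{CT}\!\left(A+B\int_0^t a_n(s)\,\mathrm{d}s\right)$ on $[0,T]$. Setting $A_0:=Ae^{CT}$ and $K_0:=Be^{CT}$, the recursion reduces to the clean one-sided form
$$a_{n+1}(t)\le A_0+K_0\int_0^t a_n(s)\,\mathrm{d}s.$$

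When $A=0$, I let $M:=\sup_{[0,T]}a_0$, which is finite by continuity of $a_0$ on the compact interval, and an elementary induction on $n$ gives $a_n(t)\le M\,(K_0 t)^n/n!$. Choosing $K\ge K_0\max(1,M)$ absorbs the prefactor and yields the desired $a_n(t)\le K^n t^n/n!$. When $A>0$, I try the ansatz $a_n(t)\le Ke^{Kt}$ with $K$ to be chosen at the end. The base case holds provided $K\ge\sup_{[0,T]}a_0$. For the inductive step, substituting $a_n(s)\le Ke^{Ks}$ into the reduced recursion gives
$$a_{n+1}(t)\le A_0+K_0\int_0^t Ke^{Ks}\,\mathrm{d}s = A_0+K_0(e^{Kt}-1)\le A_0+K_0 e^{Kt},$$
which is bounded by $Ke^{Kt}$ as soon as $K\ge A_0+K_0$. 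Taking $K:=\max\!\bigl(\sup_{[0,T]}a_0,\,A_0+K_0\bigr)$ therefore closes the induction and finishes the case $A>0$.

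I do not anticipate any substantive analytic obstacle. The only delicate issue is the bookkeeping required to pin down a single constant $K$ that works uniformly in $n$ and simultaneously dominates the initial datum $a_0$ and the constants $A$, $B$, $C$, $T$; but this is handled by taking $K$ large at the end. Both cases reduce to the integral Grönwall lemma followed by a direct induction, and no sharper estimate is needed.
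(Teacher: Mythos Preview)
The paper does not supply its own proof of this proposition; it is quoted verbatim from \cite{ref29} without argument, so there is nothing in the paper to compare against. Your proof is correct and entirely standard: the reduction step via the integral Gr\"onwall lemma cleanly eliminates the self-referential term $C\int_0^t a_{n+1}$, and the two inductions that follow are routine and close properly.

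Two minor cosmetic remarks, both about the \emph{statement} rather than your argument. First, at $n=0$ the claimed bound reads $K^0 t^0/0!=1$, which cannot dominate an arbitrary $a_0$; the estimate should be understood for $n\ge 1$, which is exactly how it is applied later in Lemma~\ref{lem4.7}. Second, your constant $K$ necessarily depends on $\sup_{[0,T]}a_0$ and on $T$ in addition to $A,B,C$; the proposition underreports this dependence, but it is unavoidable (one can take all $a_n$ equal to a large constant and still satisfy the recursion when $A>0$).
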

	We next give velocity-moment estimates in the lemmas below. We set
	\begin{equation}
		m_{\alpha}f(t,x)= \int_{\mathbb{R}^3}|w|^\alpha f(t,x,w) \mathrm{d}w,  M_{\alpha}f(t)=\int_{\mathbb{T}^3\times\mathbb{R}^3}|w|^\alpha f(t,x,w) \mathrm{d}x\mathrm{d}w
	\end{equation}
	for any $t\in[0,T]$, $ {x}\in\mathbb{T}^{3}$ and $\alpha\geq0$. Note that 
	$$M_{\alpha}f(t)  =\int_{\mathbb{T}^3}m_\alpha f(t, {x})\mathrm{d}{x}.$$
	\begin{prop}\cite{ref29}\label{prop2.2}
		Let $\beta> 0$ and $g$ be a non-negative function in $L^\infty ( ( 0, T) \times \mathbb{T} ^3\times \mathbb{R} ^3)$, such that $m_\beta g( t, x ) < + \infty$, for a.e. $( t, x ) .$ The following estimate holds for any $\alpha < \beta :$
		$$m_\alpha g(t,x)\leq\left(\frac{4}{3}\pi\|g(t,x,\cdot)\|_{L^\infty(\mathbb{R}^3)}+1\right)m_\beta g(t,x)^{\frac{\alpha+3}{\beta+3}},\quad a.e. (t,x).$$
	\end{prop}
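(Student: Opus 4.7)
The plan is to use a standard thresholding/interpolation argument. I would introduce a free parameter $R>0$, split the defining integral of $m_\alpha g(t,x)$ into the low-velocity region $\{|w|\leq R\}$ and the high-velocity region $\{|w|>R\}$, bound each piece separately, and then optimize in $R$. The intuition is that since $\alpha<\beta$, the $L^\infty$-control on $g$ naturally handles small velocities, whereas large velocities are controlled by the higher moment $m_\beta g$.

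For the low-velocity piece, I would use the pointwise bounds $|w|^\alpha\leq R^\alpha$ on $B_R(0)$ together with $g(t,x,w)\leq\|g(t,x,\cdot)\|_{L^\infty(\mathbb{R}^3)}$ and $|B_R(0)|=\tfrac{4}{3}\pi R^3$ to get
$$\int_{|w|\leq R}|w|^\alpha g(t,x,w) \mathrm{d}w\leq \frac{4\pi}{3}\|g(t,x,\cdot)\|_{L^\infty(\mathbb{R}^3)} R^{\alpha+3}.$$
A small item worth flagging: using the cruder bound $|w|^\alpha\leq R^\alpha$ rather than computing $\int_{B_R}|w|^\alpha \mathrm{d}w$ exactly is what produces the clean, $\alpha$-independent prefactor $\tfrac{4\pi}{3}$ stated in the proposition. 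For the high-velocity piece, I would exploit $\alpha<\beta$ via $|w|^\alpha=|w|^{\alpha-\beta}|w|^\beta\leq R^{\alpha-\beta}|w|^\beta$, obtaining
$$\int_{|w|>R}|w|^\alpha g(t,x,w) \mathrm{d}w\leq R^{\alpha-\beta}m_\beta g(t,x).$$

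Adding the two estimates and choosing $R$ so that the two pieces carry matching powers of $m_\beta g(t,x)$ forces the choice $R=m_\beta g(t,x)^{1/(\beta+3)}$. Substituting, the outer-region term becomes exactly $m_\beta g(t,x)^{(\alpha+3)/(\beta+3)}$ and the inner-region term becomes $\tfrac{4\pi}{3}\|g(t,x,\cdot)\|_{L^\infty(\mathbb{R}^3)} m_\beta g(t,x)^{(\alpha+3)/(\beta+3)}$; summing yields the claimed bound. The argument is essentially routine and I do not foresee a genuine obstacle—the only real design choice is the deliberately non-sharp estimate on $B_R$ in order to match the stated $\alpha$-independent constant.
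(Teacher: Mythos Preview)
The paper does not supply its own proof of this proposition; it is quoted from \cite{ref29} and stated without argument. Your thresholding argument---splitting at a radius $R$, bounding the inner piece by $\tfrac{4}{3}\pi\|g(t,x,\cdot)\|_{L^\infty}R^{\alpha+3}$ and the outer piece by $R^{\alpha-\beta}m_\beta g$, then choosing $R=(m_\beta g)^{1/(\beta+3)}$---is exactly the standard proof of this moment interpolation estimate and is correct as written.
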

	
	\subsection{Some estimates of Lorentz transformation}
	In this subsection, we will give some known propositions of the Lorentz transformation. These propositions show the relationship between the energy function $E$, the velocity $v$ and relativistic velocity $w$. In fact, these results can be reached by direct calculation.
	\begin{prop}\cite{ref32}
		For $v \in B_c(0)$, let $w = \hat{w}(v)$ be a function defined by $\displaystyle{\hat{w}(v):=\Gamma(v)\left(1+\frac{\Gamma(v)}{c^2}\right)v}$ as above. Then, one has
		\begin{equation}
			\nabla_w E=v.
		\end{equation}
	\end{prop}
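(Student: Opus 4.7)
The plan is to use the chain rule. Since $w=\hat{w}(v)$ is a diffeomorphism with Jacobian $D\hat{w}(v)$, one has $\nabla_w E=(D\hat{w}(v))^{-T}\nabla_v E(v)$, so the target identity $\nabla_w E=v$ is equivalent to the algebraic identity
$$\nabla_v E(v)=(D\hat{w}(v))^{T}v.$$
From here the proof is a direct computation, which I would organize in four short steps.

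First, I would differentiate the Lorentz factor. From $\Gamma^2=c^2/(c^2-|v|^2)$ one obtains $\nabla_v\Gamma=v\Gamma^3/c^2$, together with the key auxiliary identity $|v|^2\Gamma^2=c^2(\Gamma^2-1)$ coming from $|v|^2=c^2(1-\Gamma^{-2})$. Next, I would compute $\nabla_v E$ term by term: since $E=c^2(\Gamma-1)+\Gamma^2-\log\Gamma$,
$$\nabla_v E=\bigl(c^2+2\Gamma-\Gamma^{-1}\bigr)\nabla_v\Gamma=\bigl(c^2+2\Gamma-\Gamma^{-1}\bigr)\frac{\Gamma^3}{c^2}\,v.$$

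Then I would compute $D\hat{w}$. Writing $\hat{w}(v)=\alpha(v)\,v$ with $\alpha(v)=\Gamma+\Gamma^2/c^2$, the product rule yields
$$D\hat{w}=\alpha\,I+v\otimes\nabla_v\alpha=\Bigl(\Gamma+\tfrac{\Gamma^2}{c^2}\Bigr)I+\Bigl(1+\tfrac{2\Gamma}{c^2}\Bigr)\,v\otimes\nabla_v\Gamma,$$
so that
$$(D\hat{w})^{T}v=\Bigl(\Gamma+\tfrac{\Gamma^2}{c^2}\Bigr)v+\Bigl(1+\tfrac{2\Gamma}{c^2}\Bigr)\frac{|v|^2\Gamma^3}{c^2}\,v.$$
Finally, I would substitute $|v|^2\Gamma^2=c^2(\Gamma^2-1)$ to reduce the coefficient of $v$ on the right to $\Gamma^3+(2\Gamma^4-\Gamma^2)/c^2$, which is exactly what one gets by expanding $(c^2+2\Gamma-\Gamma^{-1})\Gamma^3/c^2$. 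This shows $\nabla_v E=(D\hat{w})^{T}v$, and hence $\nabla_w E=v$.

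The main obstacle is not conceptual — it is just bookkeeping of the various powers of $\Gamma$ and $c$. The identity is essentially forced by a Legendre-transform structure ($w$ plays the role of the momentum conjugate to $v$, and $E$ the corresponding Hamiltonian), so the two sides must agree once the single substitution $|v|^2\Gamma^2=c^2(\Gamma^2-1)$ is used to eliminate the raw $|v|^2$ factors.
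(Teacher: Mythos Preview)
Your proof is correct. The paper does not give its own proof of this proposition; it merely cites \cite{ref32} and remarks that ``these results can be reached by direct calculation,'' which is precisely what you have carried out via the chain rule and the identity $|v|^2\Gamma^2=c^2(\Gamma^2-1)$.
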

	\begin{prop}\cite{ref32}\label{prop2.4}
		For $v \in B_c(0)$, let $w = \hat{w}(v)$ be a function defined by $\displaystyle{\hat{w}(v):=\Gamma(v)\left(1+\frac{\Gamma(v)}{c^2}\right)v}$ as above. Then, the Jacobian $\nabla_v \hat{w}$ has two eigenvalues. Furthermore, the eigenvalues of
		$\nabla_v \hat{w}$ are $\lambda_1$ with multiplicity $d-1$ and $\lambda_2$ with multiplicity 1,
		\begin{equation}
			\lambda_1(v)=\frac{c}{\sqrt{c^2-|v|^2}}+\frac{1}{c^2-|v|^2}=F(v)=1+|O(c^{-2})|
		\end{equation}
		and
		\begin{equation}
			\lambda_2(v)=\frac{c|v|^2}{(c^2-|v|^2)^{\frac{3}{2}}}+\frac{2|v|^2}{(c^2-|v|^2)^2}+\frac{c}{\sqrt{c^2-|v|^2}}+\frac{1}{c^2-|v|^2}=1+|O(c^{-2})|.
		\end{equation}
	\end{prop}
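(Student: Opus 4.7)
The plan is to recognize $\nabla_v\hat{w}$ as a rank-one perturbation of a scalar multiple of the identity, then read off the eigenvalues from that structure, and finally expand each one in powers of $c^{-2}$.

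First I would write $\hat{w}(v)=F(v)v$ with the scalar $F(v)=\frac{c}{\sqrt{c^2-|v|^2}}+\frac{1}{c^2-|v|^2}$, so that the product rule gives
\begin{equation*}
\nabla_v\hat{w}(v)=F(v)\,I_d+v\otimes\nabla_vF(v).
\end{equation*}
A direct computation of $\nabla_vF$ from
\begin{equation*}
F(v)=\frac{c}{\sqrt{c^2-|v|^2}}+\frac{1}{c^2-|v|^2}
\end{equation*}
yields $\nabla_vF(v)=G(v)\,v$, where $G(v):=\frac{c}{(c^2-|v|^2)^{3/2}}+\frac{2}{(c^2-|v|^2)^2}$. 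Substituting back gives the clean rank-one decomposition
\begin{equation*}
\nabla_v\hat{w}(v)=F(v)\,I_d+G(v)\,(v\otimes v).
\end{equation*}

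From this form the spectrum is immediate. For any vector $\xi\perp v$ we have $(v\otimes v)\xi=0$, hence $\nabla_v\hat{w}\cdot\xi=F(v)\,\xi$; this gives the eigenvalue $\lambda_1=F(v)$ with multiplicity $d-1$, matching the claimed formula. For the direction $v$ itself we have $(v\otimes v)v=|v|^2v$, so $\nabla_v\hat{w}\cdot v=(F(v)+G(v)|v|^2)\,v$, producing the simple eigenvalue
\begin{equation*}
\lambda_2(v)=F(v)+G(v)|v|^2=\frac{c|v|^2}{(c^2-|v|^2)^{3/2}}+\frac{2|v|^2}{(c^2-|v|^2)^2}+\frac{c}{\sqrt{c^2-|v|^2}}+\frac{1}{c^2-|v|^2},
\end{equation*}
as stated.

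The asymptotic part is then a Taylor expansion in $|v|^2/c^2$. For $\lambda_1$, write $\frac{c}{\sqrt{c^2-|v|^2}}=(1-|v|^2/c^2)^{-1/2}=1+\tfrac{1}{2}|v|^2/c^2+O(c^{-4})$ and observe $\frac{1}{c^2-|v|^2}=O(c^{-2})$, giving $\lambda_1=1+|O(c^{-2})|$. For $\lambda_2$, the two additional terms are $\frac{c|v|^2}{(c^2-|v|^2)^{3/2}}=\frac{|v|^2}{c^2}(1-|v|^2/c^2)^{-3/2}=O(c^{-2})$ and $\frac{2|v|^2}{(c^2-|v|^2)^2}=O(c^{-4})$, so $\lambda_2=1+|O(c^{-2})|$ as well. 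There is no real obstacle here; the only point requiring a little care is recognizing the rank-one structure so that one does not have to diagonalize a general $d\times d$ matrix by hand — once that reduction is made, everything is a short calculation.
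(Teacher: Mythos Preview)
Your argument is correct. The paper itself does not supply a proof of this proposition: it is stated with a citation to \cite{ref32} and the surrounding text simply remarks that ``these results can be reached by direct calculation.'' Your rank-one decomposition $\nabla_v\hat{w}=F(v)I_d+G(v)\,v\otimes v$ is precisely that direct calculation, and the eigenvalue identification and $c^{-2}$ expansion follow exactly as you wrote. The only cosmetic point is that at $v=0$ the splitting into $v^{\perp}$ and $\mathrm{span}\{v\}$ degenerates, but there $\nabla_v\hat{w}=F(0)I_d$ and both eigenvalues coincide, so the formulas remain valid.
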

	\begin{prop}\cite{ref32}
		For $v \in B_c(0)$, let $w = \hat{w}(v)$ be a function defined by $\displaystyle{\hat{w}(v):=\Gamma(v)\left(1+\frac{\Gamma(v)}{c^2}\right)v}$ as above. Then, one has
		\begin{equation}
			\nabla_{w}\cdot \hat{v}=d-|O(c^{-2})|<d, \quad\det(\nabla_{w} \hat{v})=1-|O(c^{-2})|,\quad\det(\nabla_{v}\hat{w})=1+|O(c^{-2})|
		\end{equation}
	\end{prop}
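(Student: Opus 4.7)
The plan is to reduce everything to the eigenvalue information in Proposition \ref{prop2.4}. Since $\hat{w}\colon B_c(0)\to\mathbb{R}^d$ is a diffeomorphism with inverse $\hat{v}$, the inverse function theorem gives $\nabla_w\hat{v}(w)=[\nabla_v\hat{w}(v)]^{-1}$ at $w=\hat{w}(v)$, so the eigenvalues of $\nabla_w\hat{v}$ are simply the reciprocals of those of $\nabla_v\hat{w}$: namely $1/\lambda_1(v)$ with multiplicity $d-1$ and $1/\lambda_2(v)$ with multiplicity $1$. This observation immediately reduces all three identities to elementary algebra on the two scalar eigenvalues.

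First I would handle the two determinants. By multiplicativity,
\begin{equation*}
\det(\nabla_v\hat{w})=\lambda_1^{d-1}\lambda_2,\qquad \det(\nabla_w\hat{v})=\lambda_1^{-(d-1)}\lambda_2^{-1}.
\end{equation*}
Writing $\lambda_i=1+\varepsilon_i$ with $\varepsilon_i=|O(c^{-2})|\geq 0$ by Proposition \ref{prop2.4}, a direct expansion gives $\lambda_1^{d-1}\lambda_2=1+(d-1)\varepsilon_1+\varepsilon_2+O(c^{-4})=1+|O(c^{-2})|$, and then $\lambda_1^{-(d-1)}\lambda_2^{-1}=1-|O(c^{-2})|$ follows from the geometric series $1/(1+\varepsilon)=1-\varepsilon+\varepsilon^2-\cdots$, valid uniformly wherever $|v|/c$ is bounded away from $1$.

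For the divergence I rewrite it as a trace, $\nabla_w\cdot\hat{v}=\mathrm{tr}(\nabla_w\hat{v})=(d-1)/\lambda_1+1/\lambda_2$. The explicit formulas in Proposition \ref{prop2.4} show that $\lambda_1,\lambda_2>1$ throughout $B_c(0)$: indeed $c/\sqrt{c^2-|v|^2}\geq 1$ and $1/(c^2-|v|^2)>0$ already force $\lambda_1>1$, while the extra nonnegative terms in $\lambda_2$ make $\lambda_2>1$ as well. Hence each reciprocal is strictly below $1$, which yields the strict bound $\nabla_w\cdot\hat{v}<d$; the same Taylor expansion as above refines this to $d-|O(c^{-2})|$. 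The only genuine obstacle I anticipate is bookkeeping — making the informal $|O(c^{-2})|$ notation precise and uniform on the admissible range of $v$ — but since $\lambda_1$ and $\lambda_2$ are explicit rational functions of $|v|^2/c^2$ this is entirely mechanical and carries no analytic difficulty.
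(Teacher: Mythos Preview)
Your approach is correct and matches the paper's own treatment. The paper does not write out a proof of this proposition (it is quoted from \cite{ref32} with the comment that ``these results can be reached by direct calculation''), but the formula used immediately afterward in Remark~\ref{rem2.1},
\[
\nabla_w\cdot \hat{v}=\frac{d-1}{\lambda_1(v)}+\frac{1}{\lambda_2(v)},
\]
is precisely your trace-of-the-inverse computation, confirming that the intended argument is the one you describe: invert $\nabla_v\hat{w}$ via the inverse function theorem, read off the reciprocal eigenvalues from Proposition~\ref{prop2.4}, and expand.
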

	\begin{rem}\label{rem2.1}
		From Proposition \ref{prop2.4}, we note that $\lambda_1(v)=:g_1(|v|)$ and $\lambda_2(v)=:g_2(|v|)$. Then, for a constant $V$ such that $V<c$, both $g_1(r)$ and $g_2(r)$ are Lipschitz continuous on the interval $[0,V]$, i.e.
		$$\exists L(V)>0 \text{ s.t. }\forall s,t\in[0,V],\, |g_i(s)-g_i(t)|\leq L(V)|s-t|,\, i=1,2.$$
		Therefore, for $v_1:=\hat{v}(w_1)$, $v_2:=\hat{v}(w_2) \in B_V(0)$, we have
		\begin{align}
			&\nabla_w\cdot v_1-\nabla_w\cdot v_2=\left(\frac{d-1}{g_1(|v_1|)}+\frac{1}{g_2(|v_1|)}\right)-\left(\frac{d-1}{g_1(|v_2|)}+\frac{1}{g_2(|v_2|)}\right)\nonumber\\
			&\leq \left[\left(\frac{d-1}{g_1(r)}+\frac{1}{g_2(r)}\right)\right]_{\text{Lips}}\bigg||v_1|-|v_2|\bigg|\leq \left(\frac{(d-1)L(V)}{[g_1(0)]^2}+\frac{L(V)}{[g_2(0)]^2}\right)|v_1-v_2|=:C(V)|v_1-v_2|.
		\end{align}
		This means $\nabla_w\cdot v$ is Lipschitz continuous in some sense. 
	\end{rem}
	\begin{prop}\cite{ref32}
		For $v_1$, $v_2 \in B_c(0)$, let $w_i := \hat{w}(v_i)$ be a function defined by $\displaystyle{\hat{w}(v):=\Gamma(v)\left(1+\frac{\Gamma(v)}{c^2}\right)v}$ as above. Then, one has
		\begin{equation}
			\frac{c^2+1}{c^2}|v_1-v_2|^2\leq (v_1-v_2)\cdot(w_1-w_2)\leq  \frac{c^2}{c^2+1}|w_1-w_2|^2.
		\end{equation}
	\end{prop}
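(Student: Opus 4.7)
The plan is to derive both inequalities from the spectral information on the Jacobian $\nabla_v\hat w$ that is already supplied by Proposition \ref{prop2.4}, combined with the elementary observation that $\hat w(v)=F(v)v$ with $F(v)=F(|v|)$ is a radial dilation, so $\nabla_v\hat w$ is a symmetric matrix. Writing $F(r)=c/\sqrt{c^2-r^2}+1/(c^2-r^2)$ with $r=|v|$, a direct computation gives
\[
\nabla_v\hat w(v)=F(|v|)\,I+\frac{F'(|v|)}{|v|}\,v\otimes v,
\]
which is symmetric and has eigenvalues $F(|v|)=\lambda_1(v)$ (multiplicity $d-1$, eigenspace $v^{\perp}$) and $F(|v|)+F'(|v|)|v|=\lambda_2(v)$ (simple, eigenvector $v$). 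A quick evaluation at $v=0$ gives $\lambda_1(0)=\lambda_2(0)=(c^2+1)/c^2$, and both $\lambda_i$ are non-decreasing in $|v|$, so uniformly on $B_c(0)$
\[
\lambda_i(v)\ \geq\ \frac{c^2+1}{c^2},\qquad i=1,2.
\]

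To obtain the first inequality I would parametrise the segment $v_s:=(1-s)v_2+sv_1$ (which remains in $B_c(0)$ by convexity) and use the fundamental theorem of calculus:
\[
w_1-w_2=\int_0^1 \nabla_v\hat w(v_s)\,(v_1-v_2)\,\mathrm ds.
\]
Taking the dot product with $v_1-v_2$ and using the fact that the integrand is a quadratic form against a symmetric matrix whose spectrum is bounded below by $(c^2+1)/c^2$ yields the lower bound
$(v_1-v_2)\cdot(w_1-w_2)\geq \tfrac{c^2+1}{c^2}|v_1-v_2|^2$.

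For the upper bound I would apply the same idea to the inverse map $\hat v=\hat w^{-1}$. Since $\nabla_w\hat v(w)=(\nabla_v\hat w(\hat v(w)))^{-1}$ and the inverted matrix is symmetric, its eigenvalues are $1/\lambda_1$ and $1/\lambda_2$, both bounded above by $c^2/(c^2+1)$. Parametrising now in $w$-space via $w_s:=(1-s)w_2+sw_1$ and writing
\[
v_1-v_2=\int_0^1 \nabla_w\hat v(w_s)\,(w_1-w_2)\,\mathrm ds,
\]
the dot product with $w_1-w_2$ gives $(v_1-v_2)\cdot(w_1-w_2)\leq \tfrac{c^2}{c^2+1}|w_1-w_2|^2$.

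The only genuine obstacle is verifying the two technical points that make the segment argument legitimate: that $\nabla_v\hat w$ is symmetric (which is not obvious from the eigenvalue formulas alone but follows from the radial structure $F(v)=F(|v|)$), and that the monotonicity in $|v|$ of $\lambda_1$ and $\lambda_2$ really gives the uniform lower bound $(c^2+1)/c^2$ everywhere on $B_c(0)$. Once these are in hand, both estimates are reduced to the pointwise spectral bounds already recorded in Proposition \ref{prop2.4}, and no further computation is required.
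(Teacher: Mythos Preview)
The paper does not supply a proof of this proposition; it is quoted from \cite{ref32} without argument. Your proposal therefore stands on its own rather than being compared to an in-text proof, and it is correct.

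Both technical points you flag are easily settled. Symmetry of $\nabla_v\hat w$ is immediate from your formula $\nabla_v\hat w(v)=F(|v|)\,I+\tfrac{F'(|v|)}{|v|}\,v\otimes v$, since $v\otimes v$ is symmetric. For the uniform spectral lower bound you do not even need monotonicity of $\lambda_2$ separately: because $F'(r)\geq 0$ on $[0,c)$ one has $\lambda_2(v)=F(|v|)+|v|\,F'(|v|)\geq F(|v|)=\lambda_1(v)\geq F(0)=(c^2+1)/c^2$, where the last inequality holds since $F$ is a sum of two functions increasing in $r$. With these facts in place, the two line-integral representations and the pointwise spectral bounds give the inequalities exactly as you describe.
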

	
	\subsection{Conservation laws and energy dissipation}
	The following lemmas will describe the change rule of the mass, momentum and energy of the RCS-NS system.
	\begin{lem}\label{lem2.1}
		Let $[f (t, x, w),u(t,x)]$ be a smooth solution to \eqref{eq1}, and let $f(t,x,w)$ decay to zero sufficiently fast in the $w$-variable. Then, we have the following assertions.
		
		(1) The conservation of mass: $$\displaystyle{\frac{\mathrm{d}}{\mathrm{d}t}\int_{\mathbb{T}^3 \times \mathbb{R}^3}f(t,x,w)\mathrm{d}x\mathrm{d}w=0},$$
		
		(2) The conservation of momentum: $$\displaystyle{\frac{\mathrm{d}}{\mathrm{d}t}\left(\int_{\mathbb{T}^3 \times \mathbb{R}^3}w f(t,x,w)\mathrm{d}x\mathrm{d}w+\int_{\mathbb{T}^3}u\mathrm{d}x\right)=0,}$$
		
		(3) The estimate of total energy: 
		\begin{align}\label{eq2.8}
			&\frac{\mathrm{d}}{\mathrm{d}t}\left(\int_{\mathbb{T}^3 \times \mathbb{R}^3}E f(t,x,w)\mathrm{d}x\mathrm{d}w+\frac{1}{2}\int_{\mathbb{T}^3}|u|^2\mathrm{d}x\right)+\mu \int_{\mathbb{T}^3}|\nabla u|^2\mathrm{d}x+\int_{\mathbb{T}^3 \times \mathbb{R}^3}(u-\hat{v}(w))\cdot (u-w)f\mathrm{d}x\mathrm{d}w\nonumber\\
			&=-\frac{1}{2}\int_{\mathbb{T}^6 \times \mathbb{R}^6}\phi(|x-x_*|)|\hat{v}(w_*)-\hat{v}(w)|^2f(t,x,w))f(t,x_*,w_*)\mathrm{d}x\mathrm{d}w \mathrm{d}x_*\mathrm{d}w_*.
		\end{align}
	\end{lem}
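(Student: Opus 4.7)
The plan is to test the kinetic equation in \eqref{eq1} against the scalar observables $1$, $w$, $E(v)$ and to test the Navier--Stokes momentum equation against $1$ and $u$, then combine the resulting identities. All of the required boundary terms vanish: the $x$-boundary terms cancel by the periodicity hypothesis on $\mathbb{T}^3$, and the $w$-boundary terms drop by the stated fast decay of $f$ in $w$. Throughout, I will freely use the identity $\nabla_w E=v$ from the earlier proposition, as well as $F[f,u]=L[f]+(u-w)$ and $\nabla_x\cdot u=0$.

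For (1), I would integrate the Vlasov equation of \eqref{eq1} over $\mathbb{T}^3\times\mathbb{R}^3$. The transport term $\hat v(w)\cdot\nabla_x f$ is an $x$-divergence and integrates to zero by periodicity, while $\nabla_w\cdot(F[f,u]f)$ integrates to zero by the $w$-decay assumption, giving conservation of $\int f\,\mathrm dx\mathrm dw$. For (2), I would multiply the Vlasov equation componentwise by $w$ and integrate. After moving one $\nabla_w$ onto $w$ the right-hand side becomes $\int F[f,u]f\,\mathrm dx\mathrm dw$, whose $L[f]$ contribution vanishes by the standard symmetrization under the exchange $(x,w)\leftrightarrow(x_*,w_*)$ (this uses that $\phi(|x-x_*|)$ is symmetric and $\hat v(w)-\hat v(w_*)$ is antisymmetric), leaving exactly $\int(u-w)f\,\mathrm dx\mathrm dw$. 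Integrating the Navier--Stokes equation over $\mathbb{T}^3$, the convection term is a pure $x$-divergence (using $\nabla\cdot u=0$), the pressure integrates to zero, and the Laplacian term vanishes by periodicity, so $\frac{\mathrm d}{\mathrm dt}\int u\,\mathrm dx=-\int(u-w)f\,\mathrm dx\mathrm dw$. The two contributions cancel, proving momentum conservation.

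For (3), the key step is to multiply the Vlasov equation by $E(v)$ and integrate. The $x$-transport term is again a divergence and drops out. Integrating the $\nabla_w\cdot(F[f,u]f)$ term by parts and using $\nabla_w E=v$ yields
\begin{equation*}
\frac{\mathrm d}{\mathrm dt}\!\int\! Ef\,\mathrm dx\mathrm dw \;=\; \int v\cdot L[f]\,f\,\mathrm dx\mathrm dw \;+\; \int v\cdot(u-w)f\,\mathrm dx\mathrm dw.
\end{equation*}
Symmetrizing the first integral in $(x,w)\leftrightarrow(x_*,w_*)$ converts $v(w)\cdot(\hat v(w)-\hat v(w_*))$ into $\tfrac12|\hat v(w)-\hat v(w_*)|^2$, producing precisely the right-hand side of \eqref{eq2.8}. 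In parallel, testing the Navier--Stokes equation against $u$ and using $\nabla\cdot u=0$ kills the convection and pressure terms and turns the viscous term into $\mu\int|\nabla u|^2\,\mathrm dx$, giving
\begin{equation*}
\frac{\mathrm d}{\mathrm dt}\,\tfrac12\|u\|_{L^2}^2 + \mu\int|\nabla u|^2\,\mathrm dx \;=\; -\int u\cdot(u-w)f\,\mathrm dx\mathrm dw.
\end{equation*}
Adding the two identities, the drag contributions combine into $-\int(u-\hat v(w))\cdot(u-w)f\,\mathrm dx\mathrm dw$, which when moved to the left-hand side gives exactly \eqref{eq2.8}.

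The only genuine subtlety is bookkeeping: one must be careful that the symmetrization of the $L[f]$ term and the combination of the two friction terms (one from testing the Vlasov equation by $E$, one from testing Navier--Stokes by $u$) line up to produce the correct sign and the correct factor of $\tfrac12$ in the alignment dissipation. Beyond this, the argument is standard, provided the decay of $f$ in $w$ is strong enough to justify all of the integration by parts in $w$; a qualitative growth hypothesis on $E(v)f$ and its $w$-derivatives is implicit in the statement and needs no further work.
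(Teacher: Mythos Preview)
Your proposal is correct and follows essentially the same route as the paper: test the kinetic equation against $1$, $w$, $E$ (using $\nabla_w E=\hat v(w)$), test the Navier--Stokes equation against $1$ and $u$, symmetrize the $L[f]$ contributions under $(x,w)\leftrightarrow(x_*,w_*)$, and combine the drag terms. The paper carries out exactly these steps with the same bookkeeping, so there is nothing to add.
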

	\begin{proof}
		(1) For the conservation of mass, we integrate the first equation \eqref{eq1} over $\mathbb{T}^3 \times \mathbb{R}^3$ to obtain
		\begin{equation}
			\frac{\mathrm{d}}{\mathrm{d}t}\int_{\mathbb{T}^3 \times \mathbb{R}^3}f(t,x,w)\mathrm{d}x\mathrm{d}w=\int_{\mathbb{T}^3 \times \mathbb{R}^3}f(t,x,w)\nabla_{x}\cdot \hat{v}(w) \mathrm{d}x\mathrm{d}w=0
		\end{equation}
		(2) For the conservation of momentum, we multiply the first equation \eqref{eq1} by $w$ and integrate the resulting equation over $\mathbb{T}^3 \times \mathbb{R}^3$ to obtain
		\begin{equation}\label{eq2}
			\frac{\mathrm{d}}{\mathrm{d}t}\int_{\mathbb{T}^3 \times \mathbb{R}^3}w f(t,x,w)\mathrm{d}x\mathrm{d}w=-\int_{\mathbb{T}^3 \times \mathbb{R}^3}w \nabla_{w}\cdot [L[f]+F_d)f] \mathrm{d}x\mathrm{d}w=\int_{\mathbb{T}^3 \times \mathbb{R}^3}(L[f]+F_d)f\mathrm{d}x\mathrm{d}w.
		\end{equation}
		Since
		\begin{align}
			\int_{\mathbb{T}^3 \times \mathbb{R}^3}L[f]f\mathrm{d}x\mathrm{d}w&=\int_{\mathbb{T}^6 \times \mathbb{R}^6}\phi(|x-x_*|)(\hat{v}(w_*)-\hat{v}(w))f(t,x_*,w_*)f(t,x,w)\mathrm{d}x_*\mathrm{d}w_*\mathrm{d}x\mathrm{d}w\nonumber\\
			&=\int_{\mathbb{T}^6 \times \mathbb{R}^6}\phi(|x-x_*|)(\hat{v}(w)-\hat{v}(w_*))f(t,x,w)f(t,x_*,w_*)\mathrm{d}x\mathrm{d}w \mathrm{d}x_*\mathrm{d}w_*\nonumber\\
			&\quad \text{by } (x,w)\leftrightarrow (x_*,w_*)\nonumber\\
			&=-\int_{\mathbb{T}^3 \times \mathbb{R}^3}L[f]f\mathrm{d}x\mathrm{d}w,
		\end{align}
		this implies
		\begin{equation}
			\int_{\mathbb{T}^3 \times \mathbb{R}^3}L[f]f\mathrm{d}x\mathrm{d}w=0.
		\end{equation}
		Therefore, it follows from \eqref{eq2} that
		\begin{equation}\label{eq3}
			\frac{\mathrm{d}}{\mathrm{d}t}\int_{\mathbb{T}^3 \times \mathbb{R}^3}w f(t,x,w)\mathrm{d}x\mathrm{d}w=\int_{\mathbb{T}^3 \times \mathbb{R}^3}F_df\mathrm{d}x\mathrm{d}w=\int_{\mathbb{T}^3 \times \mathbb{R}^3}(u(t,x)-w)f\mathrm{d}x\mathrm{d}w.
		\end{equation}
		On the other hand, it follows from the second equation in \eqref{eq1} that
		\begin{equation}\label{eq4}
			\frac{\mathrm{d}}{\mathrm{d}t}\int_{\mathbb{T}^3}u\mathrm{d}x=-\int_{\mathbb{T}^3 \times \mathbb{R}^3}(u(t,x)-w)f\mathrm{d}x\mathrm{d}w.
		\end{equation}
		Hence, we deduce from \eqref{eq3} and \eqref{eq4} to obtain the conservation of momentum.
		
		(3)For the estimate of energy, we multiply the first equation \eqref{eq1} by $E(v)$ and integrate the resulting equation over $\mathbb{T}^3 \times \mathbb{R}^3$ to obtain
		\begin{align}
			\frac{\mathrm{d}}{\mathrm{d}t}\int_{\mathbb{T}^3 \times \mathbb{R}^3}E f(t,x,w)\mathrm{d}x\mathrm{d}w&=-\int_{\mathbb{T}^3 \times \mathbb{R}^3}E \nabla_{w}\cdot [L[f]+F_d)f] \mathrm{d}x\mathrm{d}w=\int_{\mathbb{T}^3 \times \mathbb{R}^3}\nabla_{w} E \cdot(L[f]+F_d)f\mathrm{d}x\mathrm{d}w\nonumber\\
			&=\int_{\mathbb{T}^3 \times \mathbb{R}^3}\hat{v}(w)\cdot(L[f]+F_d)f\mathrm{d}x\mathrm{d}w.
		\end{align}
		Since
		\begin{align}
			\int_{\mathbb{T}^3 \times \mathbb{R}^3}\hat{v}(w)\cdot L[f]f\mathrm{d}x\mathrm{d}w&=\int_{\mathbb{T}^6 \times \mathbb{R}^6}\phi(|x-x_*|)\hat{v}(w)\cdot(\hat{v}(w_*)-\hat{v}(w))f(t,x_*,w_*)f(t,x,w)\mathrm{d}x_*\mathrm{d}w_*\mathrm{d}x\mathrm{d}w\nonumber\\
			&=-\int_{\mathbb{T}^6 \times \mathbb{R}^6}\phi(|x-x_*|)\hat{v}(w_*)\cdot(\hat{v}(w_*)-\hat{v}(w)f(t,x,w))f(t,x_*,w_*)\mathrm{d}x\mathrm{d}w \mathrm{d}x_*\mathrm{d}w_*\nonumber\\
			&\quad \text{by } (x,w)\leftrightarrow (x_*,w_*)\nonumber\\
			&=-\frac{1}{2}\int_{\mathbb{T}^6 \times \mathbb{R}^6}\phi(|x-x_*|)|\hat{v}(w_*)-\hat{v}(w)|^2f(t,x,w))f(t,x_*,w_*)\mathrm{d}x\mathrm{d}w \mathrm{d}x_*\mathrm{d}w_*,
		\end{align}
		then we have
		\begin{align}\label{eq5}
			\frac{\mathrm{d}}{\mathrm{d}t}\int_{\mathbb{T}^3 \times \mathbb{R}^3}E f(t,x,w)\mathrm{d}x\mathrm{d}w&=\int_{\mathbb{T}^3 \times \mathbb{R}^3}\hat{v}(w)\cdot(L[f]+F_d)f\mathrm{d}x\mathrm{d}w\nonumber\\
			&=-\frac{1}{2}\int_{\mathbb{T}^6 \times \mathbb{R}^6}\phi(|x-x_*|)|\hat{v}(w_*)-\hat{v}(w)|^2f(t,x,w))f(t,x_*,w_*)\mathrm{d}x\mathrm{d}w \mathrm{d}x_*\mathrm{d}w_*\nonumber\\
			&\quad+\int_{\mathbb{T}^3 \times \mathbb{R}^3}\hat{v}(w)\cdot(u(t,x)-w)f\mathrm{d}x\mathrm{d}w.
		\end{align}
		On the other hand, it follows from the second equation in \eqref{eq1} that
		\begin{equation}\label{eq6}
			\frac{1}{2}\frac{\mathrm{d}}{\mathrm{d}t}\int_{\mathbb{T}^3}|u|^2\mathrm{d}x+\mu \int_{\mathbb{T}^3}|\nabla u|^2\mathrm{d}x=-\int_{\mathbb{T}^3 \times \mathbb{R}^3}u\cdot(u-w)f\mathrm{d}x\mathrm{d}w.
		\end{equation}
		Hence, we deduce from \eqref{eq5} and \eqref{eq6} the dissipative structure of the energy.
	\end{proof}
	\begin{rem}
		Unlike non-relativistic situations, the energy estimate \eqref{eq2.8} does not imply the monotonicity of the total energy as it is. More precisely, it is not clear to get that the total energy is not increasing.
	\end{rem}
	\section{Asymptomatic flocking estimate}
	If the fluid velocity is larger than the particle velocity, i.e. $u(x,t) > w$ , then $\displaystyle{-\int_{\mathbb{T}^3\times\mathbb{R}^3}F_df \mathrm{d}x_*\mathrm{d}w_*}$
	is negative; hence, the fluid will decelerate to match the particle's relativistic velocity. In contrast, if the fluid velocity is smaller than the relativistic velocity of the particle, then the drag force makes the fluid accelerate, so alignment is achieved. Therefore, we can view the drag force as a kind of an alignment force.
	
	By conducting a detailed analysis of the Lyapunov functional, we demonstrate the exponential stability of the RCS-NS system in this subsection.
	\begin{lem}[Time-variation estimate for $\mathcal{E}^w$]\label{lem3.1}
		Let $[f (t, x, w),u(t,x)]$ be a smooth solution to \eqref{eq1} satisfying 
		\begin{equation}
			\int_{\mathbb{T}^3 \times \mathbb{R}^3}f^{\text{in}}(x,w)\mathrm{d}x\mathrm{d}w=1,\quad \bigcup\limits_{t\geq 0} \Omega_w (f,t)\subset B_{W_0}(0)\quad \text{for some} \quad W_0>0,
		\end{equation}
		and $f(t,x,w)$ decays to zero sufficiently fast in the $w$-variable. Then, we have
		\begin{equation}
			\frac{\mathrm{d}\mathcal{E}^w(t)}{\mathrm{d}t}\leq-2C(c,W_0)\phi(\sqrt{3})\mathcal{E}^w(t)+2\int_{\mathbb{T}^3 \times \mathbb{R}^3}(w-w_c(t))\cdot (u-w)f\mathrm{d}x\mathrm{d}w.
		\end{equation}
	\end{lem}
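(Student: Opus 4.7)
The plan is to differentiate $\mathcal{E}^w(t)$ directly, substitute the kinetic equation, integrate by parts, and then combine the Cucker-Smale alignment term with the lower bound on $(v-v_*)\cdot(w-w_*)$ from the last proposition of Subsection 2.3 to extract a negative multiple of $\mathcal{E}^w$.

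First I would compute
$$\frac{d\mathcal{E}^w}{dt} = \int_{\mathbb{T}^3\times\mathbb{R}^3}|w-w_c|^2\partial_t f\,dxdw - 2\dot w_c\cdot\int_{\mathbb{T}^3\times\mathbb{R}^3}(w-w_c)f\,dxdw,$$
and observe that the second summand vanishes because, by the definition of $w_c$ together with the conservation of mass $\int f\,dxdw=1$ from Lemma \ref{lem2.1}, $\int(w-w_c)f\,dxdw=0$. Next I would substitute the kinetic equation into the first summand. The transport term $\int|w-w_c|^2\hat v(w)\cdot\nabla_x f\,dxdw$ is zero since $|w-w_c(t)|^2$ is $x$-independent and the boundary conditions are periodic. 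For the remaining term I integrate by parts in $w$, using the rapid decay of $f$ in $w$, to get
$$\frac{d\mathcal{E}^w}{dt} = 2\int_{\mathbb{T}^3\times\mathbb{R}^3}(w-w_c)\cdot F[f,u]f\,dxdw = 2\int(w-w_c)\cdot L[f]f\,dxdw + 2\int(w-w_c)\cdot(u-w)f\,dxdw.$$
The second piece is precisely the friction term appearing on the right-hand side of the lemma; it remains to bound the $L[f]$ contribution from above by $-2C(c,W_0)\phi(\sqrt 3)\mathcal{E}^w$.

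For the $L[f]$ term, I would use the standard symmetrization trick $(x,w)\leftrightarrow (x_*,w_*)$. Writing out $L[f]$ and averaging the two equivalent expressions reduces
$$2\int(w-w_c)\cdot L[f]f\,dxdw = -\int_{\mathbb{T}^6\times\mathbb{R}^6}\phi(|x-x_*|)(w-w_*)\cdot(\hat v(w)-\hat v(w_*))ff_*\,dxdwdx_*dw_*.$$
Here the crucial input is the last proposition of Subsection 2.3, which gives $(v-v_*)\cdot(w-w_*)\geq\frac{c^2+1}{c^2}|v-v_*|^2$. Combined with the uniform bound $\Omega_w(f,t)\subset B_{W_0}(0)$, Proposition \ref{prop2.4} tells us that the eigenvalues $\lambda_1,\lambda_2$ of $\nabla_v\hat w$ are bounded above by a constant $\Lambda(c,W_0)$ on $\hat v(B_{W_0}(0))$, so $|w-w_*|\leq\Lambda(c,W_0)|v-v_*|$. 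Putting these together yields $(w-w_*)\cdot(\hat v(w)-\hat v(w_*))\geq\frac{c^2+1}{c^2\Lambda(c,W_0)^2}|w-w_*|^2=:C(c,W_0)|w-w_*|^2$.

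Finally, since $\mathbb{T}^3=[0,1]^3$ has diameter $\sqrt 3$ and $\phi$ is monotone on $\mathbb{R}_+$ (so that $\phi(|x-x_*|)\geq\phi(\sqrt 3)$ uniformly in $x,x_*\in\mathbb{T}^3$), we obtain
$$2\int(w-w_c)\cdot L[f]f\,dxdw\leq -C(c,W_0)\phi(\sqrt 3)\int_{\mathbb{T}^6\times\mathbb{R}^6}|w-w_*|^2 ff_*\,dxdwdx_*dw_*.$$
Expanding the double integral and using $\int f\,dxdw=1$ and $\int wf\,dxdw=w_c$ gives the identity $\int|w-w_*|^2 ff_*=2\mathcal{E}^w(t)$, which delivers exactly the factor $-2C(c,W_0)\phi(\sqrt 3)\mathcal{E}^w(t)$. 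The main obstacle in this plan is pinning down the constant $C(c,W_0)$: one has to use the support confinement hypothesis in a quantitative way to keep $\hat v$ bi-Lipschitz on $B_{W_0}(0)$, since in the absence of a uniform bound on the relativistic velocity the eigenvalues of $\nabla_v\hat w$ may blow up as $|v|\to c$.
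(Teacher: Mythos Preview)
Your proposal is correct and follows essentially the same route as the paper: differentiate $\mathcal{E}^w$, kill the $\dot w_c$ term via $\int(w-w_c)f=0$, integrate by parts to get $2\int(w-w_c)\cdot(L[f]+F_d)f$, symmetrize the $L[f]$ piece to $-\int\phi\,(w-w_*)\cdot(\hat v(w)-\hat v(w_*))ff_*$, and then combine the lower bound $(v-v_*)\cdot(w-w_*)\ge\frac{c^2+1}{c^2}|v-v_*|^2$ with the Lipschitz bound $|w-w_*|\le\sup\lambda_2\cdot|v-v_*|$ on the confined support, finally using $\phi(|x-x_*|)\ge\phi(\sqrt3)$ and the identity $\iint|w-w_*|^2ff_*=2\mathcal{E}^w$. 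The only cosmetic difference is that the paper first drops $w_c$ from the $L[f]$ integral (using $\int L[f]f=0$) before symmetrizing, whereas you symmetrize $(w-w_c)$ directly; the outcomes coincide.
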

	\begin{proof}
		By direct calculation, we have
		\begin{align}
			\frac{\mathrm{d}\mathcal{E}^w(t)}{\mathrm{d}t}&=-2w_c(t)\cdot\int_{\mathbb{T}^3 \times \mathbb{R}^3}(w-w_c(t))f(t,x,w)\mathrm{d}x\mathrm{d}w+\int_{\mathbb{T}^3 \times \mathbb{R}^3}|w-w_c(t)|^2\partial_tf(t,x,w)\mathrm{d}x\mathrm{d}w\nonumber\\
			&=:\mathcal{I}_1(t)+\mathcal{I}_2(t).
		\end{align}
		To estimate $\mathcal{I}_1$, we use the definition of $w_c(t)$ and unit mass assumption to obtain 
		\begin{equation}
			\int_{\mathbb{T}^3 \times \mathbb{R}^3}(w-w_c(t))f(t,x,w)\mathrm{d}x\mathrm{d}w=0.
		\end{equation}
		Therefore, we have $\mathcal{I}_1(t)=0$.
		
		To estimate $\mathcal{I}_2$, we use
		\begin{equation}
			\partial_t f=-\hat{v}(w) \cdot \nabla_x f-\nabla_w \cdot [(L[f]+F_d)f],
		\end{equation}
		and integration by parts to find
		\begin{align}
			\mathcal{I}_2(t)&=\int_{\mathbb{T}^3 \times \mathbb{R}^3}|w-w_c(t)|^2\partial_tf\mathrm{d}x\mathrm{d}w\nonumber\\
			&=2\int_{\mathbb{T}^3 \times \mathbb{R}^3}(w-w_c(t))\cdot [(L[f]+F_d)f]\mathrm{d}x\mathrm{d}w\nonumber\\
			&=2\int_{\mathbb{T}^6 \times \mathbb{R}^6}\phi(|x-x_*|)(w-w_c(t))\cdot (\hat{v}(w_*)-\hat{v}(w))f(t,x_*,w_*)f(t,x,w)\mathrm{d}x\mathrm{d}w \mathrm{d}x_*\mathrm{d}w_*\nonumber\\
			&\quad +2\int_{\mathbb{T}^3 \times \mathbb{R}^3}(w-w_c(t))\cdot (u-w)f\mathrm{d}x\mathrm{d}w  \nonumber\\
			&=:\mathcal{I}_{21}(t)+\mathcal{I}_{22}(t)
		\end{align}
		Since for $\forall w_1, w_2 \in B_{W_0}(0)$, the map $\hat{w}:B_c(0)\rightarrow \mathbb{R}^n$ satisfies
		\begin{equation}
			|w_1-w_2|=|\hat{w}(v_1)-\hat{w}(v_2)|\leq |\nabla_v \hat{w}|_{\text{op}}|v_1-v_2|
		\end{equation}
		where $|\nabla_v \hat{w}|_{\text{op}}$ denotes the operator norm of the matrix $\nabla_v \hat{w}$. Since $\nabla_v \hat{w}$ is asymmetric positive definite matrix,the operator norm is
		nothing but its largest eigenvalue, which is $\lambda_2(v)$ in our case. Therefore, one has
		\begin{equation}
			|w_1-w_2|=|\hat{w}(v_1)-\hat{w}(v_2)|\leq |\nabla_v \hat{w}|_{\text{op}}|v_1-v_2|\leq \left(\sup\limits_{v\in B_{\hat{v}(W_0)}(0)}\lambda_2(v)\right)|v_1-v_2|=:C_0(c,W_0)|v_1-v_2|.
		\end{equation}
		The first term $\mathcal{I}_{21}$ can be preliminarily estimated as follows:
		\begin{align}
			\mathcal{I}_{21}(t)&=2\int_{\mathbb{T}^6 \times \mathbb{R}^6}\phi(|x-x_*|)(w-w_c(t))\cdot (\hat{v}(w_*)-\hat{v}(w))f(t,x_*,w_*)f(t,x,w)\mathrm{d}x\mathrm{d}w \mathrm{d}x_*\mathrm{d}w_*\nonumber\\
			&=2\int_{\mathbb{T}^6 \times \mathbb{R}^6}\phi(|x-x_*|)w\cdot (\hat{v}(w_*)-\hat{v}(w))f(t,x_*,w_*)f(t,x,w)\mathrm{d}x\mathrm{d}w \mathrm{d}x_*\mathrm{d}w_*\nonumber\\
			&=-2\int_{\mathbb{T}^6 \times \mathbb{R}^6}\phi(|x-x_*|)w_*\cdot (\hat{v}(w_*)-\hat{v}(w))f(t,x_*,w_*)f(t,x,w)\mathrm{d}x\mathrm{d}w \mathrm{d}x_*\mathrm{d}w_*\nonumber\\
			&\quad \text{by } (x,w)\leftrightarrow (x_*,w_*)\nonumber\\
			&=-\int_{\mathbb{T}^6 \times \mathbb{R}^6}\phi(|x-x_*|)(w_*-w)\cdot (\hat{v}(w_*)-\hat{v}(w))f(t,x_*,w_*)f(t,x,w)\mathrm{d}x\mathrm{d}w \mathrm{d}x_*\mathrm{d}w_*\nonumber\\
			&\leq -\phi(\sqrt{3})\int_{\mathbb{T}^6 \times \mathbb{R}^6}\frac{c^2+1}{c^2}|\hat{v}(w_*)-\hat{v}(w)|^2f(t,x_*,w_*)f(t,x,w)\mathrm{d}x\mathrm{d}w \mathrm{d}x_*\mathrm{d}w_*\nonumber\\
			&\leq -\phi(\sqrt{3})\int_{\mathbb{T}^6 \times \mathbb{R}^6}\frac{c^2+1}{c^2[C_0(c,W_0)]^2}|w_*-w|^2f(t,x_*,w_*)f(t,x,w)\mathrm{d}x\mathrm{d}w \mathrm{d}x_*\mathrm{d}w_*
		\end{align}
		By applying the definition of $w_c(t)$, we have
		\begin{equation}
			\int_{\mathbb{T}^6 \times \mathbb{R}^6}(w_*-w_c)\cdot(w-w_c)f(t,x_*,w_*)f(t,x,w)\mathrm{d}x\mathrm{d}w \mathrm{d}x_*\mathrm{d}w_*=\left|\int_{\mathbb{T}^3\times\mathbb{R}^3}(w-w_c)f(t,x,w)\mathrm{d}x\mathrm{d}w\right|^2=0.
		\end{equation}
		Then, by applying the triangle equality
		$|w_*-w|^2=|w_*-w_c|^2+|w-w_c|^2+2(w_*-w_c)\cdot(w-w_c)$, the first term $\mathcal{I}_{21}$ can be finally estimated as follows:
		\begin{align}\label{eq7}
			\mathcal{I}_{21}(t)&\leq -\phi(\sqrt{3})\int_{\mathbb{T}^6 \times \mathbb{R}^6}\frac{c^2+1}{c^2[C_0(c,W_0)]^2}(|w_*-w_c|^2+|w-w_c|^2)f(t,x_*,w_*)f(t,x,w)\mathrm{d}x\mathrm{d}w \mathrm{d}x_*\mathrm{d}w_*\nonumber\\
			&=-\frac{2(c^2+1)}{c^2[C_0(c,W_0)]^2}\phi(\sqrt{3})\mathcal{E}^w(t)=:-2C(c,W_0)\phi(\sqrt{3})\mathcal{E}^w(t).
		\end{align}
		
		On the other hand, we split term $\mathcal{I}_{22}$ as two terms:
		\begin{equation}\label{eq8}
			\mathcal{I}_{22}=2\int_{\mathbb{T}^3 \times \mathbb{R}^3}(w-w_c(t))\cdot (u-w)f\mathrm{d}x\mathrm{d}w=2\int_{\mathbb{T}^3 \times \mathbb{R}^3}w\cdot (u-w)f\mathrm{d}x\mathrm{d}w-2\int_{\mathbb{T}^3 \times \mathbb{R}^3}w_c(t)\cdot (u-w)f\mathrm{d}x\mathrm{d}w.
		\end{equation}
		
		We finally combine \eqref{eq7} and \eqref{eq8} to obtain the desired result.
	\end{proof}
	\begin{lem}[Time-variation estimate for $\mathcal{L}$]\label{lem3.2}
		Let $[f (t, x, w),u(t,x)]$ be a smooth solution to \eqref{eq1} satisfying 
		\begin{equation}
			\int_{\mathbb{T}^3 \times \mathbb{R}^3}f^{\text{in}}(x,w)\mathrm{d}x\mathrm{d}w=1,\quad \bigcup\limits_{t\geq 0} \Omega_w (f,t)\subset B_{W_0}(0)\quad \text{for some} \quad W_0>0,
		\end{equation}
		and $f(t,x,w)$ decays to zero sufficiently fast in the $w$-variable. Then, we have
		\begin{equation}
			\frac{\mathrm{d}}{\mathrm{d}t}\mathcal{L}(t)+2\mathcal{D}(t) \leq 0 
		\end{equation}
		where $\mathcal{D}(t)$ is given by
		\begin{equation}
			\mathcal{D}(t):=C(c,W_0)\phi(\sqrt{3})\mathcal{E}^w(t)+\mu\int_{\mathbb{T}^3}|\nabla u|^2\mathrm{d}x+\int_{\mathbb{T}^3 \times \mathbb{R}^3}|u-w|^2f\mathrm{d}x\mathrm{d}w. 
		\end{equation}
	\end{lem}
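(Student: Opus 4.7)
The plan is to decompose $\mathcal{L}$ into its three pieces $\mathcal{E}^w,\mathcal{E}^u,\mathcal{E}^r$, differentiate each one separately, and then show that the drag cross terms, which are the only "bad" terms, telescope into $-2\int_{\mathbb{T}^3\times\mathbb{R}^3}|u-w|^2 f\,\mathrm{d}x\mathrm{d}w$ after a simple algebraic identity. The dissipative contributions from the alignment kernel and from viscosity will be produced directly by Lemma \ref{lem3.1} and by the energy identity \eqref{eq6}, respectively, so no further estimation of those terms is needed.

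For $\mathcal{E}^w$, I would simply quote Lemma \ref{lem3.1}, which already supplies $-2C(c,W_0)\phi(\sqrt{3})\mathcal{E}^w(t)$ plus the drag remainder $2\int(w-w_c)\cdot(u-w)f\,\mathrm{d}x\mathrm{d}w$. For $\mathcal{E}^u(t)=\|u\|_{L^2}^2-|u_c|^2$ (using $|\mathbb{T}^3|=1$), I would use \eqref{eq6} for $\tfrac{\mathrm{d}}{\mathrm{d}t}\|u\|_{L^2}^2$ and \eqref{eq4} for $\tfrac{\mathrm{d}}{\mathrm{d}t}u_c$, then recombine to obtain
\begin{equation*}
\frac{\mathrm{d}\mathcal{E}^u}{\mathrm{d}t}=-2\mu\int_{\mathbb{T}^3}|\nabla u|^2\mathrm{d}x-2\int_{\mathbb{T}^3\times\mathbb{R}^3}(u-u_c)\cdot(u-w)f\,\mathrm{d}x\mathrm{d}w.
\end{equation*}
For $\mathcal{E}^r$, I would use momentum conservation (Lemma \ref{lem2.1}(2)) together with \eqref{eq3} and \eqref{eq4} to get $\tfrac{\mathrm{d}}{\mathrm{d}t}(u_c-w_c)=-2\int(u-w)f\,\mathrm{d}x\mathrm{d}w$, and hence
\begin{equation*}
\frac{\mathrm{d}\mathcal{E}^r}{\mathrm{d}t}=-2(u_c-w_c)\cdot\int_{\mathbb{T}^3\times\mathbb{R}^3}(u-w)f\,\mathrm{d}x\mathrm{d}w.
\end{equation*}

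The key final step is to add the three drag cross terms. Since $(u_c-w_c)$ is constant in $(x,w)$ it can be pulled inside the integral, and then the integrand becomes
\begin{equation*}
2\bigl[(w-w_c)-(u-u_c)-(u_c-w_c)\bigr]\cdot(u-w)\,f=2(w-u)\cdot(u-w)\,f=-2|u-w|^2 f,
\end{equation*}
using the trivial identity $(w-w_c)-(u-u_c)-(u_c-w_c)=w-u$. Summing everything produces exactly $-2\mathcal{D}(t)$, which is the claimed inequality.

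I do not expect a genuine obstacle here: Lemma \ref{lem3.1} has already absorbed the only difficulty (the alignment-force term, which required the relativistic Lipschitz bound via $\lambda_2$ and the lower bound $(v_1-v_2)\cdot(w_1-w_2)\ge \tfrac{c^2+1}{c^2}|v_1-v_2|^2$). The only thing to be careful about is bookkeeping of the constants $u_c(t)$ and $w_c(t)$ when pulling them in and out of the spatial integrals, and the use of $\nabla\cdot u=0$ together with periodicity to justify integration by parts when differentiating $\|u\|_{L^2}^2$. Once those are in place, the cancellation is purely algebraic.
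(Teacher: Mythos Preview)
Your proposal is correct and follows essentially the same route as the paper: differentiate $\mathcal{E}^w,\mathcal{E}^u,\mathcal{E}^r$ separately (quoting Lemma~\ref{lem3.1} for the first, using the Navier--Stokes energy identity and \eqref{eq4} for the second, and momentum conservation for the third), then observe that the three drag cross terms telescope to $-2\int|u-w|^2f\,\mathrm{d}x\mathrm{d}w$. The only cosmetic difference is that the paper differentiates $\int|u-u_c|^2\mathrm{d}x$ directly and uses $\int(u-u_c)\,\mathrm{d}x=0$ to kill the $u_c'$ term, whereas you rewrite $\mathcal{E}^u=\|u\|_{L^2}^2-|u_c|^2$ first and invoke \eqref{eq6} and \eqref{eq4}; both computations are equivalent and land on the same expression for $\tfrac{\mathrm{d}}{\mathrm{d}t}\mathcal{E}^u$.
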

	\begin{proof}
		By direct calculation, we have
		\begin{equation}
			\frac{\mathrm{d}\mathcal{E}^u}{\mathrm{d}t}=2\int_{\mathbb{T}^3}(u-u_c)\cdot u_t\mathrm{d}x-2\int_{\mathbb{T}^3}(u-u_c)\cdot u_c'\mathrm{d}x=:2(\mathcal{J}_1+\mathcal{J}_2).
		\end{equation}
		By using the Navier-Stokes part of \eqref{eq1}, $\mathcal{J}_1$ can be calculated as 
		\begin{align}\label{eq9}
			\mathcal{J}_1&=-\int_{\mathbb{T}^3}(u\cdot \nabla)u \cdot(u-u_c) \mathrm{d}x-\int_{\mathbb{T}^3}\nabla p \cdot(u-u_c) \mathrm{d}x+\mu \int_{\mathbb{T}^3}\Delta u \cdot(u-u_c) \mathrm{d}x-\int_{\mathbb{T}^3\times \mathbb{R}^3}(u-w) \cdot(u-u_c) \mathrm{d}x\mathrm{d}w \nonumber\\
			&=:\mathcal{J}_{11}+\mathcal{J}_{12}+\mathcal{J}_{13}+\mathcal{J}_{14}=0+0-\mu\int_{\mathbb{T}^3}|\nabla u|^2\mathrm{d}x-\int_{\mathbb{T}^3\times \mathbb{R}^3}(u-w) \cdot(u-u_c)f \mathrm{d}x\mathrm{d}w.
		\end{align}
		By using the definition of $u_c$, $\mathcal{J}_2$ can be calculated as 
		\begin{equation}\label{eq10}
			\mathcal{J}_2=-2u_c' \cdot \int_{\mathbb{T}^3}(u-u_c)\mathrm{d}x=0.
		\end{equation}
		Therefore, we have
		\begin{equation}
			\frac{\mathrm{d}\mathcal{E}^u}{\mathrm{d}t}=-2\mu\int_{\mathbb{T}^3}|\nabla u|^2\mathrm{d}x-2\int_{\mathbb{T}^3\times \mathbb{R}^3}(u-w) \cdot(u-u_c)f \mathrm{d}x\mathrm{d}w.
		\end{equation}
		On the other hand, it follows from the conservation of momentum law in Lemma \ref{lem2.1} that $u_c=-w_c$, so we have
		\begin{equation}
			\frac{\mathrm{d}\mathcal{E}^r}{\mathrm{d}t}=(u_c-w_c)\cdot\frac{\mathrm{d}}{\mathrm{d}t}(u_c-w_c)=-2(u_c-w_c)\cdot \frac{\mathrm{d}w_c}{\mathrm{d}t}=-2\int_{\mathbb{T}^3\times \mathbb{R}^3}(u_c-w_c)\cdot (u-w)f\mathrm{d}x\mathrm{d}w
		\end{equation}
		
		Finally, by recalling the estimate of $\mathcal{E}^w$ in Lemma \ref{lem3.1}, we obtain
		\begin{equation}
			\frac{\mathrm{d}}{\mathrm{d}t}\mathcal{L}(t)+2C(c,W_0)\phi(\sqrt{3})\mathcal{E}^w(t)+2\mu\int_{\mathbb{T}^3}|\nabla u|^2\mathrm{d}x+2\int_{\mathbb{T}^3 \times \mathbb{R}^3}|u-w|^2f\mathrm{d}x\mathrm{d}w \leq 0.
		\end{equation}
		We reach the desired result.
	\end{proof}
	\begin{lem}[Time-variation estimate for $\mathcal{D}$]\label{lem3.3}
		Let $[f (t, x, w),u(t,x)]$ be a smooth solution to \eqref{eq1} satisfying
		
		(1)\quad $\displaystyle{\int_{\mathbb{T}^3 \times \mathbb{R}^3}f^{\text{in}}(x,w)\mathrm{d}x\mathrm{d}w=1,\quad \bigcup\limits_{t\geq 0} \Omega_w (f,t)\subset B_{W_0}(0)\quad \text{for some} \quad W_0>0}$.
		
		(2)\quad $\displaystyle{||\rho_f||_{L^\infty\left(\mathbb{R}_+\times \mathbb{T}^3\right)}<\infty,\quad \text{where} \quad \rho_f(t,x):=\int_{\mathbb{R}^3}f(t,x,w)\mathrm{d}w}$ and $f(t,x,w)$ decay to zero sufficiently fast in the $w$-variable. Then, we have
		\begin{equation}
			\frac{1}{2}\mathcal{L}(t)\leq C(\rho_f,\mu)\mathcal{D}(t) 
		\end{equation}
	\end{lem}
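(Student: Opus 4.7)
The plan is to bound each of the three summands in $\mathcal{L}(t)=\mathcal{E}^w(t)+\mathcal{E}^u(t)+\mathcal{E}^r(t)$ by a constant multiple of (some part of) $\mathcal{D}(t)$, so the conclusion follows by summation. The piece $\mathcal{E}^w$ is immediate, since $C(c,W_0)\phi(\sqrt{3})\mathcal{E}^w(t)$ already appears as one of the three terms comprising $\mathcal{D}(t)$, giving $\mathcal{E}^w(t)\leq [C(c,W_0)\phi(\sqrt{3})]^{-1}\mathcal{D}(t)$.

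For $\mathcal{E}^u(t)=\|u(t)-u_c(t)\|_{L^2(\mathbb{T}^3)}^2$, I would invoke the Poincar\'e inequality on the unit torus: because $u_c(t)=\int_{\mathbb{T}^3}u\,\mathrm{d}x$ is exactly the spatial mean of $u$, one has $\|u-u_c\|_{L^2}^2\leq C_P\|\nabla u\|_{L^2}^2$, and therefore $\mathcal{E}^u(t)\leq (C_P/\mu)\cdot\mu\|\nabla u\|_{L^2}^2\leq (C_P/\mu)\,\mathcal{D}(t)$.

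The main step is bounding $\mathcal{E}^r(t)=\tfrac{1}{2}|u_c-w_c|^2$. Here I would exploit the two normalizations $\int_{\mathbb{T}^3}\mathrm{d}x=1$ and $\int_{\mathbb{T}^3\times\mathbb{R}^3}f\,\mathrm{d}x\,\mathrm{d}w=1$ by inserting and subtracting $\int_{\mathbb{T}^3}u\,\rho_f\,\mathrm{d}x$ to obtain the algebraic decomposition
$$
u_c-w_c=\int_{\mathbb{T}^3}u\,(1-\rho_f)\,\mathrm{d}x+\int_{\mathbb{T}^3\times\mathbb{R}^3}(u-w)f\,\mathrm{d}x\,\mathrm{d}w.
$$
In the first integral, since $\int_{\mathbb{T}^3}(1-\rho_f)\,\mathrm{d}x=0$ I may replace $u$ by $u-u_c$ without changing the value; Cauchy--Schwarz together with $\|1-\rho_f\|_{L^2}\leq 1+\|\rho_f\|_{L^\infty}$ then yields $\bigl|\int u(1-\rho_f)\,\mathrm{d}x\bigr|^2\leq (1+\|\rho_f\|_{L^\infty})^2\|u-u_c\|_{L^2}^2$, which the Poincar\'e inequality once more controls by a constant times $\|\nabla u\|_{L^2}^2$. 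For the second integral, Cauchy--Schwarz combined with the unit-mass assumption gives $\bigl|\int(u-w)f\,\mathrm{d}x\,\mathrm{d}w\bigr|^2\leq \int|u-w|^2f\,\mathrm{d}x\,\mathrm{d}w$. Applying $(a+b)^2\leq 2a^2+2b^2$ merges these into $\mathcal{E}^r(t)\leq C(\rho_f,\mu)\,\mathcal{D}(t)$.

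Summing the three estimates and enlarging the generic constant to absorb the factor $\tfrac12$ on the left yields the claim $\tfrac{1}{2}\mathcal{L}(t)\leq C(\rho_f,\mu)\,\mathcal{D}(t)$. No deep obstacle is expected; the only mildly subtle ingredient is recognizing the algebraic splitting of $u_c-w_c$ that causes the $L^\infty$ bound on $\rho_f$ to enter only as a multiplicative constant, sparing one the Gagliardo--Nirenberg--Sobolev route that would be required in the weaker $L^{3/2}$ setting alluded to in Remark~1.2.
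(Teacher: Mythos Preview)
Your proof is correct, but it takes a different route from the paper's. The paper does not bound the three summands of $\mathcal{L}$ separately; instead it writes $u-w=(u-u_c)+(u_c-w_c)+(w_c-w)$ inside the drag integral $\int_{\mathbb{T}^3\times\mathbb{R}^3}|u-w|^2f\,\mathrm{d}x\,\mathrm{d}w$, expands, and applies Young's inequality with a parameter $\varepsilon$ (set to $1/2$) to extract the lower bound
\[
\tfrac12|u_c-w_c|^2+\tfrac12\!\int|w_c-w|^2f\,\mathrm{d}x\,\mathrm{d}w \;\leq\; \int\rho_f|u-u_c|^2\,\mathrm{d}x+\int|u-w|^2f\,\mathrm{d}x\,\mathrm{d}w,
\]
then adds $\tfrac12\mathcal{E}^u$ and applies Poincar\'e. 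Your algebraic splitting $u_c-w_c=\int u(1-\rho_f)\,\mathrm{d}x+\int(u-w)f\,\mathrm{d}x\,\mathrm{d}w$, combined with the mean-zero trick on $1-\rho_f$, is a cleaner and more modular way to isolate $\mathcal{E}^r$; it makes the dependence of the constant on $\|\rho_f\|_{L^\infty}$ completely explicit, whereas the paper's Young-inequality expansion handles $\mathcal{E}^r$ and $\mathcal{E}^w$ simultaneously at the cost of being slightly less transparent. Both arguments are of the same elementary nature and yield constants of the same form.
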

	\begin{proof}
		By direct calculation and using Young's inequality, for $\forall \varepsilon>0$, we have
		\begin{align}
			&\int_{\mathbb{T}^3 \times \mathbb{R}^3}|u-w|^2f\mathrm{d}x\mathrm{d}w
			=\int_{\mathbb{T}^3 \times \mathbb{R}^3}|u-u_c+u_c-w_c+w_c-w|^2f\mathrm{d}x\mathrm{d}w\nonumber\\
			&=|u_c-w_c|^2+\int_{\mathbb{T}^3 \times \mathbb{R}^3}|u-u_c|^2f\mathrm{d}x\mathrm{d}w+\int_{\mathbb{T}^3 \times \mathbb{R}^3}|w_c-w|^2f\mathrm{d}x\mathrm{d}w+2\int_{\mathbb{T}^3 \times \mathbb{R}^3}(u-u_c)\cdot(u_c-w)f\mathrm{d}x\mathrm{d}w\nonumber\\
			&\geq |u_c-w_c|^2+\left(1-\frac{1}{\varepsilon}\right)\int_{\mathbb{T}^3 \times \mathbb{R}^3}|u-u_c|^2f\mathrm{d}x\mathrm{d}w+\int_{\mathbb{T}^3 \times \mathbb{R}^3}|w_c-w|^2f\mathrm{d}x\mathrm{d}w-\varepsilon\int_{\mathbb{T}^3 \times \mathbb{R}^3}|u_c-w|^2f \mathrm{d}x\mathrm{d}w\nonumber\\
			&=(1-\varepsilon)|u_c-w_c|^2+\left(1-\frac{1}{\varepsilon}\right)\int_{\mathbb{T}^3 \times \mathbb{R}^3}|u-u_c|^2f\mathrm{d}x\mathrm{d}w+(1-\varepsilon)\int_{\mathbb{T}^3 \times \mathbb{R}^3}|w_c-w|^2f \mathrm{d}x\mathrm{d}w.
		\end{align}
		We set $\displaystyle{\varepsilon=\frac{1}{2}}$ to obtain
		\begin{equation}
			\frac{1}{2}|u_c-w_c|^2+\frac{1}{2}\int_{\mathbb{T}^3 \times \mathbb{R}^3}|w_c-w|^2f \mathrm{d}x\mathrm{d}w\leq \int_{\mathbb{T}^3}\rho_f(t,x)|u-u_c|^2\mathrm{d}x+\int_{\mathbb{T}^3 \times \mathbb{R}^3}|u-w|^2f\mathrm{d}x\mathrm{d}w.
		\end{equation}
		Subsequently, we have
		\begin{align}
			\frac{1}{2}\mathcal{L}(t)&\leq \int_{\mathbb{T}^3}(\frac{1}{2}+\rho_f)|u-u_c|^2\mathrm{d}x+\int_{\mathbb{T}^3 \times \mathbb{R}^3}|u-w|^2f\mathrm{d}x\mathrm{d}w+C(c,W_0)\phi(\sqrt{3})\mathcal{E}^w(t)\nonumber\\
			&\leq \left(\frac{1}{2}+||\rho_f||_{L^\infty\left(\mathbb{R}_+\times \mathbb{T}^3\right)}\right)\int_{\mathbb{T}^3}|u-u_c|^2\mathrm{d}x+\int_{\mathbb{T}^3 \times \mathbb{R}^3}|u-w|^2f\mathrm{d}x\mathrm{d}w+C(c,W_0)\phi(\sqrt{3})\mathcal{E}^w(t)\nonumber\\
			&\leq C(\rho_f)\int_{\mathbb{T}^3}|\nabla u|^2\mathrm{d}x+\int_{\mathbb{T}^3 \times \mathbb{R}^3}|u-w|^2f\mathrm{d}x\mathrm{d}w+C(c,W_0)\phi(\sqrt{3})\mathcal{E}^w(t)\nonumber\\
			&\leq \max\left\{1,\frac{C(\rho_f)}{\mu}\right\}\mathcal{D}(t)=:C(\rho_f,\mu)\mathcal{D}(t),
		\end{align}
		where we have used the $Poincar\acute{e}'s$ inequality
		\begin{equation}
			\int_{\mathbb{T}^3}|u-u_c|^2\mathrm{d}x\leq C_P\int_{\mathbb{T}^3}|\nabla u|^2\mathrm{d}x.
		\end{equation}
		Here $C_P$ is the $Poincar\acute{e}'s$ constant.
	\end{proof}
	\begin{prof1}
		By recalling Lemma \ref{lem3.2} and Lemma \ref{lem3.3}, we have
		\begin{equation}
			\frac{\mathrm{d}}{\mathrm{d}t}\mathcal{L}(t)+2\mathcal{D}(t) \leq 0, \quad 
			\frac{1}{2}\mathcal{L}(t)\leq C(\rho_f,\mu)\mathcal{D}(t) .
		\end{equation}
		Therefore, we can easily obtain that
		\begin{equation}
			\frac{\mathrm{d}}{\mathrm{d}t}\mathcal{L}(t)+\frac{1}{C(\rho_f,\mu)}\mathcal{L}(t)\leq \frac{\mathrm{d}}{\mathrm{d}t}\mathcal{L}(t)+2\mathcal{D}(t) \leq 0.
		\end{equation}
		By applying $Gr\ddot{o}nwall's$ inequality, we reach the conclusion
		\begin{equation}
			\mathcal{L}(t)\leq \mathcal{L}(0)\exp\{-\frac{1}{C(\rho_f,\mu)}t\}.
		\end{equation}
	\end{prof1}
	\begin{rem}
		From Theorem \ref{thm1.1}, we know that $\mathcal{E}^w(t)\leq \mathcal{L}(t)\rightarrow 0$. This denotes the formation of velocity alignment in probability. It can be seen from the Chebyshev inequality as follows. For any $\sigma>0$, we have
		$$\begin{aligned}
			\mathcal{E}^w(t) &=\int_{\mathbb{T}^{3}\times\mathbb{R}^{3}}f|w_{c}(t)-w|^{2}\mathrm{d}x\mathrm{d}w\geq\int_{\{|w_{c}(t)-w|>\sigma\}}f|w_{c}(t)-w|^{2}\mathrm{d}x\mathrm{d}w \\
			& \geq\sigma^{2}\int_{\{|w_{c}(t)-w|>\sigma\}}f\mathrm{d}x\mathrm{d}w=\sigma^{2}\mathbb{P}[|w_{c}(t)-w|>\sigma]. \end{aligned}$$
		This implies the weak flocking estimate,
		$$\lim_{t\to\infty}\mathbb{P}[|w_c(t)-w|>\sigma]\leq\frac{1}{\sigma^2}\lim_{t\to\infty}\mathcal{E}^w(t)=0.$$
	\end{rem}
	\section{A global existence of weak solutions}
	In this section, we will construct the form of weak solutions for the RCS-NS system and prove their existence. Fortunately, Reference \cite{ref29} has already provided the form of weak solutions for the coupling structure of Vlasov-type equations and incompressible Navier-Stokes equations and provided ideas and techniques for proving the existence of weak solutions. Therefore, the work in this section will draw on the ideas of Reference \cite{ref29} and, if necessary, directly use the estimates from \cite{ref29}. We hope that the readers have some understanding of the content of \cite{ref29} before reading this section.
	
	First, we introduce some function spaces as follows:
	\begin{equation}
		\mathcal{H}:=\{w\in L^2(\mathbb{T}^3)\mid\nabla\cdot w=0\},\quad\mathcal{V}:=\{w\in H^1(\mathbb{T}^3)\mid\nabla\cdot w=0\},\nonumber
	\end{equation}
	and we denote the dual space of $\mathcal{V}$ by $\mathcal{V}^\prime.$ Next, without loss of generality, we set the viscosity $\mu=1$  and define the concept of the weak solutions to system \eqref{eq1} as follows.\\
	\begin{df}\label{df4.1}
		For a given $T\in(0,\infty)$, the pair $(f,u)$ is a weak solution of \eqref{eq1} on the time interval $[0,T)$ if and only if the pair satisfies the following conditions:\\
		(1) $f\in L^{\infty }( 0, T; ( L^{\infty }\cap L_{+ }^{1}) ( \mathbb{T} ^{3}\times \mathbb{R} ^{3}) )$, $| w | ^{2}f\in L^{\infty }( 0, T; L^{1}( \mathbb{T} ^{3}\times \mathbb{R} ^{3}) ) .$\\
		(2) $u\in L^{\infty }( 0, T; \mathcal{H} ) \cap L^{2}( 0, T; \mathcal{V} ) \cap \mathcal{C} ^{0}( [ 0, T] , \mathcal{V} ^{\prime }) .$\\
		(3) For all $\varphi\in\mathcal{C}^1([0,T]\times\mathbb{T}^3\times\mathbb{R}^3)$ with compact support in $w$, such that
		$\varphi(T,\cdot,\cdot)=0$,
		$$-\int_0^T\int_{\mathbb{T}^3\times\mathbb{R}^3}f\left(\partial_t\varphi+\hat{v}(w)\cdot\nabla\varphi+F[f,u]\cdot\nabla_w\varphi\right)\mathrm{d}x\mathrm{d}w\mathrm{d}s=\int_{\mathbb{T}^3\times\mathbb{R}^3}f^\mathrm{in}\varphi(0,\cdot,\cdot)\:\mathrm{d}x\mathrm{d}w.$$\\
		(4) For all $\psi\in\mathcal{C}^1([0,T]\times\mathbb{T}^3)$ such that $\nabla\cdot\psi=0$,for a.e. $t \in [0,T]$,
		$$\begin{aligned}\int_{\mathbb{T}^3}u(t)\cdot\psi(t)\mathrm{d}x&+\int_0^t\int_{\mathbb{T}^3}\left(-u\cdot\partial_t\psi+(u\cdot\nabla)u\cdot\psi+\nabla u:\nabla\psi\right)\mathrm{d}x\mathrm{d}s\\&=-\int_0^t\int_{\mathbb{T}^3\times\mathbb{R}^3}(u-w)\cdot\psi f\:\mathrm{d}x\mathrm{d}w\mathrm{d}s+\int_{\mathbb{T}^3}u^\mathrm{in}\cdot\psi(\cdot,0)\:\mathrm{d}x.\end{aligned}$$
	\end{df}
	Now, we are ready to state our first main result on the global existence of weak solutions as follows.
	\subsection{A brief introduction to a regularized system}
	Let $\varepsilon>0$ and $m$ be a standard mollifier:
	\begin{equation}
		m \geq 0, \quad m \in C_c^\infty(\mathbb{R}^3) ,\quad \mathrm{supp~}m\subset B_1(0),\quad \int_{\mathbb{R}^3}m(x)\mathrm{d}x=1.
	\end{equation}
	The existence of $m$ can be ensured. For example, $m$ can be chosen as 
	\begin{align}
		m\left( x \right) =\begin{cases}
			C\exp \left( \frac{1}{\left| x \right|^2-1} \right) ,  \text{ if}\,\,\left| x \right|<1\\
			0, \text{ if}\,\,\left| x \right|\ge 1\\
		\end{cases},\quad \text{ where }C:=\left(\int_{B_1(0)}\exp \left( \frac{1}{\left| x \right|^2-1} \right) \mathrm{d}x\right)^{-1}.
	\end{align}
	and we set a family of smooth mollifiers:
	\begin{equation}
		m_\varepsilon(x):=\frac{1}{\varepsilon^3}m(\frac{x}{\varepsilon}),\;\varepsilon>0.
	\end{equation}
	Furthermore, we also introduce a family of smooth cut-off functions $\gamma_\varepsilon(x):$
	\begin{equation}
		\mathrm{supp~}\gamma_{\varepsilon}\subset B_{{\frac{1}{\varepsilon}}}(0),\quad0\leq \gamma_{\varepsilon}\leq 1,\quad\gamma_{\varepsilon}=1\mathrm{~on~}B_{{\frac{1}{2\varepsilon}}}(0),\quad\gamma_{\varepsilon}\to1\mathrm{~as~}\varepsilon\to0.
	\end{equation}
	Then we have the regularized system as follows:
	\begin{align}\label{eq4.5}
		\left\{
		\begin{aligned}
			&\partial_t f_\varepsilon+\hat{v}(w) \cdot \nabla_x f_\varepsilon+\nabla_w \cdot (F[f_\varepsilon,u_\varepsilon]f_\varepsilon)=0,\\
			&\partial_t u_\varepsilon+((m_\varepsilon \ast u_\varepsilon)\cdot\nabla)u_\varepsilon+\nabla p_\varepsilon-\mu \Delta u_\varepsilon=-\int_{\mathbb{R}^3}F_d[u_\varepsilon]f_\varepsilon \gamma_\varepsilon(w) \mathrm{d}w,\\
			&\nabla\cdot u_\varepsilon=0, (t,x,w)\in \mathbb{R}\times\mathbb{T}^3\times\mathbb{R}^3,
		\end{aligned}
		\right.
	\end{align}
	subject to regularized initial data:
	\begin{equation}
		f_\varepsilon(0,x,w)=f_\varepsilon^\mathrm{in}(x,w),\quad u_\varepsilon(0,x)=u_\varepsilon^\mathrm{in}(x),
	\end{equation}
	where $\ast$ denotes the convolution with respect to the spatial variable $x$, and
	\begin{equation}
		F[f_\varepsilon,u_\varepsilon](t,x,w)=F[f_\varepsilon](t,x,w)+m_\varepsilon*u_\varepsilon(t,x)-w,
	\end{equation}
	and $f_\varepsilon^\mathrm{in}$ and $u_\varepsilon^\mathrm{in}$ are $C^\infty$ approximations of $f^\mathrm{in}$ and $u^\mathrm{in}$ such that $\displaystyle{\int_{\mathbb{T}^3\times\mathbb{R}^3}f_\varepsilon^\mathrm{in}}$ d$z=1$ for any $\varepsilon>0$, $f_\varepsilon^\mathrm{in}$ converges to $f^\mathrm{in}$ strongly in $L^p(\mathbb{T}^3\times\mathbb{R}^3)$ for all $p\in[1,\infty)$, and weakly $\operatorname{in}L^\infty(\mathbb{T}^3\times\mathbb{R}^3)$, $M_2f_\varepsilon^\mathrm{in}$ is uniformly bounded with respect to $\varepsilon$ and converges strongly towards $M_2f^\mathrm{in}$ in $L^\infty(\mathbb{T}^3)$ and $(u_\varepsilon^\mathrm{in})$ strongly converges to $u^\mathrm{in}$ in $L^2(\mathbb{T}^3).$ Furthermore, we assume that $f_\varepsilon^\mathrm{in}$ has a compact support in $w$.
	
	Similarly to Definition \ref{df4.1}, we define the weak solution of the regularization system \eqref{eq4.5} as follows:
	\begin{df}\label{df4.2}
		For a given $T\in(0,\infty)$, the pair $(f_\varepsilon,u_\varepsilon)$ is a weak solution of \eqref{eq1} on the time interval $[0,T)$ if and only if the pair satisfies the following conditions:
		
		(1) $f_\varepsilon\in L^\infty(0,T;L^\infty(\mathbb{T}^3\times\mathbb{R}^3))$,
		
		(2) $u_\varepsilon\in \left(L^2(0,T;H^1(\mathbb{T}^3))\cap L^\infty(0,T;L^2(\mathbb{T}^3))\right)$,
		
		(3)  $\partial_tu_\varepsilon\in L^2(0,T;L^2(\mathbb{T}^3))$,
		
		(4) For all $\varphi\in\mathcal{C}^1([0,T]\times\mathbb{T}^3\times\mathbb{R}^3)$,  for a.e. $t\in[0,T]$,
		\begin{align}
			&\int_{\mathbb{T}^3\times\mathbb{R}^3}f_\varepsilon^\mathrm{in}(t)\varphi(t,\cdot,\cdot)\:\mathrm{d}x\mathrm{d}w-\int_0^t\int_{\mathbb{T}^3\times\mathbb{R}^3}f_\varepsilon\left(\partial_t\varphi+\hat{v}(w)\cdot\nabla\varphi+F[f_\varepsilon,u_\varepsilon]\cdot\nabla_w\varphi\right)\mathrm{d}x\mathrm{d}w\mathrm{d}s\nonumber\\
			&=\int_{\mathbb{T}^3\times\mathbb{R}^3}f_\varepsilon^\mathrm{in}\varphi(0,\cdot,\cdot)\:\mathrm{d}x\mathrm{d}w\nonumber,
		\end{align}
		
		(5) For all $\psi\in\mathcal{C}^1(\mathbb{T}^3\times[0,T])$ such that $\nabla\cdot\psi=0$,for a.e. $t \in [0,T]$,
		$$\begin{aligned}\int_0^t\int_{\mathbb{T}^3}\partial_tu_\varepsilon(t)\cdot\psi(t)\mathrm{d}x&+\int_0^t\int_{\mathbb{T}^3}\left((m_\varepsilon\ast u_\varepsilon\cdot\nabla)u\cdot\psi+\nabla u_\varepsilon:\nabla\psi\right)\mathrm{d}x\mathrm{d}s\\&=-\int_0^t\int_{\mathbb{T}^3\times\mathbb{R}^3}(u_\varepsilon-w)\cdot\psi f\gamma_\varepsilon(w)\:\mathrm{d}x\mathrm{d}w\mathrm{d}s.\end{aligned}$$
	\end{df}
	
	To solve the regularized system, we construct a sequence of approximate solutions $\{[f_\varepsilon^n,u_\varepsilon^n]\}_{n=1}^\infty$ for the regularized system inductively, as we discussed. Here, we omit $\varepsilon$- dependence of $f_\varepsilon^n$ and $u_\varepsilon^n$ for simplicity:
	\begin{equation}
		f^n:=f_\varepsilon^n,\quad u^n:=u_\varepsilon^n.\nonumber
	\end{equation}
	
	$\bullet$ (Initial step) We set $u^1:=u_\varepsilon^\mathrm{in}$ and solve the following Relativistic-Cucker-Smale part:
	$$\partial_tf^1+\hat{v}(w)\cdot\nabla f^1+\nabla_w\cdot(F[f^1,u^1]f^1)=0,\quad(x,w)\in\mathbb{T}^3\times\mathbb{R}^3,$$
	subject to the initial data:
	
	$$f^1(0,x,w):=f_\varepsilon^{\mathrm{in}}(x,w).$$
	
	$\bullet$ (Induction step) Suppose we have approximate solutions $\{[f^k,u^k]\}_{k=1}^n.$ Then with$[f^n,u^n]$,
	we solve the Navier-Stokes equation with drag force:
	\begin{align}\label{eq4.8}
		&\partial_tu^{n+1}+(m_\varepsilon*u^{n+1}\cdot\nabla)u^{n+1}+\nabla p^{n+1}-\Delta u^{n+1}=-\int_{\mathbb{R}^3}f^n(u^n-w)\gamma_\varepsilon(w)\mathrm{d}w,\nonumber\\
		&\nabla\cdot u^{n+1}=0,
	\end{align}
	subject to the initial data
	$$u^{n+1}(x,0):=u_\varepsilon^{\mathrm{in}}(x).$$
	With $u^{n+1}$, we again solve the following Relativistic-Cucker-Smale part:
	\begin{equation}\label{eq4.9}
		\partial f^{n+1}+\hat{v}(w)\cdot\nabla f^{n+1}+\nabla_{w}\cdot(F[f^{n+1},u^{n+1}]f^{n+1})=0,\quad(x,w)\in\mathbb{T}^3\times\mathbb{R}^3,
	\end{equation}
	subject to the initial data
	$$f^{n+1}(0,x,w):=f_\varepsilon^{\mathrm{in}}(x,w).$$
	
	Thus, we can construct $[f^{n+1},u^{n+1}]$ from $[f^n,u^n].$ 
	\subsection{The solvability of the regularized system}
	For this iteration to be well-defined, we need to check the solvability of \eqref{eq4.8} and \eqref{eq4.9}. Note that the existence of solution for the Navier-Stokes part \eqref{eq1} can be obtained by using the same argument as in \cite{ref29}. For the Relativistic-Cucker-Smale part \eqref{eq4.8}, we present its solvability at first. More precisely, for a given bulk velocity field $u\in L^\infty(0,T;L^2(\mathbb{T}^3))\cap L^2(0,T;H^1(\mathbb{T}^3))$, we show the global existence of $W^{1,\infty}$ solutions to \eqref{eq4.9} with regularized initial data:
	\begin{equation}\label{syt2}
		\begin{cases}
			\partial_tf+\hat{v}(w)\cdot\nabla f+\nabla_w\cdot(F[f,u]f)=0,\quad(x,w)\in\mathbb{T}^3\times\mathbb{R}^3,\\f(0,x,w):=f_\varepsilon^{\text{in}}(x,w).
		\end{cases}
	\end{equation}
	
	We use the method of characteristics to prove the existence of classical solution and $W^{1,\infty}$- estimates. For this, we first introduce forward characteristics $(x(t),w(t)):=\left(x(t;0,x,w,\theta),w(t;0,x,w,\theta),\theta(t;0,x,w,\theta)\right)$ to be the solution of the following ODEs:
	\begin{equation}\label{ode1}
		\begin{cases}
			\displaystyle{\frac{\mathrm{d}x(t)}{\mathrm{d}t}=\hat{v}(w(t))=:v(t),\quad t>0},\\
			\displaystyle{\frac{\mathrm{d}w(t)}{\mathrm{d}t}=L[f](t,x(t),w(t))+(m_\varepsilon\ast u)(t,x(t))-w(t)},\\
		\end{cases}
	\end{equation}
	subject to initial data:
	$$(x(0),w(0))=(x,w).$$
	For notational simplicity, we set $z(t):=(x(t),w(t)).$ Then, along the characteristics, we
	have
	$$\begin{aligned}\frac{\mathrm{d}}{\mathrm{d}t}f(z(t),t)&=\partial_t f(z(t),t)+\hat{v}(w(t))\cdot\nabla f(z(t),t)+\frac{\mathrm{d}w(t)}{\mathrm{d}t}\cdot\nabla_w f(z(t),t)\\&=-\nabla_w\cdot \left(L[f](t,x(t),w(t))+(m_\varepsilon\ast u)(t,x(t))-w(t)\right)\\&=\left(3+\int_{\mathbb{T}^3\times\mathbb{R}^3}\phi(|x(t)-x_*|)(\nabla_w\cdot v(t))f(t,z_*)\:\mathrm{d}x_*\mathrm{d}w_*\right)f(z(t),t).\end{aligned}$$
	We integrate the above characteristic equation to find
	\begin{equation}\label{eq4.12}
		f(t,z(t))=f_\varepsilon^{\text{in}}(z)\exp\left(3\left(t+\int_0^t\int_{\mathbb{T}^3\times\mathbb{R}^3}\phi(|x(t)-x_*|)(\nabla_w\cdot v(t))f(t,z_*)\:\mathrm{d}x_*\mathrm{d}w_*\right)\right).
	\end{equation}
	
	\begin{lem}\label{lem4.1}
		Let $f$ be a solution to \eqref{eq4.5} which decays to zero sufficiently fast in the w-variable. We set $\displaystyle{M_0(0)=1}$. Then, we have
		\begin{equation}
			M_0(t)=1,\quad \left|M_1(t)\right|\leq \sqrt{M_2(0)}+\|u\|_{L^\infty(0,T;L^2(\mathbb{T}^3))}, \quad\text{and}\quad M_2(t)\leq (\sqrt{M_2(0)}+\|u\|_{L^\infty(0,T;L^2(\mathbb{T}^3))})^2.\nonumber
		\end{equation}
	\end{lem}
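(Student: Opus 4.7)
The plan is to derive the three bounds in sequence: first the mass conservation $M_0(t) = 1$, then the second-moment estimate for $M_2(t)$ via a Gr\"onwall-type argument on $\sqrt{M_2}$, after which the first-moment bound follows immediately by Cauchy-Schwarz.

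For $M_0$, I would integrate the Vlasov equation of \eqref{eq4.5} over $\mathbb{T}^3 \times \mathbb{R}^3$. The transport term contributes zero after integration by parts in $x$ thanks to periodicity, while the $\nabla_w$-divergence term vanishes because of the sufficient decay of $f$ in $w$; this is exactly the calculation already performed in Lemma \ref{lem2.1}(1). Since $M_0(0) = 1$, we obtain $M_0(t) = 1$ for all $t \in [0,T]$.

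For $M_2$, I would multiply the Vlasov equation by $|w|^2$ and integrate by parts in $w$, giving
\[ \frac{d}{dt} M_2(t) = 2\int_{\mathbb{T}^3 \times \mathbb{R}^3} w \cdot L[f]\, f\, dx\, dw + 2\int w \cdot (m_\varepsilon \ast u - w) f\, dx\, dw. \]
The first integral is non-positive: after the relabeling $(x,w) \leftrightarrow (x_*,w_*)$ it equals $-\int\!\int \phi(|x-x_*|)(w - w_*) \cdot (\hat v(w) - \hat v(w_*)) f f_*\, dx\, dw\, dx_*\, dw_*$, and the positivity $(w-w_*)\cdot(\hat v(w)-\hat v(w_*)) \ge 0$ is a direct consequence of the last proposition of Section~2.3. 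Discarding this dissipative contribution leaves
\[ \frac{d}{dt} M_2 + 2 M_2 \le 2 \int w \cdot (m_\varepsilon \ast u)\, f\, dx\, dw. \]
Applying Cauchy-Schwarz on the probability measure $f\, dx\, dw$ (legitimate since $M_0 = 1$), together with the symmetry of the mollifier $m_\varepsilon$ and Young's convolution inequality $\|m_\varepsilon\|_{L^1} = 1$, the right-hand side can be controlled by $2\sqrt{M_2(t)}\, \|u(t)\|_{L^2(\mathbb{T}^3)}$. Dividing by $2\sqrt{M_2}$ yields $\tfrac{d}{dt}\sqrt{M_2} + \sqrt{M_2} \le \|u(t)\|_{L^2}$, and Gr\"onwall's inequality delivers $\sqrt{M_2(t)} \le e^{-t}\sqrt{M_2(0)} + \int_0^t e^{-(t-s)} \|u(s)\|_{L^2}\, ds \le \sqrt{M_2(0)} + \|u\|_{L^\infty(0,T;L^2(\mathbb{T}^3))}$, whose square is the stated bound on $M_2$. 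For $M_1$, Cauchy-Schwarz on the probability measure $f\, dx\, dw$ then gives
\[ |M_1(t)|^2 = \Big|\int w f\, dx\, dw\Big|^2 \le M_0(t)\, M_2(t) = M_2(t), \]
so $|M_1(t)| \le \sqrt{M_2(t)} \le \sqrt{M_2(0)} + \|u\|_{L^\infty(0,T;L^2(\mathbb{T}^3))}$.

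The main obstacle I anticipate is the Cauchy-Schwarz step that reduces $\int w \cdot (m_\varepsilon \ast u)\, f\, dx\, dw = \int (m_\varepsilon \ast u) \cdot J_f\, dx$, with $J_f(x) := \int w f\, dw$, to the clean product $\sqrt{M_2}\, \|u\|_{L^2}$. A naive Cauchy-Schwarz in the measure $f\, dx\, dw$ produces $\sqrt{M_2}\,(\int |m_\varepsilon \ast u|^2 \rho_f\, dx)^{1/2}$ whose second factor carries a pointwise dependence on $\rho_f$; shifting the convolution onto the current density via $\int (m_\varepsilon \ast u) \cdot J_f\, dx = \int u \cdot (m_\varepsilon \ast J_f)\, dx$ and applying $\|m_\varepsilon \ast J_f\|_{L^2} \le \|J_f\|_{L^2}$ reduces the issue to controlling $\|J_f\|_{L^2}$ in terms of $\sqrt{M_2}$, using the pointwise estimate $|J_f(x)|^2 \le \rho_f(x)\, m_2 f(x)$ together with the mass normalization $M_0 = 1$. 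Once this delicate step is carried out, the remainder of the proof is a short Gr\"onwall computation.
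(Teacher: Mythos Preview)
Your proposal follows the paper's proof essentially step for step: mass conservation via Lemma~\ref{lem2.1}, a differential inequality for $M_2$ obtained by discarding the non-positive alignment contribution and estimating the drag term, Gr\"onwall applied to $g(t)=\sqrt{M_2(t)}$, and finally $|M_1|\le\sqrt{M_0}\sqrt{M_2}=\sqrt{M_2}$ by Cauchy--Schwarz. The only point of divergence is the drag bound you flag as an obstacle: the paper does not use Cauchy--Schwarz in the measure $f\,\mathrm{d}x\,\mathrm{d}w$ but instead writes directly $\int w\cdot(m_\varepsilon\ast u)f\,\mathrm{d}x\,\mathrm{d}w\le\|u\|_{L^\infty(0,T;L^2)}\int|w|f\,\mathrm{d}x\,\mathrm{d}w$, implicitly invoking the pointwise mollifier estimate $|m_\varepsilon\ast u(x)|\le\|m_\varepsilon\|_{L^2}\|u\|_{L^2}$ and suppressing the resulting $\varepsilon$-dependent constant. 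Since Lemma~\ref{lem4.1} is used only in Lemma~\ref{lem4.2}, where $C(\varepsilon)$ factors already appear, this shortcut is harmless for the paper's purposes; your more careful route through $\|J_f\|_{L^2}$ runs into the same $\rho_f$-dependence and would likewise need either an $\varepsilon$-dependent or a $\|\rho_f\|_{L^\infty}$-dependent constant to close.
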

	\begin{proof}
		(1) From the law of conservation of mass in Lemma \ref{lem2.1}, we know that $$M_0(t)=M_0(0)=1.$$
		(2) A straightforward computation yields
		\begin{align}\label{est1}
			\frac{\mathrm{d}}{\mathrm{d}t}M_2(t)&=-\int_{\mathbb{T}^3\times\mathbb{R}^3}|w|^2\nabla_w\cdot (F[f,u]f)\mathrm{d}x\mathrm{d}w\nonumber\\    &=2\int_{\mathbb{T}^3\times\mathbb{R}^3}w\cdot(L[f]f)\mathrm{d}x\mathrm{d}w+2\int_{\mathbb{T}^3\times\mathbb{R}^3}w\cdot(F_df)\mathrm{d}x\mathrm{d}w\nonumber\\
			&=-\int_{\mathbb{T}^{6}\times\mathbb{R}^{6}}\phi(|x-x_*|)(w-w_*)\cdot(\hat{v}(w)-\hat{v}(w_*)f(t,x_*,w_*)f(t,x,w)\mathrm{d}x\mathrm{d}w\mathrm{d}x_*\mathrm{d}w_*\nonumber\\
			&\quad+2\int_{\mathbb{T}^3\times\mathbb{R}^3}w\cdot(m_\varepsilon \ast u-w)f\mathrm{d}w\mathrm{d}x\nonumber\\
			&\leq -\frac{c^2+1}{c^2}\int_{\mathbb{T}^{6}\times\mathbb{R}^{6}}\phi(|x-x_*|)|\hat{v}(w)-\hat{v}(w_*)|^2f(t,x_*,w_*)f(t,x,w)\mathrm{d}x\mathrm{d}w\mathrm{d}x_*\mathrm{d}w_*\nonumber\\
			&\quad+2\int_{\mathbb{T}^3\times\mathbb{R}^3}w\cdot(m_\varepsilon \ast u-w)f\mathrm{d}w\mathrm{d}x\nonumber\\
			&\leq 2\int_{\mathbb{T}^3\times\mathbb{R}^3}w\cdot(m_\varepsilon \ast u-w)f\mathrm{d}w\mathrm{d}x=2\int_{\mathbb{T}^3\times\mathbb{R}^3}w\cdot (m_\varepsilon \ast u) f\mathrm{d}w\mathrm{d}x-2M_2(t)\nonumber\\
			&\leq 2\|u\|_{L^\infty(0,T;L^2(\mathbb{T}^3))}\int_{\mathbb{T}^3\times\mathbb{R}^3}|w| f\mathrm{d}w\mathrm{d}x-2M_2(t)\nonumber\\
			&\leq 2\|u\|_{L^\infty(0,T;L^2(\mathbb{T}^3))}\sqrt{M_2(t)}-2M_2(t)
		\end{align}
		We set $g(t)=\sqrt{M_2(t)}$,then by applying $Gr\ddot{o}nwall's$ inequality, the estimate \eqref{est1} yields
		\begin{equation}
			\frac{\mathrm{d}}{\mathrm{d}t}g(t)\leq \|u\|_{L^\infty(0,T;L^2(\mathbb{T}^3))}-g(t) \Rightarrow |g(t)|^2=M_2(t)\leq (\sqrt{M_2(0)}+\|u\|_{L^\infty(0,T;L^2(\mathbb{T}^3))})^2
		\end{equation}
		(3)By the $H\ddot{o}lder$ inequality, we have
		\begin{align}
			|M_1(t)|&\leq \int_{\mathbb{T}^3\times\mathbb{R}^3}|w| f\mathrm{d}w\mathrm{d}x\leq \left(\int_{\mathbb{T}^3\times\mathbb{R}^3}|w|^2 f\mathrm{d}w\mathrm{d}x\right)^\frac{1}{2}\left(\int_{\mathbb{T}^3\times\mathbb{R}^3} f\mathrm{d}w\mathrm{d}x\right)^\frac{1}{2}\nonumber\\
			&=\sqrt{M_2(t)}\leq \sqrt{M_2(0)}+\|u\|_{L^\infty(0,T;L^2(\mathbb{T}^3))}
		\end{align}
	\end{proof}
	\begin{lem}\label{lem4.2}
		Let $z(t):=(x(t),w(t))$ be a particle trajectory starting from
		$(x,w)\in$supp$f^\mathrm{in}$. Then we have
		$$|w(t)|\leq C_W,\quad t\leq T,$$
		where $C_W$ is a constant which depends only on $\|u\|_{L^\infty(0,T;L^2(\mathbb{T}^3))},\phi,M_2(0),\Omega_w(f,0)$ and $T$.
	\end{lem}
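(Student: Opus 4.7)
The plan is to convert the characteristic ODE \eqref{ode1} into a scalar differential inequality for $|w(t)|^2$. Taking the Euclidean inner product of the $w$-equation in \eqref{ode1} with $w(t)$ produces
\[
\tfrac12\frac{\mathrm{d}}{\mathrm{d}t}|w(t)|^2 = w(t)\cdot L[f](t,x(t),w(t)) + w(t)\cdot (m_\varepsilon\ast u)(t,x(t)) - |w(t)|^2,
\]
so the last, dissipative, term is immediately available once the two drift terms are bounded in terms of $|w(t)|$ times data-dependent constants.

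For the alignment term I would use $|\hat{v}(w)|\le c$ pointwise together with the crude triangle bound
\[
|L[f](t,x,w)| \le \|\phi\|_\infty\bigl(c\,M_0(t) + M_1(t)\bigr),
\]
and then invoke Lemma \ref{lem4.1}, which gives $M_0(t)\equiv 1$ and $M_1(t)\le\sqrt{M_2(0)}+\|u\|_{L^\infty(0,T;L^2(\mathbb{T}^3))}$. This bounds $|L[f]|$ uniformly by a constant $A_1$ depending only on $\phi$, $c$, $M_2(0)$, and $\|u\|_{L^\infty(0,T;L^2(\mathbb{T}^3))}$. For the regularized bulk velocity, a Cauchy--Schwarz on the convolution integral gives
\[
|(m_\varepsilon\ast u)(t,x)| \le \|m_\varepsilon\|_{L^2(\mathbb{T}^3)}\,\|u(t)\|_{L^2(\mathbb{T}^3)} \le A_2,
\]
where $A_2$ depends on $\|u\|_{L^\infty(0,T;L^2)}$ (and on the fixed mollifier parameter $\varepsilon$).

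Combining these two estimates with Young's inequality $(A_1+A_2)|w|\le\tfrac12(A_1+A_2)^2+\tfrac12|w|^2$ collapses the identity to
\[
\frac{\mathrm{d}}{\mathrm{d}t}|w(t)|^2 + |w(t)|^2 \le (A_1+A_2)^2,
\]
so Grönwall yields $|w(t)|^2\le |w(0)|^2\,e^{-t}+(A_1+A_2)^2(1-e^{-t})$. Since $(x,w)\in\mathrm{supp}\,f^{\mathrm{in}}$ forces $w\in\Omega_w(f,0)$, the initial size $|w(0)|$ is controlled by $\Omega_w(f,0)$, and we conclude $|w(t)|\le C_W$ for all $t\le T$ with $C_W$ depending only on the quantities listed in the statement.

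The main delicate point is simply bookkeeping on which quantities the final constant depends on: one must route the control of $L[f]$ through $M_1$ rather than through $|\hat v(w(t))|$ alone so that Lemma \ref{lem4.1} can close the estimate, and one must verify that the dissipation $-|w(t)|^2$ beats the Young's-inequality term $\tfrac12|w|^2$. Neither is a real obstacle, since Lemma \ref{lem4.1} supplies precisely the $L^2$-moment bound required and the damping constant comes directly from the drag structure of \eqref{ode1}.
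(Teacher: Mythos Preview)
Your proof is correct and follows essentially the same strategy as the paper: use the drag term $-w$ in the characteristic ODE as the source of dissipation, bound the alignment force via Lemma~\ref{lem4.1} (the $M_1$ estimate) and the mollified bulk velocity via Cauchy--Schwarz, then close with Gr\"onwall. The only cosmetic differences are that the paper works component-wise with $|w^i(t)|$ and briefly isolates an additional (ultimately discarded) negative contribution from the $-\hat v(w)\!\int\!\phi f$ piece of $L[f]$, whereas you work directly with $|w(t)|^2$ and bound $|L[f]|$ as a whole; both routes lead to the same Gr\"onwall inequality and the same dependence of $C_W$ on the data (including, as in the paper, the implicit dependence on the mollification scale $\varepsilon$).
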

	\begin{proof}
		For simplicity, we note that $$F(v(t))=:F(t)\geq 1,\quad\text{and}\quad F(v_*)=:F_*\geq 1.$$
		From the characteristic equation \eqref{ode1}, we use $F(\cdot)\geq1$ to get
		\begin{align}\label{eq4.16}
			\frac{\mathrm{d}}{\mathrm{d}t}|w^i(s)|&=\left((L)^i[f](z(s),s)+(m_\varepsilon \ast u^i)(z(s),s)-w^i(s)\right)\mathrm{sgn}(w^i(s))\nonumber\\
			&\leq\left|\int_{\mathbb{T}^3\times\mathbb{R}^3}\phi(x(s)-x_*)\left(\frac{w_*}{F_*}\right)^if(z_*,s)\:\mathrm{d}x_*\mathrm{d}w_*\right|\nonumber\\
			&-\left(\frac{1}{F(s)}\int_{\mathbb{T}^3\times\mathbb{R}^3}\phi(x(s)-x_*)f(z_*,s)\:\mathrm{d}x_*\mathrm{d}w_*+1\right)|w^i(s)|+|(m_\varepsilon\ast u^i)(x(s),s)|\nonumber\\
			&\leq \frac{\phi(0)}{F_*}|M_1(s)|-|w^i(s)|+C(\varepsilon)\|u\|_{L^\infty(0,T,L^2(\mathbb{T}^3))}\nonumber\\
			&\leq\phi(0)\left(\left(\sqrt{M_2(0)}+\|u\|_{L^\infty(0,T;L^2(\mathbb{T}^2))}\right)-|w^i(s)|+C(\varepsilon)\|u\|_{L^\infty(0,T;L^2(\mathbb{T}^2))}\right.
		\end{align}
		for $i=1,2,3$.
		
		We apply $Gr\ddot{o}nwall's$ Lemma to the above relation to obtain
		$$|w^i(s)|\leq|w^i(0)|+\phi(0)\left(\sqrt{M_2(0)}+\|u\|_{L^\infty(0,T;L^2(\mathbb{T}^3))}\right)+C(\varepsilon)\|u\|_{L^\infty(0,T;L^2(\mathbb{T}^3))},$$which yields the desired estimate.
	\end{proof} 
	\begin{rem}\label{rem4.1}
		1. We define $\lambda_w(s)=\sup\limits_{w\in\Omega_w(f,s)}|w|$ and $\lambda_v(s)=\sup\limits_{w\in\Omega_w(f,s)}|\hat{v}(w)|$. Then, Lemma \ref{lem4.2} implies that $\lambda_w(s)\leq C_W$ and $\lambda_v(s)\leq g^{-1}(C_W) < c$ (light speed constant),
		where $C_W$ depends only on $\lambda_W(0)$, $\|u\|_{L^\infty(0,T;L^2(\mathbb{T}^3))}$, $\phi(0),M_0(0)$ and $M_2(0)$.
		
		2. Let $[x(s),w(s)]$ be the particle trajectories defined by \eqref{ode1}. In fact, the constant $C$ in
		Lemma \ref{lem4.2} does not depend on $n$ due to the uniform estimate \eqref{eq4.16}.
	\end{rem}
	We next present $L^\infty$-estimates for the derivatives of the force field $F$:
	\begin{equation}\label{eq13}
		F[f,u]=L[f](t,x,w)+(m_\varepsilon\ast u)(t,x)-w
	\end{equation}
	\begin{lem}\label{lem4.3}
		The regularized force term $F[f,u]$ satisfies
		
		$$\begin{aligned}&(1)\quad\|\nabla_{w}\cdot F\|_{L^{\infty}}\leq 3(\phi(0)+1),\\
			&(2)\quad\|\partial_{w_i} F\|_{L^\infty}\leq \sqrt{3}\phi(0)+1,\\
			&(3)\quad\|\nabla_{x}\cdot F\|_{L^{\infty}}\leq 6\|\phi^{\prime}\|_{L^\infty}\lambda_v(t)+3\|u\|_{L^\infty(0,T;L^2(\mathbb{T}^3))}\|\nabla m_\varepsilon\|_{L^1},\\
			&(4)\quad\|\nabla_{w}\cdot\partial_{x_i}F\|_{L^{\infty}}\leq 3\|\phi^{\prime}\|_{L^{\infty}}.\end{aligned}$$
	\end{lem}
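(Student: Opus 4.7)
The plan is to exploit the additive decomposition $F[f,u]=L[f]+(m_\varepsilon\ast u)(t,x)-w$ and estimate each piece separately, since the four derivatives $\nabla_w\cdot$, $\partial_{w_i}$, $\nabla_x\cdot$, $\nabla_w\cdot\partial_{x_i}$ act on different variables and annihilate most of the summands. All four bounds then reduce to elementary pointwise estimates driven by the Lorentz-transform propositions of Section~2.3 together with the conservation of mass $M_0(t)=1$ from Lemma~\ref{lem2.1}.

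For (1), the $w$-divergence annihilates $(m_\varepsilon\ast u)(t,x)$ and yields $-3$ from $-w$, while the kernel $\phi(|x-x_*|)$ and the argument $\hat v(w_*)$ in $L[f]$ are $w$-independent, so
\[\nabla_w\cdot L[f]=-\bigl(\nabla_w\cdot\hat v(w)\bigr)\int_{\mathbb{T}^3\times\mathbb{R}^3}\phi(|x-x_*|)f(t,x_*,w_*)\,\mathrm{d}x_*\mathrm{d}w_*.\]
The bound $\nabla_w\cdot\hat v<3$ from Section~2.3, together with $\phi(|x-x_*|)\le\phi(0)$ and $M_0=1$, yields (1). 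For (2), the same reduction produces $\partial_{w_i}F=-(\partial_{w_i}\hat v)\int\phi f-e_i$; the Jacobian $\nabla_w\hat v=(\nabla_v\hat w)^{-1}$ has eigenvalues $1/\lambda_1,1/\lambda_2\in(0,1]$ by Proposition~\ref{prop2.4}, so its Frobenius norm is at most $\sqrt 3$, hence $|\partial_{w_i}\hat v|\le\sqrt 3$ and the claim follows.

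For (3), since $w$ and $\hat v$ are $x$-independent, we split $\nabla_x\cdot F=\nabla_x\cdot L[f]+\nabla_x\cdot(m_\varepsilon\ast u)$. Differentiating $\phi(|x-x_*|)$ in $x$ produces $\phi'(|x-x_*|)\frac{x-x_*}{|x-x_*|}$, whose dot product with $\hat v(w)-\hat v(w_*)$ is controlled pointwise by $2\|\phi'\|_{L^\infty}\lambda_v(t)$ on the support of $f$; integrating against $f$ (using $M_0=1$) and summing over components yields the $6\|\phi'\|_{L^\infty}\lambda_v(t)$-contribution. For the mollified piece, $\nabla_x(m_\varepsilon\ast u)=(\nabla m_\varepsilon)\ast u$ and a Young-type convolution bound paired with the hypothesis $u\in L^\infty(0,T;L^2(\mathbb{T}^3))$ supplies the remaining $3\|u\|_{L^\infty(0,T;L^2)}\|\nabla m_\varepsilon\|_{L^1}$ term.

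Finally, (4) combines the computations of (1) and (3): the $\nabla_w\cdot$ kills both the $w$-independent $(m_\varepsilon\ast u)$-term and the linear $-w$-term, so only $L[f]$ survives, and commuting derivatives gives
\[\nabla_w\cdot\partial_{x_i}L[f]=-\int\partial_{x_i}\phi(|x-x_*|)\,(\nabla_w\cdot\hat v(w))\,f\,\mathrm{d}x_*\mathrm{d}w_*,\]
which is bounded pointwise by $3\|\phi'\|_{L^\infty}M_0=3\|\phi'\|_{L^\infty}$. The main obstacle is not analysis but careful bookkeeping: one must identify which factors of the integrand depend on $w$ versus $w_*$ and on $x$ versus $x_*$, so that the Jacobian bounds of Proposition~\ref{prop2.4} and the unit-mass normalization can be invoked without loss; once these pointwise observations are in place the four estimates are essentially one-line inequalities.
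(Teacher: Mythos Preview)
Your proposal is correct and follows essentially the same route as the paper: decompose $F=L[f]+(m_\varepsilon\ast u)-w$, note which summands each derivative annihilates, and reduce everything to the eigenvalue bounds on $\nabla_w\hat v$ from Proposition~\ref{prop2.4} together with $M_0=1$ and $\phi\le\phi(0)$. One small slip in your write-up of (4): it is $\partial_{x_i}$, not $\nabla_w\cdot$, that kills the $-w$ term (indeed $\nabla_w\cdot(-w)=-3$); since you apply $\partial_{x_i}$ first this does not affect the conclusion.
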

	\begin{proof}
		(1) We recall the fact $\nabla_w \cdot \hat{v}(w)=3-|O(c^{-2})|$. From Lemma \ref{lem4.2} and Remark \ref{rem4.1} we know that $\hat{v}(w)$ is bounded in $[0,T]$. Therefore, by direct calculation, we have
		$$\nabla_w\cdot F=\int_{\mathbb{T}^3\times\mathbb{R}^3}\phi(|x-x_*|)(\nabla_w \cdot \hat{v}(w))f\mathrm{d}x_*\mathrm{d}w_*-3.$$
		This yields
		$$\|\nabla_w\cdot F\|_{L^\infty}\leq (3-|O(c^{-2})|)\phi(0)+3\leq 3(\phi(0)+1).$$
		(2)We observe that
		$$|\partial_{{w_{i}}}\hat{v}(w)|\leq\|\nabla_{w}\hat{v}\|_{1}\leq\sqrt{3}\|\nabla_{w}\hat{v}\|_{op}=\sqrt{3}\lambda_{1}^{-1}\leq\sqrt{3}$$
		where $\displaystyle{\|A\|_{1}:=\max\limits_{1\leq j\leq 3}\sum_{i=1}^{3}|A_{ij}}|$ is an absolute column sum of the matrix.\\
		This yields
		$$\|\partial_{w_i} F\|_{L^\infty}\leq \sqrt{3}\phi(0)+1.$$
		(3) Again it follows from \eqref{eq13} that for $(x,w)\in\mathbb{T}^3\times\Omega_w(f,t)$,
		$$\nabla_x\cdot F=\sum_{i=1}^3\int_{\mathbb{T}^3\times\mathbb{R}^3}\phi^{\prime}(|x-x_*|)\frac{x^i-y^i}{|x-x_*|}([\hat{v}(w_*)]^i-[\hat{v}(w)]^i)f\mathrm{d}x_*\mathrm{d}w_*+\sum_{i=1}^3(\nabla m_\varepsilon)*u^i.$$
		This implies
		$$\|\nabla_x\cdot F\|_{L^\infty}\leq 6\|\phi^{\prime}\|_{L^\infty}\lambda_v(t)+3\|u\|_{L^\infty(0,T;L^2(\mathbb{T}^3))}\|\nabla m_\varepsilon\|_{L^1}.$$
		(4) Since
		$$\partial_{x_i}F=\int_{\mathbb{T}^3\times\mathbb{R}^3}\phi^{\prime}(|x-x_*|)\frac{x^i-x_*^i}{|x-x_*|}(\hat{v}(w_*)-\hat{v}(w))f\mathrm{d}x_*\mathrm{d}w_*+\nabla m_\varepsilon*u^i,$$
		we have
		$$\nabla_w\cdot\partial_{x_i}F=\int_{\mathbb{T}^3\times\mathbb{R}^3}\phi^{\prime}(|x-x_*|)\frac{x^i-x_*^i}{|x-x_*|}(-3+|O(c^{-2})|)f\mathrm{d}x_*\mathrm{d}w_*.$$
		This yields
		$$\|\nabla_w\cdot\partial_{x_i}F\|_{L^\infty}\leq 3\|\phi'\|_{L^\infty}.$$
	\end{proof}
	\begin{lem}
		Suppose that the initial datum $f^\mathrm{in}\in C^1(\mathbb{T}^{3}\times\mathbb{R}^3)$ has a compact support. Then, for any fixed $T\in(0,\infty),there$ exists a unique classical solution $f\in\mathcal{C}([0,T)\times\mathbb{T}^{3}\times\mathbb{R}^3)$ to \eqref{syt2}.
	\end{lem}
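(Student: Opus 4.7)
The plan is to prove existence and uniqueness of the classical solution to \eqref{syt2} via a Picard-type iteration combined with the method of characteristics already set up in \eqref{ode1}--\eqref{eq4.12}. Fix a bulk velocity field $u\in L^\infty(0,T;L^2(\mathbb{T}^3))\cap L^2(0,T;H^1(\mathbb{T}^3))$ as in the statement. Setting $f^{(0)}\equiv f_\varepsilon^{\text{in}}$, I would inductively define $f^{(k+1)}$ as the classical solution of the linear transport equation
\begin{equation*}
\partial_t f^{(k+1)} + \hat{v}(w)\cdot\nabla_x f^{(k+1)} + \nabla_w\cdot\bigl(F[f^{(k)},u]\,f^{(k+1)}\bigr) = 0, \qquad f^{(k+1)}(0,\cdot,\cdot)=f_\varepsilon^{\text{in}}.
\end{equation*}
For each frozen $f^{(k)}$, the force field $F[f^{(k)},u]=L[f^{(k)}]+m_\varepsilon\ast u-w$ is globally Lipschitz in $(x,w)$ thanks to the mollification of $u$ and the estimates in Lemma \ref{lem4.3}, so the corresponding characteristic system (analogous to \eqref{ode1}) generates a unique $C^1$ flow $Z^{(k)}(t;x,w)$, and the explicit representation \eqref{eq4.12} defines $f^{(k+1)}$ classically.

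Next I would derive uniform-in-$k$ a priori bounds on $[0,T]$. Lemma \ref{lem4.3}(1) gives $\|\nabla_w\cdot F[f^{(k)},u]\|_{L^\infty}\leq 3(\phi(0)+1)$, so \eqref{eq4.12} yields
\begin{equation*}
\|f^{(k+1)}\|_{L^\infty([0,T]\times\mathbb{T}^3\times\mathbb{R}^3)}\leq e^{3(\phi(0)+1)T}\|f_\varepsilon^{\text{in}}\|_{L^\infty},
\end{equation*}
independent of $k$. The $w$-support of $f^{(k+1)}$ is contained in $B_{C_W}(0)$ uniformly in $k$ by the argument of Lemma \ref{lem4.2} together with Remark \ref{rem4.1}, because the constant $C_W$ depends only on $\|u\|_{L^\infty(0,T;L^2(\mathbb{T}^3))}$, $\phi(0)$, $M_2(0)$ and $\Omega_w(f^{\text{in}},0)$, all of which are fixed. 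Differentiating \eqref{eq4.12} in $(x,w)$ and invoking the Lipschitz estimates for $\partial_{w_i}F$, $\nabla_x\cdot F$ and $\nabla_w\cdot\partial_{x_i}F$ from Lemma \ref{lem4.3}(2)--(4), a Gr\"onwall argument then produces a uniform $W^{1,\infty}$ bound for $f^{(k+1)}$ on $[0,T]$.

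The main obstacle is establishing the Cauchy property of $\{f^{(k)}\}$, since $L[f^{(k)}]$ depends nonlocally on $f^{(k)}$ and a small perturbation of $f^{(k)}$ perturbs the entire characteristic flow. I would compare the flows $Z^{(k)}$ and $Z^{(k-1)}$ by subtracting their ODE systems and using
\begin{equation*}
\|L[f^{(k)}]-L[f^{(k-1)}]\|_{L^\infty}\leq C(\phi,\lambda_v(T),C_W)\,\|f^{(k)}-f^{(k-1)}\|_{L^\infty},
\end{equation*}
which is valid because all iterates share the uniform compact $w$-support $B_{C_W}(0)$, combined with the Lipschitz continuity of the remaining part of $F$. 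Gr\"onwall then gives $\|Z^{(k)}-Z^{(k-1)}\|_{L^\infty([0,T_0])}\leq CT_0 e^{CT_0}\|f^{(k)}-f^{(k-1)}\|_{L^\infty}$; substituting this back into \eqref{eq4.12} and exploiting the uniform $W^{1,\infty}$ bound on $f^{(k)}$ yields a contraction
\begin{equation*}
\|f^{(k+1)}-f^{(k)}\|_{L^\infty([0,T_0]\times\mathbb{T}^3\times\mathbb{R}^3)}\leq \tfrac{1}{2}\|f^{(k)}-f^{(k-1)}\|_{L^\infty([0,T_0]\times\mathbb{T}^3\times\mathbb{R}^3)}
\end{equation*}
for $T_0>0$ small enough. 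Hence $\{f^{(k)}\}$ converges in $C^0$ on $[0,T_0]$ to a classical solution $f$ of \eqref{syt2}, and the uniform $W^{1,\infty}$ bound upgrades the limit to $C^1$. The identical Gr\"onwall estimate applied to the difference of two solutions yields uniqueness; since all a priori bounds are uniform on $[0,T]$, the local solution extends to the whole interval $[0,T)$ by a standard continuation argument.
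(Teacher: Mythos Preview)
Your proposal is correct and follows essentially the same route as the paper: successive approximations combined with the method of characteristics, uniform $W^{1,\infty}$ bounds via Lemma~\ref{lem4.3}, and a Gr\"onwall argument for both convergence of the iterates and uniqueness. The paper's proof is in fact sketchier than yours on the iteration and contraction steps (it simply declares that ``the existence part is based on the method of successive approximations'' and then concentrates on the a priori $W^{1,\infty}$ estimate); the only substantive detail the paper adds that you gloss over is the explicit bound on $\partial_{w_i}(\nabla_w\cdot\hat v(w))$, obtained by estimating $\partial\Gamma/\partial w_i$, which is needed when you differentiate \eqref{eq4.12} in $w$ but is not contained in Lemma~\ref{lem4.3}.
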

	\begin{proof}
		Since the existence part is based on the method of successive approximations, we only provide $W^{1,\infty}$-norm estimate in
		the sequel. A higher order $W^{k,\infty}$-norm estimate with $k\geq2$ can be done similarly. To this end, we define the transport-like operator $\mathcal{T}$ as
		$$\mathcal{T}:=\partial_t+\hat{v}(w)\cdot\nabla_x+L[f]\cdot\nabla_w.$$
		
		Then, we have
		\begin{align}
			&\mathcal{T}(f)=-(\nabla_w\cdot F)f,\nonumber\\
			&\mathcal{T}(\partial_{x_i}f)=-(\partial_{x_i}F)\cdot\nabla_wf-\partial_{x_i}(\nabla_w\cdot F)f-(\nabla_w\cdot F)(\partial_{x_i}f),\nonumber\\
			&\mathcal{T}(\partial_{w_i}f)=-(\partial_{w_i}\hat{v}(w))\cdot\nabla_xf-(\partial_{w_i}F)\cdot\nabla_wf-\partial_{w_i}(\nabla_w\cdot F)f-(\nabla_w\cdot F)(\partial_{w_i}f).\nonumber
		\end{align}
		From Lemma \ref{lem4.3}, it is easily to know that
		\begin{align}
			&|\mathcal{T}(f)|\leq C_1(\phi)\|f\|_{L^\infty} \\
			&|\mathcal{T}(\partial_{x_i}f)|\leq C_2(\phi,C_W,u,m_\varepsilon)(\|\nabla_w f\|_{L^\infty}+\|f\|_{L^\infty}+\|\nabla_x f\|_{L^\infty})
		\end{align}
		Moreover, observe that
		$$\partial_{w_i}\bigg(\nabla_w\cdot\hat{v}(w)\bigg)=-\frac{(d-1)\partial_{w_i}\lambda_1}{\lambda_1^2}-\frac{\partial_{w_i}\lambda_2}{\lambda_2^2}.$$
		However, since
		\begin{equation}\label{eq4.19}
			\left|\frac{\partial\Gamma}{\partial w_i}\right|=\frac{\Gamma|\upsilon_{i}|}{c^{2}\Gamma+2\Gamma^{2}-1}\leq\frac{\Gamma|\upsilon|}{c^{2}\Gamma+2\Gamma^{2}-1}=\frac{c\sqrt{\Gamma^{2}-1}}{c^{2}\Gamma+2\Gamma^{2}-1}\leq\frac{1}{c},
		\end{equation}
		the terms in \eqref{eq4.19} can be bounded as
		$$\left|\frac{\partial\lambda_1}{\partial w_i}\right|=\left|\frac{\partial F}{\partial w_i}\right|=\left(1+\frac{2\Gamma}{c^2}\right)\left|\frac{\partial\Gamma}{\partial w_i}\right|\leq\frac1c{\left(1+\frac{2\Gamma}{c^2}\right)}$$
		and
		$$\left|\frac{\partial\lambda_2}{\partial w_i}\right|\leq\left|\frac{\partial F}{\partial w_{i}}\right|+\left(3\Gamma^{2}-1+\frac{8\Gamma^{3}}{c^{2}}-\frac{4\Gamma}{c^{2}}\right)\left|\frac{\partial\Gamma}{\partial w_{i}}\right|\leq\frac{1}{c}\left(3\Gamma^{2}+\frac{8\Gamma^{3}}{c^{2}}-\frac{2\Gamma}{c^{2}}\right).$$
		Therefore, $\partial_{w_i}\lambda_k$ is bounded, then $\displaystyle{\partial_{w_i}\bigg(\nabla_w\cdot\hat{v}(w)\bigg)}$ is also bounded. Finally, $\partial_{w_i}(\nabla_w\cdot F)f$ is bounded and we have
		$$|\mathcal{T}(\partial_{w_{i}}f)|\leq C_3(\phi,C_W,u,m_\varepsilon)(\|f\|_{L^{\infty}}+\|\nabla_{x}f\|_{L^{\infty}}+\|\nabla_{w}f\|_{L^{\infty}}).$$
		Next, we introduce a functional $\mathcal{F}$,
		
		$$\mathcal{F}(t):=\|f(t,\cdot,\cdot)\|_{L^\infty}+\|\nabla_x f(t,\cdot,\cdot)\|_{L^\infty}+\|\nabla_w f(t,\cdot,\cdot)\|_{L^\infty}.$$
		
		Then, we combine all the estimates to conclude that $\mathcal{F}$ satisfies the Grönwall inequality,
		
		$$\frac{d\mathcal{F}}{\mathrm{d}t}\leq C(\phi,C_W,u,m_\varepsilon)\mathcal{F}(t),\quad t>0.$$
		
		This implies
		\begin{equation}\label{eq4.21}
			\mathcal{F}(t)\leq e^{CT}\mathcal{F}(0)\quad\mathrm{for}\quad0\leq t\leq T.
		\end{equation}
		This directly leads to the boundedness of the $W^{1,\infty}$-norm of $f$ in any finite time interval.
		
		For the uniqueness, since the operator $\mathcal{T}$ is linear, we can use a similar argument to derive a kind of Grönwall-type inequality \eqref{eq4.21} for the difference of two solutions with the same initial datum. Therefore, we can obtain the uniqueness.
	\end{proof}
	
	\subsection{Convergence of iteration}
	
	We next provide the uniform bound estimates of the approximate solutions to system \eqref{eq4.8}
	and\eqref{eq4.9}. For this, we consider the following forward characteristics:
	$$(x^{n+1}(s),w^{n+1}(s)):=(x^{n+1}(s;0,x,w),w^{n+1}(s;0,x,w)),$$
	which is a solution to the following integro-differential system:
	\begin{align}\label{eq4.22}
		&\frac{\mathrm{d}}{\mathrm{d}s}x^{n+1}(s)=\hat{v}(w^{n+1}(s)):=v^{n+1}(s),\quad s>0,\nonumber\\
		&\frac{\mathrm{d}}{\mathrm{d}s}w^{n+1}(s)=L[f^n](s,x^{n+1}(s),w^{n+1}(s))+(m_\varepsilon*u^n)(s,x^{n+1}(s))-w^{n+1}(s),\nonumber\\
	\end{align}
	with the initial data:
	$$\left.(x^{n+1},w^{n+1})\right|_{t=0}=(x,w)\quad\text{for all}\quad n\geq0.$$
	For the sake of notational simplicity, we set $z^{n+1}(s):=(x^{n+1}(s),w^{n+1}(s)).$
	
	In order to eliminate the dependence on n, we need to have the following uniform bound of the sequences $[f^n,u^n]$.
	
	\begin{lem}\label{lem4.5}
		For each $n\in \mathbb{N}$ , there exists a unique solution $[ f^n, u^n]$ of \eqref{eq4.8} and \eqref{eq4.9} in $\mathcal{C}^1([0,T];\mathcal{C}^1(\mathbb{T}^3\times\mathbb{R}^3))\times[(H^1(0,T;L^2(\mathbb{T}^3))\cap L^2(0,T;H^1(\mathbb{T}^3))]$ satisfying the following estimates:
		$$\begin{aligned}&(1)\quad\|f^n\|_{L^\infty(\mathbb{T}^3\times\mathbb{R}^3\times(0,T))}\leq C(T)\|f_{\mathrm{in},\varepsilon}\|_{L^\infty(\mathbb{T}^3\times\mathbb{R}^3)},\\&(2)\quad\|u^n\|_{L^\infty(0,T;L^2(\mathbb{T}^3))\cap L^2(0,T;H^1(\mathbb{T}^3))}\leq C(\varepsilon),\\&(3)\quad\|\partial_tu^{n+1}\|_{L^2(0,T;L^2(\mathbb{T}^3))}\leq C(\varepsilon),\end{aligned}$$
		where $C(\varepsilon)$ depends on $\varepsilon$ ut not on $n$, and the sequence $[f^n,u^n]$ strongly converges in
		$L^\infty ( \mathbb{T}^3\times \mathbb{R}^3\times(0,T)) \times L^\infty (0,T; L^2(\mathbb{T}^3)) $ towards weak formulation of \eqref{eq4.8} - \eqref{eq4.9}. 
	\end{lem}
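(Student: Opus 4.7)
The plan splits into three stages: establishing existence at each iteration step, extracting $n$-uniform bounds, and promoting those bounds to strong convergence via a Cauchy argument.

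\emph{Stage 1: existence of each iterate and uniform $L^\infty$ bound on $f^n$.} With $[f^{n-1},u^{n-1}]$ in hand, I would first obtain $u^{n+1}$ as a weak solution of \eqref{eq4.8} by the Galerkin/fixed-point procedure from \cite{ref29}; the mollification $m_\varepsilon\ast u^{n+1}$ makes the convective drift globally Lipschitz and the source term $-\int f^n(u^n-w)\gamma_\varepsilon\,\mathrm{d}w$ lies in $L^2_x$ with an explicit $\varepsilon$-dependent norm, so the standard Navier-Stokes theory applies. Given $u^{n+1}$, the previous lemma then provides a unique classical $f^{n+1}\in\mathcal{C}^1$. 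For the $L^\infty$ bound on $f^n$ I would apply the characteristic identity \eqref{eq4.12} along $z^n(t)$ and control the exponent using Lemma \ref{lem4.3}(1), whose bound $3(\phi(0)+1)$ is independent of $n$; this gives $\|f^n\|_{L^\infty}\le e^{3(\phi(0)+1)T}\|f^{\mathrm{in}}_\varepsilon\|_{L^\infty}$ as claimed.

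\emph{Stage 2: energy and time-derivative bounds on $u^n$.} Testing the $u^{n+1}$-equation against $u^{n+1}$ and using that $m_\varepsilon\ast u^{n+1}$ inherits $\nabla\cdot=0$ (so the trilinear term vanishes), then bounding the drag source by Cauchy-Schwarz with $|w|\le 1/\varepsilon$ on $\mathrm{supp}\,\gamma_\varepsilon$ and the $L^\infty$ bound from Stage 1, yields
$$\frac{d}{dt}\|u^{n+1}\|_{L^2}^2+\|\nabla u^{n+1}\|_{L^2}^2\le C(\varepsilon)\bigl(1+\|u^n\|_{L^2}^2+\|u^{n+1}\|_{L^2}^2\bigr),$$
so that induction together with the uniform bound on $u^\mathrm{in}_\varepsilon$ gives (2). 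For (3) I would test the Leray-projected equation against $\partial_t u^{n+1}$: the convective term is absorbed via $\|m_\varepsilon\ast u^{n+1}\|_{L^\infty}\le C(\varepsilon)\|u^{n+1}\|_{L^2}$, and after integrating in time bound (2) controls the remaining $H^1$ factor.

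\emph{Stage 3: Cauchy property and main obstacle.} Set $\tilde u^{n}:=u^{n+1}-u^n$ and $\tilde f^{n}:=f^{n+1}-f^n$. Testing the equation for $\tilde u^n$ against itself and using the $\varepsilon$-regularization to bound $(m_\varepsilon\ast\tilde u^n\cdot\nabla)u^n$ in $L^2_x$, I would obtain an inequality of the form
$$\|\tilde u^n(t)\|_{L^2}^2\le C(\varepsilon)\int_0^t\bigl(\|\tilde u^{n-1}\|_{L^2}^2+\|\tilde f^{n-1}\|_{L^\infty}^2+\|\tilde u^n\|_{L^2}^2\bigr)\mathrm{d}s.$$
For $\tilde f^n$ I would subtract the characteristic representations along $z^{n+1}$ and $z^n$ and use the Lipschitz estimates of $\hat v$ and $\nabla_w\cdot\hat v$ from Remark \ref{rem2.1} together with the uniform $w$-support bound from Lemma \ref{lem4.2} to derive an analogous integral inequality. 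Proposition \ref{prop2.1} (with $A=0$) then forces $\|\tilde u^n\|$ and $\|\tilde f^n\|$ to decay like $K^n t^n/n!$, giving strong convergence of the iterates in $L^\infty_{t,x,w}\times L^\infty_t L^2_x$, and passing to the limit in the weak formulations of \eqref{eq4.8}--\eqref{eq4.9} identifies the limit as a weak solution of the regularized system in the sense of Definition \ref{df4.2}. The main obstacle is precisely the comparison of the two Vlasov characteristics: because $F[f^n,u^n]$ depends nonlinearly on $f^n$ and on $u^n$ through a convolution, one must trade derivatives of $f^n$ for $L^\infty$ differences using the Lipschitz structure of $\hat v$ on bounded $w$-sets, which pins the entire argument on the uniform-in-$n$ support bound $C_W$ of Lemma \ref{lem4.2} and on the uniform force-field estimates of Lemma \ref{lem4.3}.
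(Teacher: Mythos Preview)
Your proposal is correct and follows the same strategy as the paper: existence of each iterate via standard Navier--Stokes theory plus the characteristic formula \eqref{eq4.12}, uniform bounds via energy estimates (which the paper simply defers to \cite{ref29}, Subsection 3.1.2), and strong convergence via a Cauchy argument closed by Proposition \ref{prop2.1} with $A=0$. The only organizational difference is that the paper's proof of this lemma stops after your Stages 1--2 and relegates your Stage 3 to the separate Lemmas \ref{lem4.6}--\ref{lem4.7}; the ingredients there---comparison of the two families of characteristics using the uniform $w$-support bound $C_W$ from Lemma \ref{lem4.2}/Remark \ref{rem4.2} and the Lipschitz structure of $\hat v$ and $\nabla_w\cdot\hat v$ from Remark \ref{rem2.1}---are exactly the ones you identify.
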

	\begin{proof}
		Assume that $[f^n,u^n]$ is given in $L^\infty((0,T)\times\mathbb{T}^3\times\mathbb{R}^3)\times L^\infty(0,T;L^2(\mathbb{T}^3)).$ Since we cut off the high particle velocities, the drag force $\displaystyle{\int_{\mathbb{R}^3}f^n\left(w-u^n\right)\gamma(w)\mathrm{d}w}$ at least belongs to $L^2(0,T;L^2(\mathbb{T}^3)).$ Applying standard results about the Navier-Stokes system (with a mollified convection term), one can prove the existence and uniqueness of $u^{n+1}\in L^\infty(0,T;L^2(\mathbb{T}^3))\cap L^2(0,T;H^1(\mathbb{T}^3))$ solving \eqref{eq4.5}$_2$.
		
		Note that the uniqueness relies on the fact that the convection velocity has been regularized. Moreover, since the convection velocity has been regularized and the initial velocity is smooth, one can check that $\partial_tu^n\in L^2(0,T;L^2(\mathbb{T}^3)).$ By a simple induction argument, these properties hold for all $n\in\mathbb{N}.$
		
		We have from \eqref{eq4.12} that for $\forall (t,x,w) \in (0,T)\times\mathbb{T}^3\times\mathbb{R}^3$,
		$$f^{n}(t,x,w)\leq\mathrm{e}^{3T(\|\phi\|_{\mathcal{C}^1}+1)}\|f_{\mathrm{in},\varepsilon}\|_{L^\infty(\mathbb{T}^3\times\mathbb{R}^3)},$$
		which implies (1). Then, we can derive the other uniform estimates in a similar way as in Subsection 3.1.2 in \cite{ref29}.
	\end{proof}
	\begin{rem}\label{rem4.2}
		We define $\lambda_w^n(s)=\sup\limits_{w\in\Omega_w(f^n,s)}|w|$ and $\lambda_v^n(s)=\sup\limits_{w\in\Omega_w(f^n,s)}|\hat{v}(w)|$. Then, Lemma \ref{lem4.2} and Lemma \ref{lem4.5} implies that $\lambda_w^n(s)\leq C_W$ and $\lambda_v^n(s)\leq g^{-1}(C_W) < c$ (light speed constant). Here, the meaning of $C_W$ is the same as that of Lemma \ref{lem4.2}, and it is independent of $t$ and $n$.
	\end{rem}
	Next, we show that the approximation sequence $[f^n,u^n]$  is a Cauchy sequence in $L^\infty([0,T]\times\mathbb{T}^3\times\mathbb{R}^3))\times L^\infty(0,T;L^2(\mathbb{T}^3))$.  For this, we set $\omega^{n+1}:=u^{n+1}-u^n$.
	\begin{lem}\label{lem4.6}
		For a given positive number $T$, let $[f^n,u^n]$ be the $n$-th iterate which is a solution to system \eqref{eq4.8} and \eqref{eq4.9} in the time interval $[0,T)$, and $z^n(t)$ is the forward trajectory of particles associated with \eqref{eq4.9} and issued from $(x,w)$ at time 0. Then for $t \in [0,T)$, we have
		$$\begin{aligned}&(1)\quad\|f^n(t)-f^{n-1}(t)\|_{L^\infty}+\|z^n(t)-z^{n-1}(t)\|_{L^\infty}\leq C(\varepsilon,T)\int_0^t\|\omega^n(s)\|_{L^2}\mathrm{d}s,\\&(2)\quad\|\omega^{n+1}(t)\|_{L^2}^2\leq C(\varepsilon,T)\left(\int_0^t\|\omega^n(s)\|_{L^2}^2\mathrm{d}s+\int_0^t\|\omega^{n+1}(s)\|_{L^2}^2\mathrm{d}s\right).\end{aligned}$$
	\end{lem}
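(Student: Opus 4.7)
The plan is to establish (1) by a coupled Gronwall argument on the characteristic displacements and the pointwise difference of the densities, and then feed the resulting bound into a standard $L^2$ energy estimate for the difference of the regularized Navier-Stokes equations to obtain (2).

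For (1), I would subtract the characteristic ODEs \eqref{eq4.22} written for $z^n$ and $z^{n-1}$, both started from a common $(x,w)$ at time $0$. Using Lipschitz continuity of $\hat v$ on the compact velocity set $\hat v(B_{C_W}(0))$ guaranteed by Proposition~\ref{prop2.4} together with Remark~\ref{rem4.1}--\ref{rem4.2}, the uniform $W^{1,\infty}$-bound on $f^n$ from Lemma~\ref{lem4.5} (so that $L[\cdot]$ is Lipschitz in its spatial and velocity arguments and Lipschitz as a functional of $f$ through $\|f^n-f^{n-1}\|_{L^\infty}$), and the mollifier inequality
\[
\|m_\varepsilon\ast\omega^n\|_{L^\infty(\mathbb{T}^3)}\leq\|m_\varepsilon\|_{L^2(\mathbb{R}^3)}\|\omega^n\|_{L^2(\mathbb{T}^3)}\leq C(\varepsilon)\|\omega^n\|_{L^2(\mathbb{T}^3)},
\]
I obtain an estimate of the form
\[
|z^n(t)-z^{n-1}(t)|\leq C(\varepsilon,T)\int_0^t\!\bigl(|z^n(s)-z^{n-1}(s)|+\|f^n(s)-f^{n-1}(s)\|_{L^\infty}+\|\omega^n(s)\|_{L^2}\bigr)\,\mathrm{d}s,
\]
uniform in the initial $(x,w)\in\mathrm{supp}\,f_\varepsilon^{\mathrm{in}}$. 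For the density piece I would evaluate the representation formula \eqref{eq4.12} for $f^n$ and $f^{n-1}$ along their respective backward trajectories emanating from a common point $(t,x,w)$; the Lipschitz regularity of $f_\varepsilon^{\mathrm{in}}$ and the uniform $W^{1,\infty}$-bound on $f^n$ let $\|f^n(t)-f^{n-1}(t)\|_{L^\infty}$ be dominated by the backward-trajectory separation plus the difference of the accumulated exponential weights, each again linear in $\|z^n-z^{n-1}\|_{L^\infty(0,t)}$ and $\|f^n-f^{n-1}\|_{L^\infty(0,t)}$. Closing the coupled inequality with Gronwall's lemma delivers (1).

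For (2), I would subtract the regularized Navier-Stokes equations \eqref{eq4.8} at levels $n+1$ and $n$, split the convection difference as $(m_\varepsilon\ast\omega^{n+1})\cdot\nabla u^{n+1}+(m_\varepsilon\ast u^n)\cdot\nabla\omega^{n+1}$, and test the resulting equation against $\omega^{n+1}$ in $L^2(\mathbb{T}^3)$. Since $m_\varepsilon\ast u^n$ remains divergence-free on $\mathbb{T}^3$, the second convection piece and the pressure difference vanish after integration by parts, leaving
\[
\tfrac12\tfrac{\mathrm{d}}{\mathrm{d}t}\|\omega^{n+1}\|_{L^2}^2+\|\nabla\omega^{n+1}\|_{L^2}^2\leq |I_1|+|I_2|,
\]
where $I_1$ is bounded by $C(\varepsilon)\|\nabla u^{n+1}\|_{L^2}\|\omega^{n+1}\|_{L^2}^2$ via the mollifier estimate, and $I_2$ is the drag-force difference. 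Decomposing $f^n(u^n-w)-f^{n-1}(u^{n-1}-w)=(f^n-f^{n-1})(u^n-w)+f^{n-1}\omega^n$, I control the first summand through part (1) together with the compact $w$-support of $\gamma_\varepsilon$, and the second through Cauchy-Schwarz together with the moment bound of Lemma~\ref{lem4.1}. Absorbing a fraction of $\|\nabla\omega^{n+1}\|_{L^2}^2$ by Young's inequality, integrating in $t$, and invoking the time-$L^2$ bound on $\|\nabla u^{n+1}\|_{L^2}$ from Lemma~\ref{lem4.5}, produces (2).

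The principal obstacle is closing the coupled Gronwall inequality underlying (1): the pointwise $L^\infty$-difference of the densities must be linked to the $L^2$-difference of the velocities through both the geometric displacement of backward characteristics \emph{and} the multiplicative exponential weights appearing in \eqref{eq4.12}. Tracking this coupling without generating $n$-dependent constants requires careful exploitation of the uniform-in-$n$ estimates of Lemma~\ref{lem4.5} and the compact-support bound of Remark~\ref{rem4.2}; once these ingredients are in place, the Gronwall closure is mechanical, and the subsequent passage to (2) is a standard $L^2$ energy estimate in the spirit of \cite{ref29}.
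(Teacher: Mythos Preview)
Your proposal is correct and follows essentially the same two-step strategy as the paper: a coupled Gronwall argument on the characteristic separation and the density difference for (1), then the $L^2$ energy estimate for the Navier--Stokes difference for (2) (the paper simply cites \cite{ref29} for the latter, exactly as you outline). The only notable variation is that you propose using \emph{backward} characteristics from a common terminal point $(t,x,w)$, whereas the paper uses \emph{forward} characteristics from a common initial point $(x,w)$ and then compensates by adding and subtracting $f^{n-1}(z^{n-1}(t),t)$ (the term $\mathcal{K}_1$ controlled via the $W^{1,\infty}$ bound \eqref{eq4.21}); the two routes are equivalent. The paper closes the coupled inequality with an explicit weighted sum $\eqref{eq4.23}+\tfrac{1}{\eta}\eqref{eq4.28}$ before applying Gronwall, which is just a concrete implementation of your ``coupled Gronwall.''
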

	\begin{proof}
		For the estimate, we take two straightforward steps.
		
		Step $A{:}$ We first derive
		\begin{align}\label{eq4.23}
			&\|f^n(t)-f^{n-1}(t)\|_{L^\infty}\leq C(T)\|z^n(t)-z^{n-1}(t)\|_{L^\infty}\nonumber\\
			&\quad\quad+C(c,\phi, C_W,\varepsilon,T)\biggl(\int_0^t\|z^n(s)-z^{n-1}(s)\|_{L^\infty}\:\mathrm{d}s+\int_0^t\|f^n(s)-f^{n-1}(s)\|_{L^\infty}\:\mathrm{d}s\biggr).\end{align}
		
		We integrate equation \eqref{eq4.9} along the particle trajectories $z^n(s)$ and $z^{n-1}$ from $s=0$ to $s=t$ to find
		\begin{align}
			&f^n(z^n(t),t)-f^{n-1}(z^{n-1}(t),t)\nonumber\\
			=&\mathrm{e}^{3t}f_{\mathrm{in},\varepsilon}(x,w)\times\bigg(\exp\left\{\int_0^t\int_{\mathbb{T}^3\times\mathbb{R}^3}\phi(|x^n(s)-x_*|)\nabla_w\cdot v^{n}(s)f^n\mathrm{d}x_*\mathrm{d}w_*\mathrm{d}s\right\}\nonumber\\
			&\quad\quad-\exp\left\{\int_0^t\int_{\mathbb{T}^3\times\mathbb{R}^3}\phi(|x^{n-1}(s)-x_*|)f^{n-1}\nabla_w\cdot v^{n-1}(s)\mathrm{d}x_*\mathrm{d}w_*\mathrm{d}s\right\}\bigg).
		\end{align}
		
		This again can be rewritten as
		\begin{align}\label{eq4.24}
			&f^n(z^n(t),t)-f^{n-1}(z^n(t),t)\nonumber\\
			&=f^{n-1}(z^{n-1}(t),t)-f^{n-1}(z^n(t),t)\nonumber\\
			&\quad\quad+\mathrm{e}^{3t}f_{\mathrm{in},\varepsilon}(x,w)\times\bigg(\exp\left\{\int_0^t\int_{\mathbb{T}^3\times\mathbb{R}^3}\phi(|x^n(s)-x_*|)\nabla_w\cdot v^{n}(s)f^n\mathrm{d}x_*\mathrm{d}w_*\mathrm{d}s\right\}\nonumber\\
			&\quad\quad-\exp\left\{\int_0^t\int_{\mathbb{T}^3\times\mathbb{R}^3}\phi(|x^{n-1}(s)-x_*|)f^{n-1}\nabla_w\cdot v^{n-1}(s)\mathrm{d}x_*\mathrm{d}w_*\mathrm{d}s\right\}\bigg)\nonumber\\
			&=:\mathcal{K}_1(t)+\mathcal{K}_2(t).
		\end{align}
		
		$\bullet$(The estimate of $\mathcal{K}_1)$: we use \eqref{eq4.21} to obtain
		\begin{equation}\label{eq4.25}
			\mathcal{K}_1(t)\leq C(T)\|z^n(t)-z^{n-1}(t)\|_{L^\infty}.
		\end{equation}
		
		$\bullet$(The estimate of $\mathcal{K}_2)$: we use Remark \ref{rem2.1}, Remark \ref{rem4.2} and Lemma \ref{lem4.2} to obtain
		\begin{align}\label{eq4.26}
			\mathcal{K}_{2}(t)&\leq C(\varepsilon,T)\|f_{\mathrm{in},\varepsilon}\|_{L^{\infty}}\bigg(\int_0^t\int_{\mathbb{T}^3\times\mathbb{R}^3}\phi(|x^n(s)-x_*|)\nabla_w\cdot v^{n}(s)f^n\mathrm{d}x_*\mathrm{d}w_*\mathrm{d}s\nonumber\\
			&\quad\quad-\int_0^t\int_{\mathbb{T}^3\times\mathbb{R}^3}\phi(|x^{n-1}(s)-x_*|)f^{n-1}\nabla_w\cdot v^{n-1}(s)\mathrm{d}x_*\mathrm{d}w_*\mathrm{d}s\nonumber\\
			&\leq C(\varepsilon,T)\|f_{\mathrm{in},\varepsilon}\|_{L^{\infty}}\bigg(\int_0^t\int_{\mathbb{T}^3\times\mathbb{R}^3}\phi(|x^n(s)-x_*|)\nabla_w\cdot v^{n}(s)(f^n-f^{n-1})\mathrm{d}x_*\mathrm{d}w_*\mathrm{d}s\nonumber\\
			&\quad\quad +\int_0^t\int_{\mathbb{T}^3\times\mathbb{R}^3}\left(\phi(|x^n(s)-x_*|)\nabla_w\cdot v^{n}(s)-\phi(|x^{n-1}(s)-x_*|)\nabla_w\cdot v^{n-1}(s)\right)f^{n-1}\mathrm{d}x_*\mathrm{d}w_*\mathrm{d}s\bigg)\nonumber\\
			&\leq C(\varepsilon,T)\int_{0}^{t}\|f^{n}(s)-f^{n-1}(s)\|_{L^{\infty}}\mathrm{d}s\nonumber\\
			&\quad\quad+C(T)\int_{0}^{t}\|\phi(|x^n(s)-x_*|)\nabla_w\cdot v^{n}(s)-\phi(|x^{n-1}(s)-x_*|)\nabla_w\cdot v^{n-1}(s)\|_{L^{\infty}}\mathrm{d}s\nonumber\\
			&\leq C(\varepsilon,T)\int_{0}^{t}\|f^{n}(s)-f^{n-1}(s)\|_{L^{\infty}}\mathrm{d}s\nonumber\\
			&\quad\quad+C(T)\int_{0}^{t}\|\phi(|x^n(s)-x_*|)\left(\nabla_w\cdot v^{n}(s)-\nabla_w\cdot v^{n-1}(s)\right)\|_{L^{\infty}}\mathrm{d}s\nonumber\\
			&\quad\quad+C(T)\int_{0}^{t}\|\left(\phi(|x^n(s)-x_*|)-\phi(|x^{n-1}(s)-x_*|)\right)\nabla_w\cdot v^{n-1}(s)\|_{L^{\infty}}\mathrm{d}s\nonumber\\
			&\leq C(\varepsilon,T)\int_{0}^{t}\|f^{n}(s)-f^{n-1}(s)\|_{L^{\infty}}\mathrm{d}s\nonumber\\
			&\quad\quad+C(\phi, C_W,T)\int_{0}^{t}\|v^{n}(s)-v^{n-1}(s)\|_{L^{\infty}}\mathrm{d}s+C(\phi,T)\int_{0}^{t}\|x^n(s)-x^{n-1}(s)\|_{L^{\infty}}\mathrm{d}s\nonumber\\
			&\leq C(\varepsilon,T)\int_{0}^{t}\|f^{n}(s)-f^{n-1}(s)\|_{L^{\infty}}\mathrm{d}s\nonumber\\
			&\quad\quad+C(c,\phi, C_W,T)\int_{0}^{t}\|w^{n}(s)-w^{n-1}(s)\|_{L^{\infty}}\mathrm{d}s+C(\phi,T)\int_{0}^{t}\|x^n(s)-x^{n-1}(s)\|_{L^{\infty}}\mathrm{d}s\nonumber\\
			&\leq  C(\varepsilon,T)\int_{0}^{t}\|f^{n}(s)-f^{n-1}(s)\|_{L^{\infty}}\mathrm{d}s+C(c,\phi, C_W,T)\int_{0}^{t}\|z^{n}(s)-z^{n-1}(s)\|_{L^{\infty}}\mathrm{d}s
		\end{align}
		where we used the uniform boundednessof $w$-support of $f$ (see Remark \ref{rem4.1}). In \eqref{eq4.24}, we combine \eqref{eq4.25} and \eqref{eq4.26} to obtain the desired estimate \eqref{eq4.23}.
		
		Step $B{:}$ We next derive
		\begin{align}\label{eq4.28}
			\|z^n(t)-z^{n-1}(t)\|_{L^\infty}\leq C(\varepsilon,T)\bigg(\int_0^t\|\omega^n(s)\|_{L^2}\mathrm{d}s&+\int_0^t\|z^n(s)-z^{n-1}(s)\|_{L^\infty}\mathrm{d}s\nonumber\\
			&+\int_0^t\|f^n(s)-f^{n-1}(s)\|_{L^\infty}\mathrm{d}s\bigg).
		\end{align}
		
		It follows from \eqref{eq4.22} that we have the following estimate:
		\begin{align}|z^n(t)-z^{n-1}(t)|&\leq\int_0^t|v^n(s)-v^{n-1}(s)|\mathrm{d}s+\int_0^t|w^n(s)-w^{n-1}(s)|\mathrm{d}s\nonumber\\
			&\quad\quad+\int_0^t|(m_\varepsilon*u^n)(x^n(s),s)-(m_\varepsilon*u^{n-1})(x^{n-1}(s),s)|{d}s\nonumber\\
			&\quad\quad+\bigg|\int_0^t\int_{\mathbb{T}^3\times\mathbb{R}^3}\phi(|x^n(s)-x_*|)(\hat{v}(w_*)-v^n(s))f^n{d}x_*{d}w_*{d}s\nonumber\\
			&\quad\quad\quad-\int_0^t\int_{\mathbb{T}^3\times\mathbb{R}^3}\phi\left(|x^{n-1}(s)-x_*|\right)(\hat{v}(w_*)-v^{n-1}(s))f^{n-1}{d}x_*{d}w_*\mathrm{d}s\bigg|\nonumber\\
			&=:\mathcal{K}_3(t)+\mathcal{K}_4(t)+\mathcal{K}_5(t)+\mathcal{K}_6(t).
		\end{align}
		
		$\bullet$ (Estimates of $\mathcal{K}_i,i=3,4$): It can be easily estimated as
		$$\mathcal{K}_3(t)+\mathcal{K}_4(t)\leq C(c)\int_0^t\|z^n(s)-z^{n-1}(s)\|_{L^\infty}\mathrm{d}s.$$
		
		$\bullet$ (Estimates of $\mathcal{K}_5$): It can be easily estimated as
		\begin{align}
			\mathcal{K}_5(t)& \leq\int_0^t\left|(m_\varepsilon*u^n)(x^n(s),s)-(m_\varepsilon*u^n)(x^{n-1}(s),s)+m_\varepsilon*(u^n-u^{n-1})(x^{n-1}(s),s)\right|\mathrm{d}s \nonumber\\
			& \leq C(\varepsilon)\left(\int_0^t\|z^n(s)-z^{n-1}(s)\|_{L^\infty}\mathrm{d}s+\int_0^t\|\omega^n(s)\|_{L^2}\mathrm{d}s\right).
		\end{align}
		
		$\bullet$ (Estimates of $\mathcal{K}_6$): We split it into three terms:
		\begin{align}
			\mathcal{K}_{6}(t) & \leq\int_0^t\int_{\mathbb{T}^3\times\mathbb{R}^3}[\phi(|x^n(s)-x_*|)-\phi(|x^{n-1}(s)-x_*|)](\hat{v}(w_*)-v^n)f^n\mathrm{d}x_*\mathrm{d}w_*\mathrm{d}s \nonumber\\
			&\quad\quad +\int_0^t\int_{\mathbb{T}^3\times\mathbb{R}^3}\phi(|x^{n-1}(s)-x_*|)[v^{n-1}(s)-v^n]f^n\mathrm{d}x_*\mathrm{d}w_*\mathrm{d}s \nonumber\\
			&\quad\quad +\int_0^t\int_{\mathbb{T}^3\times\mathbb{R}^3}\phi(|x^{n-1}(s)-x_*|)(\hat{v}(w_*)-v^{n-1}(s))[f^n(s)-f^{n-1}(s)]\mathrm{d}x_*\mathrm{d}w_*\mathrm{d}s \nonumber\\
			& =:\mathcal{K}_{61}(t)+\mathcal{K}_{62}(t)+\mathcal{K}_{63}(t).
		\end{align}
		We now estimate the terms $\mathcal{K}_{6i},i=1,2,3$ as follows:
		\begin{align}
			\mathcal{K}_{61}(t)&\leq\int_0^t\left(\|z^n(s)-z^{n-1}(s)\|_{L^\infty}\int_{\mathbb{T}^3\times\mathbb{R}^3}|v_*|f^n\mathrm{d}x_*dv_*\right)\mathrm{d}s+C(\varepsilon)\int_0^t|v^n(s)|\left\|z^n(s)-z^{n-1}(s)\right\|_{L^\infty}\mathrm{d}s\nonumber\\
			&\leq\int_0^t\left(\|z^n(s)-z^{n-1}(s)\|_{L^\infty}\int_{\mathbb{T}^3\times\mathbb{R}^3}|w_*|f^n\mathrm{d}x_*dv_*\right)\mathrm{d}s+C(\varepsilon)\int_0^t|v^n(s)|\left\|z^n(s)-z^{n-1}(s)\right\|_{L^\infty}\mathrm{d}s\nonumber\\
			&\leq C(\varepsilon)\int_0^t\|z^n(s)-z^{n-1}(s)\|_{L^\infty}\mathrm{d}s,
		\end{align}
		and
		\begin{equation}
			\mathcal{K}_{62}(t)\leq C(\varepsilon)\int_0^t\|z^n(s)-z^{n-1}(s)\|_{L^\infty}\mathrm{d}s,
		\end{equation}
		\begin{equation}
			\mathcal{K}_{63}(t)\leq 
			C(\varepsilon)\int_0^t\|f^n(s)-f^{n-1}(s)\|_{L^\infty}\mathrm{d}s.
		\end{equation}
		where we used the uniform boundedness of $v^n$.
		
		We combine the above estimates to obtain the desired result \eqref{eq4.28}. We now consider the sum $\displaystyle{\eqref{eq4.23}+\frac1\eta\eqref{eq4.28}}$ with $\displaystyle{0<\eta\ll1}$ to obtain
		\begin{align}
			&\|f^{n}(t)-f^{n-1}(t)\|_{L^{\infty}}+\left(\frac{1}{\eta}-C(T)\right)\|z^{n}(t)-z^{n-1}(t)\|_{L^{\infty}}\nonumber\\
			&\leq\frac{C(\varepsilon,T)}{\eta}\biggl[\int_{0}^{t}\|\omega^{n}(s)\|_{L^{2}}\:\mathrm{d}s+\int_{0}^{t}\|z^{n}(s)-z^{n-1}(s)\|_{L^{\infty}}\:\mathrm{d}s+\int_{0}^{t}\|f^{n}(s)-f^{n-1}(s)\|_{L^{\infty}}\:\mathrm{d}s\biggr].\end{align}
		
		We choose $\eta\ll1$ sufficiently small such that
		
		$$\frac1\eta-C(T)\geq1.$$
		
		This together with setting $\Delta^n(t):=\|f^n(t)-f^{n-1}(t)\|_{L^\infty}+\|z^n(t)-z^{n-1}(t)\|_{L^\infty}$ gives
		
		$$\Delta^n(t)\leq\frac{C(\varepsilon,T)}{\eta}\bigg[\int_0^t\|\omega^n(s)\|_{L^2}\:\mathrm{d}s+\int_0^t\Delta^n(s)\:\mathrm{d}s\bigg],$$
		
		Then the standard $Gr\ddot{o}nwall's$ Lemma yields
		\begin{equation}\label{eq4.36}
			\Delta^n(t)\leq C(\varepsilon,T)\int_0^t\|\omega^n(s)\|_{L^2}\:\mathrm{d}s.
		\end{equation}
		
		For the estimate of $\omega^n$, we use the estimates of \cite{ref29}. In fact, we have
		
		$$\begin{aligned}\frac{1}{2}\frac{\mathrm{d}}{\mathrm{d}t}\|\omega^{n+1}(t)\|_{L^{2}}^{2}&+\frac{1}{2}\|\nabla\omega^{n+1}(t)\|_{L^{2}}^{2}\\&\leq C(\varepsilon,T)(\|f^{n}(t)-f^{n-1}(t)\|_{L^{\infty}}^{2}+\|\omega^{n+1}(t)\|_{L^{2}}^{2}+\|\omega^{n}(t)\|_{L^{2}}^{2}).\end{aligned}$$
		
		Please refer to subsection 3.1.3 in \cite{ref29} for a detailed derivation process. Then, we now use \eqref{eq4.36} to find
		$$\begin{aligned}\frac{1}{2}\frac{\mathrm{d}}{\mathrm{d}t}\|\omega^{n+1}(t)\|_{L^{2}}^{2}+\frac{1}{2}\|\nabla\omega^{n+1}(t)\|_{L^{2}}^{2}\leq C(\varepsilon,T)\biggl(\int_{0}^{t}\|\omega^{n}(s)\|_{L^{2}}^{2}\:\mathrm{d}s+\|\omega^{n+1}(t)\|_{L^{2}}^{2}+\|\omega^{n}(t)\|_{L^{2}}^{2}\biggr).\end{aligned}$$
		
		Therefore, by integrating the previous inequality with respect to $t$ and using 
		$$\int_0^t\int_0^s\|\omega^n(r)\|_{L^2(\mathbb{T}^3)}^2dr\mathrm{d}s\leq T\int_0^t\|\omega^n(r)\|_{L^2(\mathbb{T}^3)}^2dr,$$
		we can reach the desired result.
	\end{proof}
	\begin{lem}\label{lem4.7}
		The unique solution sequence $[f^{n}, u^{n}]$ obtained in Lemma \ref{lem4.5} strongly converges in $L^{\infty }(\mathbb{T} ^3\times \mathbb{R} ^3 \times (0, T) ) \times L^{\infty }( 0, T; L^2( \mathbb{T} ^3) )$ towards the weak solution of \eqref{eq4.5}.
	\end{lem}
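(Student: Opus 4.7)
The plan is to convert the iterative estimates of Lemma \ref{lem4.6} into a genuine Cauchy property via Proposition \ref{prop2.1}, and then pass to the limit in the weak formulation of Definition \ref{df4.2}.

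First I would set $a_n(t):=\|\omega^n(t)\|_{L^2(\mathbb{T}^3)}^2$. Lemma \ref{lem4.6}(2) gives exactly the hypothesis of Proposition \ref{prop2.1} with $A=0$, so there exists $K=K(\varepsilon,T)$ such that
\[
\|\omega^n(t)\|_{L^2}^2 \leq \frac{K^n t^n}{n!},\qquad 0\leq t\leq T.
\]
Summability of $\sum_n\sqrt{K^n T^n/n!}$ shows $\{u^n\}$ is Cauchy in $L^\infty(0,T;L^2(\mathbb{T}^3))$; denote its limit by $u$. Feeding this back into Lemma \ref{lem4.6}(1) and using the same summability argument yields that $\{f^n\}$ and the trajectories $\{z^n\}$ are also Cauchy, in $L^\infty((0,T)\times\mathbb{T}^3\times\mathbb{R}^3)$ and in $L^\infty((0,T)\times\mathbb{T}^3\times\mathbb{R}^3)$ respectively; call the limits $f$ and $z$.

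Next I would collect weak limits from the uniform bounds of Lemma \ref{lem4.5}. Up to subsequences (uniqueness of the limit implies the whole sequence converges), one has
\[
u^n \rightharpoonup u \text{ in } L^2(0,T;H^1(\mathbb{T}^3)),\quad u^n \rightharpoonup^* u \text{ in } L^\infty(0,T;L^2),\quad \partial_t u^n \rightharpoonup \partial_t u \text{ in } L^2(0,T;L^2),
\]
and $f^n\rightharpoonup^* f$ in $L^\infty$. The uniform bound on the moments $M_2 f^n$ from Lemma \ref{lem4.1} together with Remark \ref{rem4.2} (which gives a uniform-in-$n$ bound on the $w$-support for initial data with compact $w$-support) propagate to $f$ and will be crucial for controlling drag and alignment integrals.

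The main work, and the only delicate step, is passing to the limit in the weak forms of Definition \ref{df4.2}. For the fluid equation with test function $\psi$, the linear terms $\int u^{n+1}\cdot\psi$ and $\int\nabla u^{n+1}:\nabla\psi$ pass to the limit by the weak convergences above; the convection term $((m_\varepsilon\ast u^{n+1})\cdot\nabla) u^{n+1}\cdot\psi$ is handled using the strong $L^\infty(0,T;L^2)$ convergence of $u^{n+1}$, the smoothness of the kernel $m_\varepsilon$ (so that $m_\varepsilon\ast u^{n+1}\to m_\varepsilon\ast u$ in $L^\infty$ in $x$), and weak convergence of $\nabla u^{n+1}$ in $L^2$; the drag term $\int(u^{n+1}-w)\cdot\psi f^n\gamma_\varepsilon(w)\,dx\,dw\,ds$ uses the compact $w$-support provided by $\gamma_\varepsilon$ together with strong $L^\infty$ convergence of $f^n$ and strong $L^2$ convergence of $u^{n+1}$. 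For the Vlasov equation with test function $\varphi$ compactly supported in $w$, strong $L^\infty$ convergence of $f^n$ disposes of the transport terms $\partial_t\varphi$ and $\hat v(w)\cdot\nabla\varphi$; the most sensitive piece is $F[f^{n+1},u^{n+1}]\cdot\nabla_w\varphi\,f^{n+1}$, for which I would split $F=L[f^{n+1}]+m_\varepsilon\ast u^{n+1}-w$, treat the alignment part by writing the double integral and using strong $L^\infty$ convergence of $f^{n+1}$ (controlling $\hat v(w_*)$ via the uniform $w$-support bound), and treat the drag part using the smoothness of $m_\varepsilon$ and strong convergence of $u^{n+1}$ in $L^\infty(0,T;L^2)$. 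Finally, passage of the initial data is immediate since $f^{n+1}(0,\cdot,\cdot)=f^{\mathrm{in}}_\varepsilon$ and $u^{n+1}(0,\cdot)=u^{\mathrm{in}}_\varepsilon$ for every $n$.

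The hardest obstacle is ensuring that all product terms converge in the sense of distributions; this is where the regularizations $m_\varepsilon$ and $\gamma_\varepsilon$ earn their keep, since they remove exactly the high-frequency/high-velocity pathologies that would otherwise require strong compactness (Aubin–Lions type) beyond what we have. With these regularizations present and the Cauchy property in strong norms established above, every nonlinear term splits into a product of one strongly convergent factor and one weakly convergent factor, so the passage to the limit yields that $(f,u)$ satisfies the weak formulation of \eqref{eq4.5}.
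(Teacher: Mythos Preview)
Your proposal is correct and follows essentially the same route as the paper: apply Proposition \ref{prop2.1} (with $A=0$) to Lemma \ref{lem4.6}(2) to obtain $\|\omega^n(t)\|_{L^2}^2\leq K^nT^n/n!$, deduce the Cauchy property of $\{u^n\}$ in $L^\infty(0,T;L^2)$, feed this into Lemma \ref{lem4.6}(1) for the Cauchy property of $\{f^n\}$ and $\{z^n\}$, and then pass to the limit in the weak formulation. Your write-up is in fact more detailed than the paper's on the limit-passing step; one small slip is that the drag term in \eqref{eq4.8} involves $f^n(u^n-w)$, not $f^n(u^{n+1}-w)$, but since both $u^n$ and $u^{n+1}$ converge to the same limit this does not affect the argument.
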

	\begin{proof}
		The proof simply follows from Lemma \ref{lem4.6}. Indeed, it follows from Lemma \ref{lem4.6}(2) and the Grönwall-type inequality in Proposition \ref{prop2.1}  that
		$$\|u^n-u^{n-1}\|_{L^\infty(0,T;L^2(\mathbb{T}^3))}^2\leq\frac{K^nT^n}{n!},$$
		for some $K>0.$ This gives the strong convergence of $\{u^n\}$ in $L^\infty(0,T;L^2(\mathbb{T}^3))$, and subsequently, this also yields
		$$\|f^n(t)-f^{n-1}(t)\|_{L^\infty}+\|z^n(t)-z^{n-1}(t)\|_{L^\infty}\leq\frac{(\tilde{K})^nT^n}{n!},$$
		for some $\tilde{K}>0.$ The convergence of $[f^n,u^n]$ in $L^\infty((0,T)\times\mathbb{T}^3\times\mathbb{R}^3)\times L^\infty(0,T;L^2(\mathbb{T}^3))$ enables us to pass to the limit with respect to $n$ in the weak formulation for the approximated system. This completes the proof.
	\end{proof}
	\subsection{Proof of Theorem 1.2} 
	So far, we have already proved that the sequence $[f^n_\varepsilon,u^n_\varepsilon]$ is converged and we note its limit as $[f_\varepsilon,u_\varepsilon]$. From Lemma \ref{lem4.7}, we know that is a solution of the regularized system \eqref{eq4.5}. It is easy to observe that $[f_\varepsilon,u_\varepsilon]$ will formally converge to the solution of \eqref{eq1} as $\varepsilon \rightarrow 0$. We will strictly prove this in this subsection.
	\begin{lem}\label{lem4.8}
		Let $\left[ f_{\varepsilon }, u_{\varepsilon }\right]$ be a solution to \eqref{eq4.5}. Then there exists $T^*\in(0,T]$ and a positive constant $C$ which is independent of $\varepsilon$ such that
		$$\begin{aligned}&(1)\quad\|f_{\varepsilon}\|_{L^{\infty}(\mathbb{T}^{3}\times\mathbb{R}^{3}\times(0,T))}\leq C(T)\|f^{\mathrm{in}}\|_{L^{\infty}(\mathbb{T}^{3}\times\mathbb{R}^{3})},\\&(2)\quad\|u_{\varepsilon}(t)\|_{L^{2}(\mathbb{T}^{3})}^{2}+\int_{0}^{t}\|\nabla u_{\varepsilon}(s)\|_{L^{2}(\mathbb{T}^{3})}^{2}\mathrm{d}s+M_{2}f_{\varepsilon}(t)\leq C,\quad\mathrm{a.e.}\:t\leq T^{*},\end{aligned}$$
	\end{lem}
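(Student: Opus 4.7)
The proof splits naturally into two parts, one for each claim.

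For claim (1), I would apply the characteristic representation derived in \eqref{eq4.12} directly to the regularized system. Along any forward trajectory $z(t)=(x(t),w(t))$ one has
$$f_\varepsilon(t,z(t)) = f_\varepsilon^{\mathrm{in}}(z)\exp\left(3t + \int_0^t\int_{\mathbb{T}^3\times\mathbb{R}^3}\phi(|x(s)-x_*|)(\nabla_w\cdot v(s))f_\varepsilon(s,z_*)\,\mathrm{d}x_*\mathrm{d}w_*\mathrm{d}s\right).$$
From Proposition \ref{prop2.4} the divergence $\nabla_w\cdot\hat{v}(w)=(d-1)/\lambda_1(v)+1/\lambda_2(v)$ is uniformly bounded by $3$, while $\phi\leq\phi(0)$ and $M_0 f_\varepsilon(s)\equiv 1$ by the mass conservation in Lemma \ref{lem2.1}. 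Hence the exponent is at most $3(1+\phi(0))T$, independently of $\varepsilon$, and (1) follows from the uniform $L^\infty$-bound of the approximating initial data $f_\varepsilon^{\mathrm{in}}$.

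For claim (2), I would carry out a combined energy estimate on \eqref{eq4.5}. Testing the momentum equation against $u_\varepsilon$, the convection term vanishes because $m_\varepsilon\ast u_\varepsilon$ is still divergence-free, and the pressure disappears because $\nabla\cdot u_\varepsilon=0$; this yields
$$\frac{1}{2}\frac{\mathrm{d}}{\mathrm{d}t}\|u_\varepsilon\|_{L^2}^2 + \|\nabla u_\varepsilon\|_{L^2}^2 = -\int_{\mathbb{T}^3\times\mathbb{R}^3}(u_\varepsilon-w)\cdot u_\varepsilon\, f_\varepsilon\gamma_\varepsilon(w)\,\mathrm{d}x\mathrm{d}w.$$
Multiplying the kinetic part of \eqref{eq4.5} by $|w|^2$ and integrating, and discarding the non-positive alignment contribution exactly as in the proof of Lemma \ref{lem4.1}, produces
$$\frac{1}{2}\frac{\mathrm{d}}{\mathrm{d}t}M_2 f_\varepsilon(t) + M_2 f_\varepsilon(t) \leq \int_{\mathbb{T}^3\times\mathbb{R}^3}w\cdot(m_\varepsilon\ast u_\varepsilon)\,f_\varepsilon\,\mathrm{d}x\mathrm{d}w.$$
Adding the two identities, the good terms $\|\nabla u_\varepsilon\|_{L^2}^2$, $M_2 f_\varepsilon$, and $\int|u_\varepsilon|^2 f_\varepsilon\gamma_\varepsilon$ sit on the left.

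The main obstacle is that the two drag terms no longer cancel as they do in the unregularized system, because of the asymmetry between the cut-off $\gamma_\varepsilon$ on the fluid side and the mollifier $m_\varepsilon$ on the kinetic side. To bound the remaining cross terms I would use Cauchy--Schwarz to write
$$\left|\int_{\mathbb{T}^3\times\mathbb{R}^3} w\cdot(m_\varepsilon\ast u_\varepsilon)\,f_\varepsilon\,\mathrm{d}x\mathrm{d}w\right| \leq \left(\int_{\mathbb{T}^3}\rho_{f_\varepsilon}|m_\varepsilon\ast u_\varepsilon|^2\,\mathrm{d}x\right)^{1/2}\bigl(M_2 f_\varepsilon\bigr)^{1/2},$$
and similarly for the $\int u_\varepsilon\cdot w\, f_\varepsilon\gamma_\varepsilon$ piece. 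Proposition \ref{prop2.2} with $\alpha=0$, $\beta=2$ gives $\rho_{f_\varepsilon}(x)\leq C(\|f_\varepsilon\|_{L^\infty})(m_2 f_\varepsilon(x))^{3/5}$, where $\|f_\varepsilon\|_{L^\infty}$ is already controlled $\varepsilon$-independently by claim (1). Combining this with Young's convolution inequality $\|m_\varepsilon\ast u_\varepsilon\|_{L^p}\leq\|u_\varepsilon\|_{L^p}$, the Sobolev embedding $H^1(\mathbb{T}^3)\hookrightarrow L^6(\mathbb{T}^3)$, and an interpolation in $L^p$-spaces between $L^2$ and $L^6$ for $u_\varepsilon$, the cross terms are controlled by a super-linear functional of $\mathcal{Y}(t):=\|u_\varepsilon(t)\|_{L^2}^2+M_2 f_\varepsilon(t)$ plus a fraction of $\|\nabla u_\varepsilon\|_{L^2}^2$ absorbable into the dissipative term via Young's inequality. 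This produces a nonlinear differential inequality of the form
$$\frac{\mathrm{d}}{\mathrm{d}t}\mathcal{Y}(t) + \tfrac{1}{2}\|\nabla u_\varepsilon(t)\|_{L^2}^2 \leq C\,\bigl(1+\mathcal{Y}(t)\bigr)^{\kappa}$$
for some $\kappa>1$, whose comparison with the corresponding blow-up ODE furnishes a maximal time $T^*\in(0,T]$ depending only on $\mathcal{Y}(0)$ and $\|f^{\mathrm{in}}\|_{L^\infty}$, on which $\mathcal{Y}$ and $\int_0^t\|\nabla u_\varepsilon\|_{L^2}^2\,\mathrm{d}s$ remain bounded $\varepsilon$-uniformly, completing claim (2).
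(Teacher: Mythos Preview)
Your argument is correct and close in spirit to the paper's, but the organization differs in a meaningful way.

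For claim (1) you reproduce inline the characteristics computation behind Lemma \ref{lem4.5}(1); the paper simply invokes that lemma together with $\|f_\varepsilon^{\mathrm{in}}\|_{L^\infty}\leq\|f^{\mathrm{in}}\|_{L^\infty}$. No substantive difference here.

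For claim (2) the paper does \emph{not} combine the kinetic and fluid energies into a single functional. Instead it first estimates $M_2 f_\varepsilon$ from the kinetic equation alone: using Proposition \ref{prop2.2} with $\alpha=1$, $\beta=2$ to get $\|m_1 f_\varepsilon\|_{L^{5/4}}\leq C(M_2 f_\varepsilon)^{4/5}$, it pairs this with $\|u_\varepsilon\|_{L^5}$ via H\"older and obtains, after a Gr\"onwall step on $h=(M_2 f_\varepsilon)^{1/5}$, the closed bound
\[
M_2 f_\varepsilon(t)\leq C\Bigl(1+\int_0^t\|u_\varepsilon(s)\|_{L^5}\,\mathrm{d}s\Bigr)^5.
\]
The fluid estimate and the emergence of the local time $T^*$ are then obtained by citing Section 3.3 of \cite{ref29}. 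Your approach instead couples the two energies into $\mathcal{Y}=\|u_\varepsilon\|_{L^2}^2+M_2 f_\varepsilon$, uses Proposition \ref{prop2.2} with $\alpha=0$ to control $\rho_{f_\varepsilon}$, and closes a single super-linear differential inequality for $\mathcal{Y}$ directly. Both routes rest on the same ingredients (Proposition \ref{prop2.2}, $H^1\hookrightarrow L^6$, interpolation, Young's inequality), and your Cauchy--Schwarz estimate on the cross term in fact reduces to the paper's $(M_2 f_\varepsilon)^{4/5}\|u_\varepsilon\|_{L^5}$ bound after a H\"older step. The advantage of your packaging is that it is self-contained and makes the origin of $T^*$ explicit; the paper's modular version keeps the kinetic and fluid estimates decoupled and delegates the nonlinear Gr\"onwall to \cite{ref29}.
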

	\begin{proof}
		Since $\|f_{\varepsilon}^{\mathrm{in}}\|_{L^{\infty}}\leq\|f^{\mathrm{in}}\|_{L^{\infty}}$, we have
		$$\|f_\varepsilon\|_{L^\infty((0,T)\times\mathbb{T}^3\times\mathbb{R}^3)}\leq{C(T)}\|f^{\mathrm{in}}\|_{L^\infty(\mathbb{T}^3\times\mathbb{R}^3)},$$
		where we have used Lemma \ref{lem4.5}(1).
		
		(2) We use Proposition \ref{prop2.2} to estimate $M_{2}f_{\varepsilon}$ as follows.
		$$\begin{aligned}\frac{\mathrm{d}}{\mathrm{d}t}M_{2}f_{\varepsilon}&=2\int_{\mathbb{T}^3\times\mathbb{R}^3}w\cdot F[f_\varepsilon,u_\varepsilon]f_\varepsilon\:\mathrm{d}x\mathrm{d}w\\
			&=2\int_{\mathbb{T}^3\times\mathbb{R}^3}w\cdot(m_\varepsilon*u_\varepsilon)f_\varepsilon\:\mathrm{d}x\mathrm{d}w+2\int_{\mathbb{T}^3\times\mathbb{R}^3}w\cdot F[f_\varepsilon]f_\varepsilon\:\mathrm{d}x\mathrm{d}w-2M_2f_\varepsilon\\
			&\leq2\|m_\varepsilon*u_\varepsilon\|_{L^5(\mathbb{T}^3)}\|m_1f_\varepsilon\|_{L^{5/4}(\mathbb{T}^3)}-2M_2f_\varepsilon\\
			&\leq C_1\|u_\varepsilon\|_{L^5(\mathbb{T}^3)}(M_2f_\varepsilon)^{4/5}-2M_2f_\varepsilon,
		\end{aligned}$$
		where we used
		$$m_1f_\varepsilon\leq C(m_2f_\varepsilon)^\frac{4}{5} \Rightarrow \|m_1f_\varepsilon\|_{L^{5/4}(\mathbb{T}^3)}\leq C(M_2f_\varepsilon)^{4/5},$$
		and
		$$\begin{aligned}&\int_{\mathbb{T}^3\times\mathbb{R}^3}w\cdot F[f_\varepsilon]f_\varepsilon\:\mathrm{d}x\mathrm{d}w=\int_{\mathbb{T}^6\times\mathbb{R}^6}\phi(|x-x_*|)w\cdot (\hat{v}(w_*)-\hat{v}(w)f_\varepsilon(t,x,w)f_\varepsilon(t,x_*,w_*)\:\mathrm{d}x\mathrm{d}x_*\mathrm{d}w\mathrm{d}w_*\\
			&=-\frac{1}{2}\int_{\mathbb{T}^6\times\mathbb{R}^6}\phi(|x-x_*|)(w_*-w)\cdot (\hat{v}(w_*)-\hat{v}(w)f_\varepsilon(t,x,w)f_\varepsilon(t,x_*,w_*)\:\mathrm{d}x\mathrm{d}x_*\mathrm{d}w\mathrm{d}w_*\\
			&\leq-\frac{C(c)}{2}\int_{\mathbb{T}^6\times\mathbb{R}^6}\phi(|x-x_*|)|\hat{v}(w_*)-\hat{v}(w)|^2f_\varepsilon(t,x,w)f_\varepsilon(t,x_*,w_*)\:\mathrm{d}x\mathrm{d}x_*\mathrm{d}w\mathrm{d}w_*\leq 0. \end{aligned}$$
		Note that $C_1(\|f^{\mathrm{in}}\|_{L^\infty})>0$ is independent of $\varepsilon.$ Set
		$h(t):=\left(M_2f_\varepsilon(t)\right)^{1/5}.$ Then $h$ satisfies
		$$5h'(t)\leq C_1\|u_\varepsilon\|_{L^5}-2h.$$
		We apply Grönwall's Lemma to yield
		\begin{equation}\label{eq4.37}
			M_2f_\varepsilon(t)\leq C\left(1+\int_0^t\|u_\varepsilon(s)\|_{L^5}\:\mathrm{d}s\right)^5,
		\end{equation}
		where $C>0$ is independent of $\varepsilon$. For the estimate of fluid part, we use a similar argument as in Section 3.3 of \cite{ref29} together with the estimate of $f_\varepsilon$ to obtain that there exist a $T^*>0$ and a positive constant $C$ which is independent of $\varepsilon$ such that
		\begin{equation}\label{eq4.38}
			\|u_\varepsilon(t)\|_{L^2(\mathbb{T}^3)}^2+\int_0^t\|\nabla u_\varepsilon(s)\|_{L^2(\mathbb{T}^3)}^2 \mathrm{d}s\leq C,\quad t\leq T^*.
		\end{equation}
		We combine \eqref{eq4.37} and \eqref{eq4.38} to achieve the desired result. 
	\end{proof}
	
	It follows from the uniform bound estimates in Lemma \ref{lem4.8} that there exists $( f, u) \in L^\infty ( \mathbb{T} ^3\times \mathbb{R} ^3\times \mathbb{R} _+ \times ( 0, T^* ) ) \times ( L^\infty ( 0, T^* ; L^2( \mathbb{T} ^3) ) \cap L^2( 0, T^* ; H^1( \mathbb{T} ^3) ) \cap \mathcal{C} ^0( [ 0, T^* ] ; \mathcal{V} ^{\prime })$ such that up to a subsequence, $(f_\varepsilon)$ weakly star converges to $f$ in $L^\infty(\mathbb{T}^3\times\mathbb{R}^3\times(0,T^*))$ and $(u_\varepsilon)$ weakly converges to $u$ in $L^2(0,T^*;H^1(\mathbb{T}^3))$ and weakly star converges in $L^{\infty}(0,T^*;L^2(\mathbb{T}^3)).$ Also, $(u_\varepsilon)$ converges strongly to $u$ in $L^2(0,T^*;L^2(\mathbb{T}^3))$, since $(\partial_tu_\varepsilon)$ is uniformly bounded in $L^{3/2}(0,T^*;\mathcal{V}^{\prime}).$ These convergences allow us to pass to the limit in the weak formulation of the regularized system \eqref{eq4.5}, and this concludes the local-in-time existence of weak solutions to the system \eqref{eq4.5}.
	
	We finally need to extend these local solutions to the global ones. For this, we need the following total classical energy estimates.
	\begin{lem}\label{lem4.9}
		Let $(f,u)$ be a weak solution to \eqref{eq1}. Then, $(f,u)$ satisfies the following estimates
		$$\begin{aligned}
			& (1)\quad\frac{1}{2}M_{2}f(t)+\frac{1}{2}\|u(t)\|_{L^{2}(\mathbb{T}^{3})}^{2}+\int_{0}^{t}\|\nabla u(s)\|_{L^{2}(\mathbb{T}^{3})}^{2}\mathrm{d}s+\int_{0}^{t}\int_{\mathbb{T}^{3}\times\mathbb{R}^{3}}f|u-w|^{2}\mathrm{d}x\mathrm{d}w\mathrm{d}s \\
			& \leq \frac{1}{2}\|u^{\mathrm{in}}\|_{L^{2}(\mathbb{T}^{3})}^{2}+\frac{1}{2}M_{2}f^{\mathrm{in}}, \\
			& (2)\quad \operatorname*{sup}_{0\leq t\leq T^{*}}\|f(t)\|_{L^{\infty}(\mathbb{T}^{3}\times\mathbb{R}^{3})}\leq\mathrm{e}^{CT}\|f^{\mathrm{in}}\|_{L^{\infty}(\mathbb{T}^{3}\times\mathbb{R}^{3})}, \;\mathrm{a.e.}t\;\leq T^{*}.
		\end{aligned}$$
	\end{lem}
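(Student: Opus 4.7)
The plan is to establish both estimates first at the regularized level and then pass to the limit using the convergences of $(f_\varepsilon, u_\varepsilon)$ already at our disposal.

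For estimate (1), I would first test the Vlasov equation $\eqref{eq4.5}_1$ against $|w|^2/2$ and integrate over $\mathbb{T}^3\times\mathbb{R}^3$. As in the computation of Lemma \ref{lem4.8}, the $L[f_\varepsilon]$-contribution is non-positive after the $(x,w)\leftrightarrow(x_*,w_*)$ symmetrization and the inequality $(v_1-v_2)\cdot(w_1-w_2)\geq \frac{c^2+1}{c^2}|v_1-v_2|^2\ge 0$, so
$$\frac{d}{dt}\frac{M_2 f_\varepsilon}{2}+M_2 f_\varepsilon \leq \int_{\mathbb{T}^3\times\mathbb{R}^3} w\cdot (m_\varepsilon\ast u_\varepsilon)\, f_\varepsilon\,dx\,dw.$$
Next, test the regularized Navier--Stokes equation $\eqref{eq4.5}_2$ against $u_\varepsilon$: the pressure term and the convection $\int((m_\varepsilon\ast u_\varepsilon)\cdot\nabla)u_\varepsilon\cdot u_\varepsilon\,dx$ both vanish (by $\nabla\cdot(m_\varepsilon\ast u_\varepsilon)=0$), yielding
$$\frac{1}{2}\frac{d}{dt}\|u_\varepsilon\|_{L^2}^2+\|\nabla u_\varepsilon\|_{L^2}^2 = -\int_{\mathbb{T}^3\times\mathbb{R}^3}(u_\varepsilon-w)\cdot u_\varepsilon\, f_\varepsilon\gamma_\varepsilon(w)\,dx\,dw.$$
Adding the two identities and regrouping, I would recover the dissipation $\int f_\varepsilon|u_\varepsilon-w|^2\,dx\,dw$ up to the commutator errors produced by $m_\varepsilon$ and the cutoff $\gamma_\varepsilon$. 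These errors vanish as $\varepsilon\to 0$ thanks to $m_\varepsilon\ast u_\varepsilon\to u_\varepsilon$ strongly in $L^2$ and $\gamma_\varepsilon\to 1$ with the uniform control of $M_2 f_\varepsilon$ given by Lemma \ref{lem4.8}.

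To obtain the final inequality for the weak solution, I would integrate the resulting differential inequality in time and then pass to the limit $\varepsilon\to 0$, using the weak-$\ast$ convergence $f_\varepsilon\rightharpoonup^\ast f$ in $L^\infty$, the weak convergence $\nabla u_\varepsilon\rightharpoonup \nabla u$ in $L^2$, and the strong convergence $u_\varepsilon\to u$ in $L^2(0,T^*;L^2(\mathbb{T}^3))$ (already recorded in the discussion following Lemma \ref{lem4.8}). The norms $\|u(t)\|_{L^2}^2$, $\int_0^t\|\nabla u(s)\|_{L^2}^2\,ds$, $M_2 f(t)$, and $\int_0^t\!\int f|u-w|^2\,dx\,dw\,ds$ are all convex functionals, so lower semicontinuity under weak/weak-$\ast$ convergence preserves the inequality on the left; on the right, the initial data converge strongly by construction of the regularized initial data in Section 4.1.

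For estimate (2), I would exploit the explicit characteristic representation \eqref{eq4.12}, which gives
$$f_\varepsilon(t,z(t))=f_\varepsilon^{\mathrm{in}}(z)\exp\!\Bigl(3t+\int_0^t\!\int_{\mathbb{T}^3\times\mathbb{R}^3}\phi(|x(s)-x_*|)(\nabla_w\cdot v(s))f_\varepsilon(s,z_*)\,dx_*\,dw_*\,ds\Bigr).$$
By Lemma \ref{lem4.3}(1) the integrand in the exponent is bounded by $3(\phi(0)+1)$, while the conservation of mass gives $\int f_\varepsilon=1$; hence the exponent is bounded by $Ct$ with $C$ independent of $\varepsilon$, and
$$\|f_\varepsilon(t)\|_{L^\infty}\leq e^{Ct}\|f_\varepsilon^{\mathrm{in}}\|_{L^\infty}\leq e^{Ct}\|f^{\mathrm{in}}\|_{L^\infty}.$$
Passing $\varepsilon\to 0$ via the weak-$\ast$ lower semicontinuity of the $L^\infty$ norm yields (2).

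The main obstacle is the careful treatment of the coupling terms when adding the two energy identities at the regularized level: the mollified convection velocity $m_\varepsilon\ast u_\varepsilon$ in the kinetic equation and the truncated drag force $\gamma_\varepsilon(w)$ in the fluid equation prevent an exact cancellation producing $-\int f_\varepsilon|u_\varepsilon-w|^2$. I would control these defects by writing $m_\varepsilon\ast u_\varepsilon - u_\varepsilon$ and $1-\gamma_\varepsilon(w)$ explicitly, using Cauchy--Schwarz together with the uniform bounds on $M_2 f_\varepsilon$ and $\|u_\varepsilon\|_{L^2}$ from Lemma \ref{lem4.8} and the fact that $f_\varepsilon^{\mathrm{in}}$ has compact support in $w$, so that all commutator terms vanish in the limit. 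Everything else is a standard combination of the weak formulation with lower semicontinuity.
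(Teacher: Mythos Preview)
Your proposal is correct and follows essentially the same route as the paper: derive the combined energy identity for the regularized system \eqref{eq4.5}, observe that the $L[f_\varepsilon]$-contribution has a sign, collect the mismatch from $m_\varepsilon$ and $\gamma_\varepsilon$ into a remainder $R_\varepsilon(t)\to 0$ (the paper invokes Lemma~2 of \cite{ref29} for this), and pass to the limit by weak lower semicontinuity; for (2) the paper likewise relies on the characteristic bound already recorded in Lemma~\ref{lem4.5}(1)/Lemma~\ref{lem4.8}(1). One small sharpening: to kill the $(1-\gamma_\varepsilon(w))$ terms you need the uniform-in-time compact $w$-support of $f_\varepsilon$ from Lemma~\ref{lem4.2}/Remark~\ref{rem4.2}, not merely the compact support of $f_\varepsilon^{\mathrm{in}}$.
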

	\begin{proof}
		First, we multiply \eqref{eq4.5}$_1$ by $\frac12|w|^{2}$ and integrate the resulting relation over $\mathbb{T}^3\times\mathbb{R}^3\times(0,t)$, and we also multiply (4.1)$_2$ by $u_\varepsilon$ and integrate over $\mathbb{T}^3\times(0,t).$ We combine those two equations to get
		\begin{align}\label{eq4.39}
			&\frac{1}{2}M_{2}f_{\varepsilon}(t)+\frac{1}{2}\|u_{\varepsilon}(t)\|_{L^{2}(\mathbb{T}^{3})}^{2}+\int_{0}^{t}\|\nabla u_{\varepsilon}(s)\|_{L^{2}(\mathbb{T}^{3})}^{2}\:\mathrm{d}s+\int_{0}^{t}\int_{\mathbb{T}^{3}\times\mathbb{R}^{3}}f_{\varepsilon}|u_{\varepsilon}-w|^{2}\:\mathrm{d}x\mathrm{d}w\mathrm{d}s\nonumber\\
			&=\frac{1}{2}M_{2}f_{\varepsilon}^{\mathrm{in}}+\frac{1}{2}\|u_{\varepsilon}^{\mathrm{in}}\|_{L^{2}(\mathbb{T}^{3})}^{2}+\int_{0}^{t}\int_{\mathbb{T}^{3}\times\mathbb{R}^{3}}w\cdot F[f_{\varepsilon}]f_{\varepsilon}\:\mathrm{d}x\mathrm{d}w\mathrm{d}s+R_{\varepsilon}(t),
		\end{align}
		where
		$$\begin{aligned}R_{\varepsilon}(t)&:=\int_{0}^{t}\int_{\mathbb{T}^{3}\times\mathbb{R}^{3}}f_{\varepsilon}|u_{\varepsilon}|^{2}(1-\gamma_{\varepsilon}(w))\:\mathrm{d}x\mathrm{d}w\mathrm{d}s-\int_{0}^{t}\int_{\mathbb{T}^{3}\times\mathbb{R}^{3}}f_{\varepsilon}u_{\varepsilon}\cdot w(1-\gamma_{\varepsilon}(w))\:\mathrm{d}x\mathrm{d}w\mathrm{d}s\\&+\int_{0}^{t}\int_{\mathbb{T}^{3}\times\mathbb{R}^{3}}f_{\varepsilon}(u_{\varepsilon}-m_{\varepsilon}*u_{\varepsilon})\cdot w\:\mathrm{d}x\mathrm{d}w\mathrm{d}s=:R^1_\varepsilon(t)+R^2_\varepsilon(t)+R^3_\varepsilon(t).\end{aligned}$$
		Note that
		$$\int_0^t\int_{\mathbb{T}^3\times\mathbb{R}^3}w\cdot F[f_\varepsilon]f_\varepsilon\:\mathrm{d}x\mathrm{d}w\mathrm{d}s\leq\ 0.$$
		
		On the other hand, we can use the almost same argument as in the proof of the Lemma 2 of \cite{ref29} together with the uniform support estimate of $f_\varepsilon$ to get
		$$\sup\limits_{0\leq t\leq T^*}R_\varepsilon(t)\to0\quad\text{as}\quad\varepsilon\to0.$$
		
		We apply the above observations to \eqref{eq4.39} to obtain
		$$\begin{aligned}\frac{1}{2}M_{2}f_{\varepsilon}(t)&+\frac{1}{2}\|u_{\varepsilon}(t)\|_{L^{2}(\mathbb{T}^{3})}^{2}+\int_{0}^{t}\|\nabla u_{\varepsilon}(s)\|_{L^{2}(\mathbb{T}^{3})}^{2}\:\mathrm{d}s+\int_{0}^{t}\int_{\mathbb{T}^{3}\times\mathbb{R}^{3}}f_{\varepsilon}|u_{\varepsilon}-w|^{2}\:\mathrm{d}x\mathrm{d}w\mathrm{d}s\\&\leq\frac{1}{2}M_{2}f_{\varepsilon}^{\mathrm{in}}+\frac{1}{2}\|u_{\varepsilon}^{\mathrm{in}}\|_{L^{2}(\mathbb{T}^{3})}^{2}+o(\varepsilon).\end{aligned}$$
		Let $\varepsilon\to 0$ in the above equation and use the strategy of the Lemma 2 in \cite{ref29} again, we can obtain the desired result.
	\end{proof}
	It remains to extend the local solutions obtained in the above to the global ones. However, 
	this can be done by employing the same method used in Section 3.6 of \cite{ref29} together with Lemma \ref{lem4.9}.
	\section{Conclusion}
	In this article, we propose a coupled model of the relativistic Cucker-Smale equation and the incompressible Navier-Stokes equation based on the theory of friction. We also analyze its asymptotic stability and the existence of weak solutions. According to our knowledge, there have been no previous studies on this coupled model.
	
	Firstly, we have proved that the smooth solution of the RCS-NS equation exhibits exponential stability under certain conditions. Physically, this means that the particle velocity will converge with the fluid's velocity. In addition, we also proposed a weak solution form for the RCS-NS equation and proved its existence under conditions of large initial values. This provides a potential theoretical basis for designing its numerical algorithms.
	
	In the future, we will continue to investigate the existence and uniqueness of strong solutions in the periodic region of the RCS-NS model, as well as analyze the behavior of solutions in the whole domain $\mathbb {R}^n $. In addition, further exploration is needed on the coupling structure between RCS equation and compressible Navier-Stokes equations, relativistic Euler equations, Stokes equation and other fluid equations.
	
	{\bf Acknowledgments.}  Weiyuan Zou is supported by the National Natural Science Foundation of China (NSFC)12001033.
	\vskip 0.3cm {\bf Conflict of Interest}\quad The authors declare no
	conflict of interest.


\begin{thebibliography}{99} 
		\bibitem{ref1} J. Toner, Y. Tu. Flocks, herds, and schools: A quantitative theory of flocking[J]. Physical Review E, 1998, 58(4): 4828-4858.
		
		\bibitem{ref2} C. M. Topaz, A. L. Bertozzi. Swarming patterns in a two-dimensional kinematic model for biological groups[J]. SIAM Journal on Applied Mathematics, 2004, 65: 152–174. 
		
		\bibitem{ref3} A. T. Winfree. Biological rhythms and the behavior of populations of coupled oscillators[J]. Journal of Theoretical Biology, 1967, 16(1):15-42. 
		
		\bibitem{ref4} J. Buck, E. Buck. Biology of Synchronous Flashing of Fireflies[J]. Nature, 1966, 211(5049): 562-564.
		
		\bibitem{ref5} S. Y. Ha, J. Jeong, S.E. Noh, et al. Emergent dynamics of Cucker-Smale flocking particles in a random environment[J]. Journal of Differential Equations, 2017, 262(3): 2554-2591. 
		
		\bibitem{ref6} F. Cucker, S. Smale. Emergent Behavior in Flocks[J]. IEEE Transactions on Automatic Control, 2007, 52(5): 852-862.
		
		\bibitem{ref7} D. Bhaya. Light matters: phototaxis and signal transduction in unicellular cyanobacteria[J]. Molecular Microbiology, 2010, 53(3): 745-754. 
		
		\bibitem{ref8} A. Jakob, N. Schuergers, A. Wilde. Phototaxis assays of synechocystis sp PCC 6803 at macroscopic and microscopic scales[J]. Bio-protocol, 2017, 7(11): e2328.
		
		\bibitem{ref9} S. Y. Ha, T. Ruggeri, Emergent dynamics of a thermodynamically consistent particle model[J]. Archive for Rational Mechanics and Analysis, 2017, 223: 1397-1425.
		
		\bibitem{ref10} S. Y. Ha, J. Kim, T. Ruggeri, From the relativistic mixture of gases to the relativistic Cucker-Smale flocking[J]. Archive for Rational Mechanics and Analysis, 2020, 235: 1661-1706.
		
		\bibitem{ref11} H. Ahn, S.-Y. Ha, J. Kim, Uniform stability of the relativistic Cucker-Smale model and its application to a mean-field limit[J]. Communications on Pure and Applied Analysis, 2021, 20: 4209-4237.
		
		\bibitem{ref12} J. Byeon, S. Y. Ha, J. Kim. Asymptotic flocking dynamics of a relativistic Cucker-Smale flock under singular communications[J]. Journal of Mathematical Physics, 2022, 63: 1-17.
		
		\bibitem{ref13} H. Ahn. Asymptotic flocking of the relativistic Cucker–Smale model with time delay[J]. Networks and Heterogeneous Media, 2023, 18(1): 29-47.
		
		\bibitem{ref14} C. Baranger, L. Boudin, P.E Jabin, S. Mancini, A modeling of biospray for the upper airways[J]. Mathematics and applications to biology and medicine, 2005, 14: 41-47.
		
		\bibitem{ref15} L. Boudin, L. Desvillettes, R. Motte. A Modeling of Compressible Droplets in a Fluid[J]. Communications in mathematical sciences, 2003, 1(4): 657-669.
		
		\bibitem{ref16} R. Cafisch, G. C. Papanicolaou. Dynamic theory of suspensions with Brownian effects[J]. Siam Journal on Applied Mathematics, 1983, 43: 885-906.
		
		\bibitem{ref17} L. Desvillettes, F. Golse, V. Ricci. The mean-field limit for solid particles in a Navier-Stokes fow[J]. Journal of Statistical Physics, 2008, 131: 941-967.
		
		\bibitem{ref18} P. E. Jabin, B. Perthame. Notes on mathematical problems on the dynamics of dispersed particles interacting through a fluid[J]. Boston, Birkhauser, 2000: 111-147.
		
		\bibitem{ref19} P. J. O'Rourke. Collective Drop Effects on Vaporizing Liquid Sprays[D]. Princeton University, 1981.
		
		\bibitem{ref20} F. A. Williams. Spray combustion and atomization[J]. Phys fluids, 1958, 1: 541-545.
		
		\bibitem{ref21} Combustion theory[M]. Addison-Wesley,1965.
		
		\bibitem{ref22} A. A. Amsden, P. J. Orourke, T. D. Butler. KIVA-2: A computer program for chemically reactive flows with sprays[J]. Nasa Sti/recon Technical Report N, 1989.
		
		\bibitem{ref23} C. Baranger, L. Boudin, P. E. Jabin, et al. A modeling of biospray for the upper airways[J]. Esaim Proceedings, 2005, 14: 41-47.
		
		\bibitem{ref24} S. Berres, R. Biuirger. K. H. Karlsen, et al. Strongly degenerate parabolic-hyperbolic systems modeling polydisperse sedimentation with compression[J]. Siam Journal On Applied Mathematics, 2003, 64: 41-80.
		
		\bibitem{ref25} S. Berres, R. Biuirger, E. M. Tory. Mathematical model and numerical simulation of the liquid fluidization of polydisperse solid particle mixtures[J]. Computing and Visualization in Science, 2004, 6(2/3): 67-74.
		
		\bibitem{ref26} N. Bellomo, S. Y, Ha, A quest toward a mathematical theory of the dynamics of swarms[J]. Mathematical models and methods in applied sciences, 2017, 27: 745-770.
		
		\bibitem{ref27} F. A. Williams. Combustion Theory, The Fundamental Theory of Chemically Reacting Flow Systems. Addison-Wesley series in engineering science, 1965.
		
		\bibitem{ref28} H. Bae, Y. P. Choi, S. Y. Ha, et al. Time-asymptotic interaction of flocking particles and an incompressible viscous fluid[J]. Nonlinearity, 2012, 24: 1155-1177.
		
		\bibitem{ref29} L. Boudin, L. Desvillettes, C. Grandmont, et al. Global existence of solution for the coupled Vlasov and Navier-Stokes equations[J]. Differential and Integral Equations, 2009, 22(11-12): 1247-1271.
		
		\bibitem{ref30} H, Bae, Y.-P. Choi, S.-Y. Ha, et al. Global existence of strong solution for the Cucker-Smale-Navier-Stokes system[J]. Journal of Differential Equations, 2014, 257: 2225-2255.
		
		\bibitem{ref31} Y. P. Choi, S. Y. Ha, J. Jung, et al. Global dynamics of the thermomechanical Cucker-Smale ensemble immersed in incompressible viscous fluids[J]. Nonlinearity, 2019, 32: 1597-1640.
		
		\bibitem{ref32} H. Ahn, S. Y. Ha, J. Kim. Nonrelativistic limits of the relativistic Cucker–Smale model and its kinetic counterpart[J]. Journal of mathematical physics, 2022, 63: 082701.
		
		\bibitem{ref33} D. O. Hebb. The organization of behavior[J]. MIT Press, 1988.
		
	\end{thebibliography}
\end{document}